\setlist{itemsep=1pt,topsep=3pt,partopsep=0pt,leftmargin=30pt}
\newtheorem{thm}{Theorem}[section]
\newtheorem{cor}[thm]{Corollary}
\newtheorem{lem}[thm]{Lemma}
\newtheorem{prop}[thm]{Proposition}
\newtheorem{qst}{Question}
\theoremstyle{definition}
\newtheorem{defn}[thm]{Definition}
\theoremstyle{remark}
\newtheorem{rem}[thm]{Remark}
\newcommand{\defin}[1]{\emph{#1}}               
\newcommand{\comment}[1]{{}}                    
\newcommand{\ie}{i.e.,~}
\newcommand{\eg}{e.g.,~}
\newcommand\NN{\mathbb{N}}
\newcommand\ZZ{\mathbb{Z}}
\newcommand\RR{\mathbb{R}}
\newcommand\GG{\mathcal{G}}
\newcommand\FF{\mathcal{F}}
\newcommand\HH{\mathcal{H}}
\newcommand\St{\mathcal{S}}
\newcommand\BB{\mathcal{B}}
\newcommand{\Imp}{\ \Rightarrow \ }              
\newcommand{\nImp}{\ \nRightarrow \ }            
\newcommand{\Biimp}{\ \Leftrightarrow \ }        
\newcommand{\propsty}[1]{\mathsf{#1}}
\newcommand{\propi}{\propsty{P}}
\newcommand{\fg}{\propsty{fg}}                         
\newcommand{\fp}{\propsty{fp}}                         
\newcommand{\der}{\propsty{d}}                         
\newcommand{\by}{\operatorname{-by-}}
\newcommand{\isom}{\simeq}     
\newcommand{\trivial}{1}                              
\newcommand{\normaleq}{ \trianglelefteqslant }        
\newcommand{\Pres}[2]{\left\langle  #1 \, \left| \ #2 \, \right. \hspace{-2pt} \right\rangle} 
\newcommand{\pres}[2]{\langle  \, #1 \! \mid \! #2 \, \rangle} 
\newcommand{\gen}[1]{\langle  #1 \rangle}  
\newcommand{\normalcl}[1]{\left \langle \! \left \langle #1 \right \rangle \! \right \rangle} 
\newcommand{\Betti}[1]{\operatorname{b}\!\left(#1\right)} 
\newcommand{\betti}[1]{\operatorname{b}(#1)} 
\DeclareMathOperator*{\Freeprod}{\scalebox{2}{\raisebox{-0.3ex}{$\ast\hspace{-0.3pt}$}}}  
\newcommand{\freeprod}{*} 
\newcommand{\rk}{\operatorname{rk}}
\newcommand{\ab}{^{\operatorname{ab}}}  
\newcommand{\abf}{^{\operatorname{ab}}_{*}}  
\newcommand{\piab}{\pi^{\operatorname{ab}}}  
\newcommand{\piabf}{\pi^{\operatorname{ab}}_{*}} 
\newcommand{\onto}{\twoheadrightarrow}
\newcommand{\into}{\hookrightarrow}
\newcommand{\im}{\operatorname{im}}                 
\newcommand{\id}{\operatorname{id}}                 
\newcommand{\Hom}{\operatorname{Hom}}               
\newcommand{\Aut}{\operatorname{Aut}}               
\newcommand{\End}{\operatorname{End}}               
\newcommand{\GL}{\operatorname{GL}}                 
\newcommand{\Inn}{\operatorname{Inn}}               
\newcommand{\Out}{\operatorname{Out}}               
\newcommand{\CP}{\operatorname{CP}}
\newcommand{\SCP}{\textstyle\frac{1}{2}\!\operatorname{CP}}
\newcommand{\DCP}[1]{\operatorname{CP}_{\!\scriptscriptstyle{#1}}}
\newcommand{\DSCP}[1]{\textstyle\frac{1}{2}\!\operatorname{CP}_{\!\scriptscriptstyle{#1}}}
\newcommand{\IP}{\operatorname{IP}}
\newcommand{\MP}{\operatorname{MP}}
\newcommand{\EP}{\operatorname{EP}}
\newcommand{\BS}{\operatorname{BS}}
\newcommand{\YES}{\texttt{YES}}
\newcommand{\NO}{\texttt{NO}}
\newcommand{\bns}[1]{\Sigma(#1)}                    
\newcommand{\sph}[1]{\operatorname{S}(#1)}          
\newcommand{\fpa}[1]{[#1]_{\fp}}   
\newcommand{\fga}[1]{[#1]_{\fg}}   
\begin{document}

\title[]{Algorithmic recognition of infinite cyclic extensions}

\author{
	Bren Cavallo
}
\address{Department of Mathematics, CUNY Graduate Center,  City University of New York.}
\email{bcavallo@gradcenter.cuny.edu}

\author{
	Jordi Delgado 
}
\address{Departament de Matemàtiques, Universitat Politècnica de Catalunya.}
\email{jorge.delgado@upc.edu}

\author{
	Delaram Kahrobaei
}

\address{CUNY  Graduate  Center,  PhD  Program  in  Computer  Science
and NYCCT, Mathematics Department, City University of New York.}
\email{dkahrobaei@gc.cuny.edu}

\author{
	Enric Ventura
}
\address{Departament de Matemàtiques, Universitat Politècnica de Catalunya.}
\email{enric.ventura@upc.edu}%

\date{\today}

\keywords{extension, cyclic extension, decision problem, BNS invariant, undecidability}

\subjclass[2010]{20E22, 20F10}

\begin{abstract}
	We prove that one cannot algorithmically decide whether a finitely presented $\ZZ$-extension admits a finitely generated base group, and we use this fact to prove the undecidability of the BNS invariant. Furthermore, we show the equivalence between the isomorphism problem within the subclass of unique $\ZZ$-extensions, and the semi-conjugacy problem for deranged outer automorphisms.
\end{abstract}

\maketitle

In the present paper, we study algorithmic problems about recognition of certain algebraic properties among some families of group extensions.
Indeed, we see that yet for the relatively easy family of $\ZZ$-extensions
one can find positive and negative results, \ie both solvable and unsolvable ``recognition problems''.

For example, we prove that one cannot algorithmically decide whether a finitely presented $\ZZ$-extension admits a finitely generated base group. Even when the extension has a unique possible base group, it is not decidable in general whether this particular base group is finitely generated or not. As a consequence, we prove general undecidability for the Bieri--Neumann--Strebel invariant: there is no algorithm which, on input a finite presentation of a group $G$ and a character $\chi \colon G\to \RR$, decides whether $[\chi]$ belongs to the BNS invariant of $G$, $[\chi]\in \bns{G}$, or not.
Although this result seems quite natural,
since this geometric invariant has long been agreed to be hard to compute in general (see for example~\cite{meier_bieri-neumann-strebel_1995,
	papadima_bierineumannstrebelrenz_2010, koban_bns-invariant_2015, koban_bierineumannstrebel_2014}), as far as we know, its undecidability does not seem to be contained in the literature. Following our study of recognition properties, we finally consider the isomorphism problem in certain classes of unique $\ZZ$-extensions, and prove that it is equivalent to the semi-conjugacy problem for the corresponding deranged outer automorphisms (see details in \cref{sec:isom}).

\medskip
The structure of the paper is as follows. In \cref{recog} we state the recognition problems we are interested in. In \cref{sec:extensions} we introduce
the most general framework for our study:
(finitely presented) $\ZZ^r$-extensions (denoted $* \by \ZZ^r$), unique $\ZZ^r$-extensions (denoted $! \by \ZZ^r$), as well as the subfamily of $\fg \by \ZZ^r$ groups, and will investigate the above problems for them. In Sections~\ref{sec:Z-extensions} and~\ref{unique} we focus on the case $r=1$ (\ie infinite cyclic extensions) which will be the main target of the paper. The central result in Section~\ref{sec:undecidability} is \cref{thm:fg-by-Z is undecidable}, showing that the membership problem for $\fg \by \ZZ$ (among other similar families) is undecidable, even within the class $! \by \ZZ$. As an application, \cref{sec:BNS} contains the undecidability of the BNS invariant (\cref{thm:BNS is not decidable for Betti 1}). In \cref{sec:searching standard presentations} we search for ``standard presentations'' of $\fg \by \ZZ$ groups (\cref{prop:enumerate standard presentations of fg-by-Z}). Finally, in \cref{sec:isom} we characterize the isomorphism problem in the subclass of unique $\ZZ$-extensions by means of the so-called semi-conjugacy problem (a weakened version of the standard conjugacy problem) for deranged outer automorphisms (\cref{thm:main-iso}).

\section{Algorithmic recognition of groups}\label{recog}

Algorithmic behavior of groups has been a very fundamental concern in Combinatorial and
Geometric Group Theory since the very beginning of this branch of Mathematics in the
early 1900's. The famous three problems stated by Max Dehn in 1911 are prototypical examples of
this fact: the Word,  Conjugacy, and  Isomorphism problems have been very influential
in the literature along these last hundred years. Today, these problems (together with
a great and growing collection of variations) are the center of what is known as
Algorithmic Group Theory.

Dehn's Isomorphism Problem is probably the paradigmatic example of what is popularly
understood as ``algorithmic recognition of groups". Namely, let $\GG_\fp$ be the family
of finite presentations of groups. Then (with the usual abuse of notation
of
denoting in the same
way a presentation and the presented group):
\begin{itemize}
	\item \emph{Isomorphism problem} [\,$\IP$\,]: given two finite presentations $G_1,
	      G_2\in \GG_\fp$, decide whether they present isomorphic groups, $G_1 \simeq G_2$,
	      or not.
\end{itemize}

It is well known that, in this full generality, Dehn's Isomorphism Problem is
unsolvable, see for example~\cite{miller_iii_decision_1992}. So, a natural next step is to
study what happens when we restrict the inputs to a certain subfamily $\HH\subseteq
\GG_\fp$:
\begin{itemize}
	\item \emph{Isomorphism problem within $\HH$} [\,$\IP(\HH)\,]$: given two finite
	      presentations $H_1, H_2\in \HH$, decide whether they present isomorphic groups,
	      $H_1\simeq H_2$, or not.
\end{itemize}
Since we are interested in groups, we will only consider families of presentations
closed by isomorphism; in this way, the problems considered are actually about groups
(although represented by finite presentations). The literature is full of results
solving the isomorphism problem for more and more such subfamilies $\HH$ of $\GG_\fp$,
or showing its unsolvability even when restricted to smaller and smaller subfamilies
$\HH$.

\medskip

Another recognition aspect is that of deciding whether a given group satisfies certain
property, \ie whether it belongs to a certain previously defined family. For two
arbitrary subfamilies $\HH, \GG\subseteq \GG_\fp$, we define the:
\begin{itemize}
	\item \emph{(Family) Membership problem for $\HH$ within $\GG$}
	      [\,$\MP_{\GG}(\HH)$\,]: given a finite presentation~${G\in \GG}$, decide whether
	      $G \in \HH$ or not.
\end{itemize}
If $\HH \subseteq \GG$ and $\MP_{\GG}(\HH)$ is decidable we will also say that the
inclusion $\HH \subseteq \GG$ is decidable. When the considered ambient family is the
whole family of finitely presented groups (\ie $\GG = \GG_{\fp}$) we will usually omit
any reference to it and simply talk about the membership problem for $\HH$, denoted
$\MP(\HH)$.

A classic undecidability result due to
Adian~\cite{adian_unsolvability_1957,adian_finitely_1957} and
Rabin~\cite{rabin_recursive_1958} (see also~\cite{miller_iii_decision_1992}) falls into
this scheme. Namely, when $\HH$ is a \emph{Markov} subfamily (\ie a nonempty subfamily
$\emptyset \neq \HH \subseteq \GG_\fp$ such that the subgroups of groups in $\HH$ do
not completely cover~$\GG_\fp$), then $\MP(\HH)$ is not decidable. This turns out to
include the impossibility of deciding membership for countless well known families of
finitely presented groups~(\eg trivial, finite, abelian, nilpotent, solvable, free,
torsion-free, simple, automatic, etc).

Note that $\MP(\HH)$ being decidable is the same as saying that $\HH$ is a recursive
set of finite presentations. And, even when $\MP(\HH)$ is not decidable, we can still
ask for a recursive enumeration of the elements in $\HH$:
\begin{itemize}
	\item \emph{(Family) Enumeration problem for $\HH$} [\,$\EP(\HH)$\,]: enumerate all
	      the elements in $\HH$.
\end{itemize}

In many cases the considered subfamily $\HH\subseteq \GG_\fp$ entails a concept of
``good" or ``standard" presentation $\St \subseteq \HH$, for the groups presented. For
example, if $\HH$ is the family of (finite presentations for) braid groups $\{ B_n \mid
n\geqslant 2\}$, we can define the set of \emph{standard} presentations $\St \subseteq
\HH$ to be those of the form
\begin{equation*}
	\Pres{\sigma_1,\ldots,\sigma_{n-1}}{
		\begin{array}{cc}
			\sigma_i \sigma_j = \sigma_j \sigma_i,                   & |i-j|>1 \\
			\sigma_i \sigma_j \sigma_i = \sigma_j \sigma_i \sigma_j, & |i-j|=1
		\end{array}
		}\, ;
\end{equation*}
in this case, the family enumeration problem for $\St$ consists, on input an arbitrary
finite presentation presenting a braid group, to compute its (unique) standard one for it, \ie to recognize the number of strands $n$.

\section{Group extensions}\label{sec:extensions}

Let $G$ and $Q$ be arbitrary groups. We say that $G$ is a \defin{group extension by
	$Q$} (or a \defin{$Q$-extension}) if $G$ can be homomorphically mapped onto $Q$, \ie if
there exists a normal subgroup $H\normaleq G$ such that the quotient $G/H$ is
isomorphic to $Q$. Of course, this situation gives rise to the short exact sequence
\begin{equation*}
	1 \to H\to G\to Q \to 1
\end{equation*}
for some group $H$, and we will
say that $G$ is \emph{$* \by Q$}. One
also says that such $H$ is a \defin{base group} for the extension, and that $G$ is an
\defin{extension of $H$ by~$Q$}; so, if we want to specify the base group we will say
that $G$ is~\defin{$H \by Q$}.

We remark that a given group extension by $Q$ may admit many, even non isomorphic,
different base groups (see~\cref{cor:nonisomorphic base groups}).

If $H$ is the only (as subset) normal subgroup of $G$ with quotient $G/H$ isomorphic
to~$Q$, then we say that the $Q$-extension is \defin{unique}, or that $G$ is a
\defin{unique extension by~$Q$}; in the same vein as before, we will
say that $G$ is \defin{$! \by Q$} (or that $G$~is~\defin{$!H \by Q$}, if we want to specify
who is the unique normal subgroup).

It will be convenient to extend this notation allowing to replace the groups $H$ and
$Q$ by any group property (which we will usually write in $\mathsf{sans}$ typeface).
Concretely, given two properties of groups, $\propi_1$, $\propi_2$, we say that a group
$G$ is \defin{$\propi_1 \by \propi_2$} (resp., \emph{$!\propi_1 \by \propi_2$}) if it is
$H \by Q$ (resp., $!H \by Q$) for certain groups $H$ satisfying $\propi_1$, and $Q$
satisfying $\propi_2$. In this way we can easily refer to families of group extensions
in terms of the behavior of their base and quotient groups. So, for example, a group
$G$ is \emph{$\fg \by \ZZ^r$} if it is $H \by \ZZ^r$ for some finitely generated group
$H$. And it is \emph{$!\fg \by \ZZ^r$} if it is $!H \by \ZZ^r$ for some finitely
generated group $H$; \ie if it has a unique normal subgroup with quotient isomorphic to
$\ZZ^r$, which happens to be finitely generated (not to be confused with $G$ having a
unique finitely generated normal subgroup whose quotient is isomorphic to $\ZZ^r$---and possibly some others infinitely generated as well).

When we need to add extra assumptions (\ie satisfying some property $\propi$) on the
elements of certain family $\GG$, we will denote the new family $[\GG]_{\propi}$.
For example, $[\propsty{abelian}]_{\fg}$ denotes the family of finitely generated
abelian groups, while $\fpa{*\by\ZZ}$ denotes the family of finitely presented
extensions by $\ZZ$.

%
%
%

\medskip

Recognizing $\ZZ^r$-extensions (more concretely, solving the membership problem for the
families $* \by \ZZ^r$ and $! \by \ZZ^r$) turns out to be very easy. Recall that, for a
finitely generated group $G$, its abelianization is always of the form $G\ab =\ZZ^r
\oplus T$, where $r$ is a nonnegative integer and $T$ is a finite abelian group (both
canonically determined by $G$ and called, respectively, the \defin{(first) Betti
	number} of $G$, denoted~$r = \betti{G}$, and the \defin{abelian torsion} of $G$).

Clearly, for a finitely generated group $G$, $\betti{G}$ is the maximum rank for a
free-abelian quotient of $G$. Hence, we have the following straightforward
characterizations.

\begin{lem}\label{lem:Betti1 and rank of a free-abelian quotient}
	Let $G$ be a finitely generated group, and $k$ a nonnegative integer. Then,
	\begin{enumerate}[(i)]
		\item $G$ is $* \by \ZZ^k$ if and only if $\betti{G}\geq k$;
		      \item\label{item:Betti1 = max rank of a free-abelian quotient} $G$ is $! \by \ZZ^k$
		      if and only if $\betti{G}=k$. \qed
	\end{enumerate}
\end{lem}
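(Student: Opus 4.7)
The plan is to reduce everything to linear algebra over $\ZZ$ by exploiting the universal property of abelianization. The crucial observation is that any homomorphism from $G$ to an abelian group factors through $G\ab = \ZZ^r \oplus T$, where $r = \betti{G}$ and $T$ is the torsion part. In particular, normal subgroups $H \normaleq G$ with $G/H \cong \ZZ^k$ correspond bijectively (via $H \mapsto H/[G,G]$) to subgroups $M \leqslant G\ab$ with $G\ab/M \cong \ZZ^k$. Thus the problem reduces to counting such subgroups $M$ of $\ZZ^r \oplus T$.

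For part (i), if $G$ is $* \by \ZZ^k$ then there exists $M \leqslant \ZZ^r \oplus T$ with quotient $\ZZ^k$; since $\ZZ^k$ is torsion-free, $M$ must contain $T$, so we get a surjection $\ZZ^r \onto \ZZ^k$, forcing $r \geqslant k$. Conversely, if $r \geqslant k$, composing the canonical map $G \onto G\ab \onto \ZZ^r$ with projection onto the first $k$ coordinates yields a surjection $G \onto \ZZ^k$, whose kernel provides a base group.

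For part (ii), one direction is immediate: if $\betti{G} < k$, then by (i) $G$ is not even $* \by \ZZ^k$. If $\betti{G} > k$, it is easy to produce two distinct subgroups $M_1, M_2 \leqslant \ZZ^r$ with $\ZZ^r/M_i \cong \ZZ^k$ (for instance, the kernels of projection onto the first $k$ and onto the last $k$ coordinates), giving two distinct normal subgroups in $G$. The interesting case is $\betti{G}=k$: any subgroup $M \leqslant \ZZ^k \oplus T$ with $(\ZZ^k \oplus T)/M \cong \ZZ^k$ must contain $T$ (torsion-free quotient), so $M = T \oplus M'$ with $M' \leqslant \ZZ^k$ and $\ZZ^k / M' \cong \ZZ^k$; comparing ranks forces $M' = 0$, so $M = T$ is the unique candidate.

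There is no real obstacle here; the whole argument is a routine translation between normal subgroups of $G$ with abelian quotient and subgroups of $G\ab$, followed by an elementary rank computation in $\ZZ^r \oplus T$. The only point that requires a moment's care is recognizing that, in the uniqueness argument, a surjective endomorphism of $\ZZ^k$ must be injective (so $M' = 0$), which is the standard Hopfian property of finitely generated free abelian groups.
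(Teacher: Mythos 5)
Your proof is correct and follows essentially the route the paper leaves implicit (the lemma is stated without proof, resting only on the remark that $\betti{G}$ is the maximum rank of a free-abelian quotient of $G$): you pass to $G\ab=\ZZ^{r}\oplus T$ via the lattice correspondence for normal subgroups containing $[G,G]$, and then carry out the elementary rank count, including the uniqueness analysis for $r=k$. Nothing to correct.
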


Note that one can easily compute the Betti number of any group given by a finite
presentation: just abelianize it (\ie add as relators the commutators of any pair of
generators in the presentation) and then apply the Classification Theorem for finitely
generated abelian groups, which is clearly algorithmic. Thus, Lemma~\ref{lem:Betti1 and
rank of a free-abelian quotient} immediately implies the decidability of the membership
problem for these families of groups.

\begin{cor}\label{cor:MP(*-by-Z) MP(!-by-Z) decidable}
	For every $k\geqslant 0$, the membership problem for the families $* \by \ZZ^k$ and $!
	\by \ZZ^k$ is decidable; \ie there exists an algorithm which takes any finite
	presentation as input and decides whether the presented group is~$* \by \ZZ^k$
	(resp.,~$! \by \ZZ^k$) or not. \qed
\end{cor}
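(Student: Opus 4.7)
The plan is to reduce the membership question for both families to a single computation---namely, that of the first Betti number $\betti{G}$ of the group presented by the input---and then to invoke \cref{lem:Betti1 and rank of a free-abelian quotient}. Given a finite presentation $\pres{x_1,\ldots,x_n}{R}$ for $G$, I would first build a presentation of the abelianization $G\ab$ by appending to $R$ all commutators $[x_i,x_j]$ with $1 \leq i < j \leq n$; this produces, in the obvious way, a finite presentation of the finitely generated abelian group $G\ab$ as a quotient of $\ZZ^n$ by finitely many relations.

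The second step is to read this augmented presentation as an integer matrix $M$, with columns indexed by the generators, rows indexed by the relators, and entries recording the exponent sums of each generator in each relator, and then compute its Smith normal form. This procedure is classical and fully algorithmic over $\ZZ$; it produces a decomposition $G\ab \isom \ZZ^r \oplus T$ with $T$ a finite abelian group, and in particular outputs the integer $r = \betti{G}$.

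The third step is then a numerical comparison: by \cref{lem:Betti1 and rank of a free-abelian quotient}, $G$ is $* \by \ZZ^k$ precisely when $\betti{G} \geq k$, and $G$ is $! \by \ZZ^k$ precisely when $\betti{G} = k$. Both comparisons are trivial once $r$ has been extracted, so both membership questions are decided simultaneously.

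In fact there is no real obstacle here: the only nontrivial ingredient is the effective form of the structure theorem for finitely generated abelian groups (equivalently, Smith normal form over $\ZZ$), which is classical, and the corollary follows essentially immediately from the preceding lemma.
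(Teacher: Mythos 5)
Your proof is correct and follows exactly the route the paper takes: abelianize the given finite presentation by adjoining commutators, extract $\betti{G}$ via the (algorithmic) classification of finitely generated abelian groups (Smith normal form), and conclude by the numerical criteria of \cref{lem:Betti1 and rank of a free-abelian quotient}. Nothing is missing.
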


Let us denote by $\piab \colon G\twoheadrightarrow G\ab$ the abelianization map, by
$\piabf \colon G\twoheadrightarrow \ZZ^{\betti{G}}$ the abelianization map followed by
the canonical projection onto the free-abelian part~$\ZZ^{\betti{G}}$, and let us also
write $G\abf = \piabf(G)$. We collect here some elementary properties of the first
Betti number which will be useful later.

\begin{lem}\label{lem:Betti1 properties}
	Let $G$ be a finitely generated group. Then,
	\begin{enumerate}[(i)]
		\item $\betti{G}=\betti{G\ab}=\betti{G\abf}=\rk{G \abf}\leq \rk(G\ab)$, with equality
		      if and only if $G\ab$ is free-abelian;
		\item for every subgroup $H\leqslant G\ab$, $\betti{G\ab/H}=\betti{G\ab}- \betti{H}$;
		\item if $G\ab =H_1 \oplus \cdots \oplus H_k$, then $\betti{G\ab}=\betti{H_1}+\cdots
		      +\betti{H_k}$. \qed
	\end{enumerate}
\end{lem}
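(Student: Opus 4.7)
The plan is to deduce all three statements directly from the structure theorem for finitely generated abelian groups together with the flatness of $\QQ$ over $\ZZ$. The unifying observation I would use throughout is that, for a finitely generated abelian group $A$, the Betti number $\betti{A}$ coincides both with the rank of its free-abelian part in the decomposition $A\isom \ZZ^r \oplus T$ and with $\dim_\QQ(A \otimes_\ZZ \QQ)$.

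For part (i), I would start from the decomposition $G\ab \isom \ZZ^r \oplus T$ (with $T$ finite abelian) that was recalled right before the statement, so that $\betti{G}=r$ by definition. Since $G\ab$ is already abelian, its abelianization is itself, giving $\betti{G\ab}=r$; and since $\piabf$ is, by construction, the composition of $\piab$ with the canonical projection onto the free-abelian summand, we get $G\abf \isom \ZZ^r$ and hence $\betti{G\abf}=\rk(G\abf)=r$. For the inequality, invoke the structure theorem to write $T\isom \ZZ/d_1\oplus\cdots\oplus\ZZ/d_s$; then the minimal number of generators of $G\ab$ is $r+s$, so $\rk(G\ab)\geq r$ with equality if and only if $s=0$, i.e., if and only if $G\ab$ is free-abelian.

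For part (ii), I would tensor the short exact sequence $0 \to H \to G\ab \to G\ab/H \to 0$ with $\QQ$ over $\ZZ$; flatness of $\QQ$ preserves exactness, and dimensions of $\QQ$-vector spaces are additive on short exact sequences, so $\dim_\QQ(G\ab/H\otimes\QQ)=\dim_\QQ(G\ab\otimes\QQ)-\dim_\QQ(H\otimes\QQ)$, which is precisely the claim. (Note that $H$ is automatically finitely generated, being a subgroup of a finitely generated abelian group.) Part (iii) then follows at once: decomposing each summand as $H_i\isom \ZZ^{r_i}\oplus T_i$ gives $G\ab\isom \ZZ^{r_1+\cdots+r_k}\oplus(T_1\oplus\cdots\oplus T_k)$, so $\betti{G\ab}=r_1+\cdots+r_k=\betti{H_1}+\cdots+\betti{H_k}$; alternatively, iterate (ii), or use that $\dim_\QQ$ is additive on direct sums.

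There is no real obstacle: the statement is a compendium of elementary facts assembled for later reference. The only mild care required is in part (ii), to be explicit that the Betti number of a finitely generated abelian group is an \emph{additive} invariant on short exact sequences (a fact that fails for the minimal number of generators $\rk$), which is what makes the tensoring-with-$\QQ$ argument clean.
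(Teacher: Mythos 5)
Your proof is correct. Note that the paper states this lemma with no proof at all (it is closed with a \qed as a collection of elementary facts), so there is nothing to diverge from; your argument---the structure theorem for finitely generated abelian groups for (i), and exactness of $-\otimes_{\ZZ}\QQ$ together with additivity of $\dim_\QQ$ for (ii) and (iii)---is a standard and complete way to fill in exactly what the authors leave to the reader, including the correct reading of $\rk$ as the minimal number of generators in the inequality of part (i).
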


In addition, note that the kernels of both $\piab$ and $\piabf$ are fully characteristic
subgroups of~$G$~(\ie invariant under endomorphisms of $G$). Hence, every $\phi \in
\End(G)$ (resp., every $\phi \in \Aut (G)$) canonically determines endomorphisms
$\phi\ab \in \End(G\ab )$ and ${\phi\abf \in \End(G\abf )}$ (resp., automorphisms
$\phi\ab \in \Aut(G\ab )$ and $\phi\abf \in \Aut(G\abf )$). Of course, after choosing
an abelian basis for $\ZZ^r$, where $r=\betti{G}$, $\phi\abf$ can be thought of as an
$r\times r$ square matrix over the integers (with determinant $\pm 1$ if $\phi$ is an
automorphism of $G$). In the following section we will relate certain properties of a
$\ZZ$-extension with properties of its defining automorphism $\phi$.

\section{$\ZZ$-extensions}\label{sec:Z-extensions}

We will concentrate now on infinite cyclic extensions, concretely in the family $* \by
\ZZ$ and its subfamily $! \by \ZZ$. Let us describe them in a different way: since
$\ZZ$ is a free group, every short exact sequence of the form $1\to H\to G\to \ZZ\to 1$
splits, and so $G$ is a semidirect product of $H$ by $\ZZ$; namely $G\simeq H\rtimes_{\alpha}
\ZZ$, for some $\alpha \in \Aut(H)$. Let us recall this well known construction in
order to fix our notation.

Given an arbitrary group $H$ and an automorphism $\alpha \in \Aut (H)$, define the
\emph{semidirect product of $H$ by $\ZZ$ determined by $\alpha$} as the group
$ H\rtimes_{\alpha} \ZZ$ with underlying set $H\times \ZZ$ and
operation given by
\begin{equation}\label{eq:semidirect product}
	(h,m) \cdot (k,n) = (h \, \alpha^{m}(k) ,m + n),
\end{equation}
for all $h,k \in H$, and $m,n \in \ZZ$. Of course, $h\mapsto (h,0)$ is a natural
embedding of $H$ in $H\rtimes_{\alpha}\ZZ$, and we then have the natural short exact
sequence
\begin{equation}
	1\to H\to H\rtimes_{\alpha}\ZZ \to \ZZ \to 1.
\end{equation}
Therefore, $H\rtimes_{\alpha}\ZZ$ belongs to the family $* \by \ZZ$. Recall that we can
have $H\rtimes_{\alpha} \ZZ \isom K\rtimes_{\beta} \ZZ$, with $H=K$ but $\alpha\neq
\beta$; or even with $H\not\simeq K$. We discuss this phenomena in
Section~\ref{sec:isom} (see~\cref{lem:suf cond for isomorphic Z-extensions} and
\cref{cor:nonisomorphic base groups}, respectively).

In the opposite direction, assume that $G$ is in the family $* \by \ZZ$. Choose a homomorphism
onto $\ZZ$, say $\rho \colon G \twoheadrightarrow \ZZ$, and consider the short exact
sequence given by
\begin{equation*}
	1\to H\to G\stackrel{\rho}{\twoheadrightarrow} \ZZ\to 1,
\end{equation*}
where $H=\ker\rho \normaleq G$. Choose and denote by $t$ a preimage in $G$ of any of the
two generators of $\ZZ$ (note that choosing such $t$ is equivalent to choosing a split
homomorphism for $\rho$). Now consider the conjugation by $t$ in $G$, say $\gamma_t \colon
G\to G$, $g\mapsto tgt^{-1}$, and denote by $\alpha \in \Aut (H)$ its restriction to
$H$ (note that $\gamma_t$ is an inner automorphism of $G$, but $\alpha$ may very well
not be inner as an automorphism of $H$). By construction, we have
\begin{equation}\label{eq:semidirect jumping}
	th =\alpha(h)t,
\end{equation}
for every $h\in H$. At this point, it is clear that every element from $G$ can be
written in a unique way as $ht^k$, for some $h\in H$ and $k\in \ZZ$. And---from
\eqref{eq:semidirect jumping}---the operation in $G$ can be easily understood by
thinking that $t$ (respectively, $t^{-1}$) jumps to the right of elements from $H$ at
the price of applying $\alpha$ (respectively, $\alpha^{-1}$):
\begin{equation}\label{prod}
	h t^{m}\cdot k t^{n} = h \, \alpha^{m}(k) \, t^{m+n}.
\end{equation}
This is, precisely, the multiplicative version of~\eqref{eq:semidirect product}. Hence,
$G\isom H\rtimes_{\alpha} \ZZ$, the semidirect product of $H$ by $\ZZ$ determined by
$\alpha$.

From this discussion it follows easily that, for any presentation of $H$, say
$H=\pres{X}{R}$, and any $\alpha\in \Aut(H)$, the semidirect product $G =
H\rtimes_{\alpha}\ZZ$ admits a presentation of the form
\begin{equation}\label{eq:presentation of a Z-extension}
	\Pres{X,t}{R \, , \, txt^{-1}=\alpha(x) \ (x\in X)}.
\end{equation}
Note that~\eqref{eq:presentation of a Z-extension} is a finite presentation if and only
if the initial presentation for $H$ was finite. So, a group $G$ admits a finite
presentation of type~\eqref{eq:presentation of a Z-extension} if and only if $G$ is
$\fp \by \ZZ$. This provides the notion of standard presentation in this context.

\begin{defn} \label{def:standard presentation}
	A \defin{standard presentation} for a $\fp \by \ZZ$ group $G$ is a \emph{finite}
	presentation of the form \eqref{eq:presentation of a Z-extension}.
\end{defn}

The previous discussion provides the following alternative descriptions for the family
of finitely presented $\ZZ$-extensions. For any group $H$, we have \begin{equation*}
\fpa{H \by \ZZ} =\{ H\rtimes_{\alpha}\ZZ \text{ f.p.}\mid \alpha \in \Aut(H)\},
\end{equation*}
and then,
\begin{align*}\label{eq:descriptions of *-by-Z}
	\notag \fpa{* \by \ZZ} \,
	  & =\,  \{\, G\ \fp\,\mid \betti{G}\geqslant 1 \, \}                                          \\
	  & =\, \{\, H\rtimes_{\alpha}\ZZ \ \fp\,\mid H\text{ group, and } \alpha \in \Aut(H) \,\} \,.
\end{align*}
\begin{rem}
	Note that
	we have made no assumptions on the base
	group $H$. Imposing natural conditions on it, we get the inclusions
	\begin{equation} \label{eq: sequence of inclusions}
		\fp \by \ZZ \, \subseteq \fpa{\fg \by \ZZ} \, \subseteq \, \fpa{*\by\ZZ} \, ,
	\end{equation}
	which will be seen throughout the paper to be both strict.
\end{rem}

The  strictness of the second inclusion in \eqref{eq: sequence of inclusions} is a direct consequence of \cref{cor:K*Z is fg-by-Z iff K=1}, while the strictness of the first one is proved below (we thank Conchita Martínez for pointing out the candidate group \eqref{eq:candidate group} in the subsequent proof).

\begin{prop} \label{prop: fp-by-Z < [fg-by-Z]fp}
	The inclusion $\fp \by \ZZ \, \subseteq \fpa{\fg \by \ZZ}$ is strict. That is, there exist finitely presented $\ZZ$-extensions of finitely generated groups, which are not $\ZZ$-extensions of any finitely presented group.
\end{prop}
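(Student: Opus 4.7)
The plan is to exhibit a specific candidate, namely the finitely presented direct product of two free groups of rank two:
\begin{equation}\label{eq:candidate group}
G \,=\, \Pres{a,b,c,d}{[a,c],\ [a,d],\ [b,c],\ [b,d]} \,\isom\, F(a,b) \times F(c,d).
\end{equation}
The task then splits into two complementary verifications: that $G$ does lie in $\fpa{\fg \by \ZZ}$, but that $G$ does \emph{not} lie in $\fp \by \ZZ$.

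For the positive part, I would take the diagonal character $\chi\colon G\onto\ZZ$ sending each of $a,b,c,d$ to $1$, and invoke Bieri's classical theorem on direct products of free groups (in the two-factor case) to conclude that $\ker\chi$ is finitely generated but not finitely presented. This already places $G$ inside $\fpa{\fg\by\ZZ}$.

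For the negative part, I would run through every surjection $\chi'\colon G\onto\ZZ$ and show its kernel is never finitely presented. Since $G\ab = \ZZ^4$, any such $\chi'$ decomposes as $\chi' = \chi'_1 + \chi'_2$ with $\chi'_i$ the restriction to the $i$-th free factor, and I would split into two cases. If some $\chi'_i$ is trivial---say $\chi'_1=0$---then $\ker\chi' = F(a,b) \times \ker\chi'_2$ is not even finitely generated, since $\ker\chi'_2$ is a free group of infinite rank by Nielsen--Schreier. If instead both $\chi'_1,\chi'_2$ are nontrivial, then Bieri's theorem again rules out finite presentability of $\ker\chi'$. In every case, $\ker\chi'$ is not finitely presented, so $G\notin\fp\by\ZZ$.

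The main obstacle will be the case where both $\chi'_i$ are nontrivial: the non-finite-presentability of $\ker\chi'$ is not elementary, relying on Bieri's homological computation for direct products of free groups (equivalently, on the vanishing of the second Bieri--Neumann--Strebel--Renz invariant of $F(a,b)\times F(c,d)$). Once that external input is invoked, the remainder is a routine decomposition of characters through the product structure of $G$.
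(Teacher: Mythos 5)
Your proposal is correct, but it takes a genuinely different route from the paper's. The paper's witness is a finitely presented metabelian group $G=A\rtimes\ZZ^2$ built on the additive group $A$ of $\ZZ[\tfrac12,\tfrac13,\tfrac15]$; both the finite presentability of $G$ and the failure of finite presentability of every candidate base group are extracted from the Bieri--Strebel criterion for metabelian groups (Theorem~A(ii) of \cite{bieri_valuations_1980}), after an explicit analysis showing that any normal $N$ with $G/N\simeq\ZZ$ must contain $A=G'$ and act on $A$ by multiplication by a fraction $\lambda/\mu$ with $\lambda,\mu\neq\pm1$. Your witness $F_2\times F_2$ instead outsources the hard finiteness statements to Bieri's theorem on kernels of characters of products of free groups (the two-factor Stallings--Bieri situation, equivalently $\Sigma^2(F_2\times F_2)=\emptyset$): a character nonzero on both factors has finitely generated but non-finitely-presented kernel, while a character vanishing on a factor has non-finitely-generated kernel. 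Your case split via $\chi'=\chi'_1+\chi'_2$ is sound; the one imprecision is attributing the infinite rank of $\ker\chi'_2$ to Nielsen--Schreier, which only gives freeness --- you need the standard fact that a nontrivial normal subgroup of infinite index in a free group of rank $\geq 2$ is infinitely generated. The trade-off between the two approaches: your example is the more classical one and the character analysis is cleaner thanks to the product structure, but it leans on a heavier external input (finiteness properties in degree~2, i.e.\ the second BNSR invariant), whereas the paper stays within the degree-one metabelian $\Sigma$-theory it already invokes elsewhere; the paper's example also has the feature of being solvable, which your example is not.
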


\begin{proof}

	Let $p<q<r$ be three different prime numbers, and consider the additive group $A$ of the ring $\ZZ[\frac{1}{p}, \frac{1}{q}, \frac{1}{r}]$, which is well known to be generated by 
	$X = \{1/p^n\}_{n\in \NN} \cup \{1/q^n\}_{n\in \NN} \cup \{1/r^n\}_{n\in \NN}$,
	but not finitely generated
	(for any given finite set of elements in $A$, let $k$ be the biggest $p$-exponent in the denominators, and it is easy to see that $1/p^{k+1}\in A$ is not in the subgroup generated by them). Finally consider the following two commuting automorphisms $\alpha,\beta\colon A \to A$ given by $\alpha \colon a\mapsto \tfrac{p}{r}a $, and $\beta \colon a\mapsto \tfrac{q}{r}a$.

	Our candidate $G$ is the
	(metabelian)
	semidirect product of $A$ by $\ZZ^2 = \pres{t,s}{[t,s]}$, with action $t\mapsto \alpha$, $s\mapsto \beta$, namely,
	\begin{equation}  \label{eq:candidate group}
		\begin{aligned}
			G
			  & \,=\,  A \rtimes_{\alpha,\, \beta} \ZZ^2                                            \\
			  & \,=\, \Pres{A, t, s}{ts=st,\,\, tat^{-1}=\frac{p}{r}a,\, sas^{-1}=\frac{q}{r}a }  .
		\end{aligned}
	\end{equation}
	(One has to be careful here with the notation: it is typically multiplicative for the nonabelian group $G$, but additive for the abelian group $A$, while $\alpha$ and $\beta$ are defined using products of rational numbers; beware, in particular, of the element $1\in\ZZ\subseteq A$ which is additive and, of course, nontrivial.)

	It is easy to see that $G$ is generated by $1 \in A$, and $t, s \in \ZZ^2$: indeed, conjugating 1 by all powers of $t$ and $s$ we obtain, respectively, $p^n/r^n$ and $q^n/r^n$, and then $\lambda_n p^n/r^n + \mu_n q^n/r^n =1/r^n$ for appropriate integers $\lambda_n, \mu_n$, by Bezout's identity; with the same trick and having $r^n/p^n$ and $1/r^n$, we get $1/p^nr^n$ and so, $1/p^n$; and similarly, one gets $1/q^n$. Note that, in order to obtain all of $A$, it is enough to get $1/p^n$, $1/q^n$ and $1/r^n$ for $n$ big enough; this will be used later.

	To see that $G$ is finitely presented, it is enough to use
	Theorem~A(ii) in \cite{bieri_valuations_1980}, which provides a precise condition for
	a finitely generated metabelian group to be  finitely presented. This is a result, due to Bieri and Strebel, that later lead to the development of the so called Bieri--Neumann--Strebel theory (see~ \cref{sec:BNS}).

	Note that the group $G$ is finitely generated and metabelian, having $A$ as an abelian normal subgroup with quotient~$\mathbb{Z}^2$. We know that $A$ is not finitely generated as group; however, with $\mathbb{Z}^2$ acting by conjugation, $A$ becomes a $\mathbb{Z}^2$-module, which is finitely generated by the exact same argument as in the previous paragraph. But even more: for all nontrivial valuation $v\colon \mathbb{Z}^2 \to \mathbb{R}$, $A$ is also finitely generated over at least one of the two monoids $\{(n,m)\in \mathbb{Z}^2 \mid v(n,m)\geqslant 0\}$, or $\{(n,m)\in \mathbb{Z}^2 \mid v(n,m)\leqslant 0\}$. This is because any such valuation has the form $(n,m)\mapsto \alpha n+\beta m$ for some $(0,0)\neq (\alpha,\beta)\in \mathbb{Z}^2$, and then it is routine to show that, starting with $1\in A$, conjugating only either by those $t^ns^m$ with $\alpha n+\beta m\geqslant 0$, or those with $\alpha n+\beta m\leqslant 0$, and adding, we can get all of $A$ (we leave the details to the reader). By Theorem~A(ii) from \cite{bieri_valuations_1980}, this implies that the group $G$ is finitely presented.

	Now consider the subgroup $H=\langle ts, A\rangle \leqslant G$, which is clearly normal and produces a quotient $G/H=\ZZ = \pres{z}{-}$. Since
	\begin{equation*}
		H\simeq \,  A \rtimes_{\alpha\circ\beta} \ZZ
		\, =\, \Pres{A, z}{ z a z^{-1} = \frac{pq}{r^2}a}
	\end{equation*}
	is generated by $1,z$ (by the same reason as above), we deduce that $G$ is both a $\ZZ$-extension of its finitely generated subgroup $H$, and finitely presented; i.e., $G \in \fpa{\fg \by \ZZ}$.

	It remains to see that $G \notin \fp \by \ZZ$ (\ie $G$ is not a $\ZZ$-extension of any finitely presented subgroup). We do not know whether this is true for every $p,q,r$, but we shall prove it for particular values of the parameters; concretely for $(p,q,r)=(2,3,5)$.

	It is easy to see that the derived subgroup $G'$ is contained in $A$. We shall prove that, when this inclusion is indeed an equality---for example, when $(p,q,r)=(2,3,5)$, as it is straighforward to see---then $G$ is not a $\ZZ$-extension of any finitely presented subgroup. That is, no normal subgroup $N\normaleq G$ with $G/N \simeq \ZZ$ can be finitely presented. In fact, let $N\normaleq G$ be such a subgroup. Then~$A=G'\normaleq N\normaleq G$ and, taking quotients by $A$, we obtain $1\normaleq N/A\normaleq G/A=\ZZ^2 =\langle t,s\rangle$. But $\ZZ\simeq G/N \simeq \frac{G/A}{N/A}\simeq \frac{\ZZ^2}{N/A}$. So, it must be $N/A\simeq \ZZ$.

	Now, choose~$(n,m)\neq (0,0)$ such that $N/A$ is generated by $t^ns^m A$ (we can clearly assume $n>0$); and deduce that $N\simeq A \ltimes_{\varphi} \ZZ$ with action $\varphi\colon a\mapsto \frac{p^nq^m}{r^{n+m}}a$. In particular, $N$ is finitely generated by an argument as above. Note also that the action of $\varphi$ is by multiplication by a simplified fraction, say $\lambda/\mu$, with $\lambda$ and $\mu$ both different from $\pm 1$ (if $m\geqslant 0$ it is multiplication by $\frac{p^nq^m}{r^{n+m}}$; and if $m<0$, it is multiplication by~$\frac{p^n r^{|m|}}{r^nq^{|m|}}$).

	Finally, let us apply again Theorem~A(ii) in \cite{bieri_valuations_1980}, now to the short exact sequence $1\to A\to N\to \mathbb{Z}\to 1$. The only nontrivial valuations $\mathbb{Z}\to \mathbb{R}$ are $1\mapsto \pm 1$, and it is easy to see that $A$ is not finitely generated neither as a $\mathbb{Z}^+$-module (with finitely many elements one cannot obtain $1/\lambda^n$ for $n\gg 0$), nor as a $\mathbb{Z}^-$-module (with finitely many elements one cannot obtain $1/\mu^n$ for $n\gg 0$). Therefore, $N$ is not finitely presented, and the group~$G$ is not a $\ZZ$-extension of any finitely presented subgroup, as we wanted to prove.
\end{proof}

\section{Unique $\ZZ$-extensions}\label{unique}

Recall that the family of unique $\ZZ$-extensions (\ie groups having a unique normal
subgroup with quotient $\ZZ$) which are finitely presented is denoted $\fpa{!\by\ZZ}$.

As seen in \cref{lem:Betti1 and rank of a free-abelian quotient}, the family $\fpa{!
	\by \ZZ}$ consists precisely of those groups $G$ in~$\fpa{* \by \ZZ}$ such that
$\betti{G}=1$. For finitely generated base groups $H$, \cref{prop:deranged
characterization} gives a quite simple characterization of this unicity condition in
terms of the defining automorphism $\alpha$. We first need a convenient description of
the abelianization of a $\ZZ$-extension.

\begin{lem}\label{lem:abelianization of a Z-extension}
	Let $H$ be an arbitrary group, and let $\alpha \in \Aut(H)$. Then,
	\begin{equation}  \label{eq:abelianization of a *-by-Z extension}
		(H\rtimes_{\alpha}\ZZ)\ab \, \isom \, \frac{H\ab}{\im(\alpha\ab-\id)} \, \oplus \, \ZZ \,.
	\end{equation}
	Moreover, if $H$ is finitely generated, then so is $H\rtimes_{\alpha} \ZZ$, and
	\begin{equation} \label{eq:abelianization of a fg-by-Z extension}
		(H\rtimes_{\alpha} \ZZ)\ab \isom \ZZ^{k+1} \oplus T,
	\end{equation}
	where $k$ is the rank of $\ker (\alpha_*\ab-\id)$, and $T$ is a finite abelian group.
\end{lem}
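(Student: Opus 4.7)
The plan is to compute the abelianization directly from the standard presentation in \eqref{eq:presentation of a Z-extension}. Starting from any presentation $H = \pres{X}{R}$, the semidirect product admits the presentation $\pres{X,t}{R, \, txt^{-1} = \alpha(x) \ (x\in X)}$, and abelianizing amounts to adding the commutators of all pairs of generators as relators. Under these extra relations, the relation $txt^{-1} = \alpha(x)$ becomes $x = \alpha(x)$ in the abelianization, i.e.\ $(\alpha\ab - \id)(\bar{x}) = 0$ for every generator $\bar x$ of $H\ab$. Since $\alpha\ab - \id$ is a homomorphism, imposing this on a generating set is equivalent to killing the full image $\im(\alpha\ab - \id) \leqslant H\ab$. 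Together with the additional generator $t$ (now commuting with everything), this yields the first isomorphism
\begin{equation*}
(H \rtimes_{\alpha} \ZZ)\ab \,\isom\, \frac{H\ab}{\im(\alpha\ab - \id)} \,\oplus\, \ZZ \,.
\end{equation*}

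For the \emph{moreover} part, assume $H$ is finitely generated. Then $H \rtimes_{\alpha} \ZZ$ is generated by (any generating set for) $H$ together with $t$, hence finitely generated; in particular its abelianization is a finitely generated abelian group, and so of the form $\ZZ^{k'+1} \oplus T$ for some $k' \geqslant 0$ and some finite abelian group $T$ (the extra $\ZZ$ summand coming from $t$). It only remains to identify $k'$ with the rank of $\ker(\alpha\abf - \id)$.

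To do this, I would compute the Betti number of the first summand in \eqref{eq:abelianization of a *-by-Z extension} by tensoring with $\QQ$, since for any finitely generated abelian group $A$ one has $\betti{A} = \dim_{\QQ}(A \otimes \QQ)$ (see \cref{lem:Betti1 properties}). Right-exactness of $-\otimes \QQ$ gives
\begin{equation*}
\Bigl(\tfrac{H\ab}{\im(\alpha\ab - \id)}\Bigr) \otimes \QQ \,\isom\, \tfrac{H\ab \otimes \QQ}{\im\bigl((\alpha\ab - \id)\otimes \QQ\bigr)} ,
\end{equation*}
and since tensoring with $\QQ$ kills the torsion of $H\ab$, the left factor becomes $H\abf \otimes \QQ \isom \QQ^{r}$ with $r = \betti{H}$, and $\alpha\ab \otimes \QQ$ becomes $\alpha\abf \otimes \QQ$. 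Rank-nullity applied to the endomorphism $\alpha\abf \otimes \QQ - \id$ of the finite-dimensional $\QQ$-vector space $\QQ^{r}$ gives that the cokernel dimension coincides with the kernel dimension, namely $\dim_{\QQ} \ker(\alpha\abf \otimes \QQ - \id)$. Finally, since $\ker$ commutes with the flat extension $\ZZ \hookrightarrow \QQ$, this last quantity equals the rank of $\ker(\alpha\abf - \id) \leqslant \ZZ^{r}$, which is precisely $k$. Adding the $\ZZ$ summand coming from $t$ yields the desired decomposition $\ZZ^{k+1} \oplus T$.

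The only mildly delicate point is keeping track of the torsion: the torsion subgroup of $H\ab$ is not killed by the quotient by $\im(\alpha\ab - \id)$, and may in fact contribute to (or merge with) the finite part $T$, so one should not try to compute $T$ explicitly, only the free rank $k+1$. Once one commits to extracting $k+1$ via the Betti number and $\QQ$-dimensions, everything reduces to the linear-algebra rank-nullity identity above.
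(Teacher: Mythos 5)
Your proof is correct and follows essentially the same route as the paper: the first isomorphism is obtained identically, by abelianizing the standard presentation and observing that killing $(\alpha\ab-\id)(\bar x)$ on generators kills all of $\im(\alpha\ab-\id)$. For the ``moreover'' part the paper reaches the same rank--nullity identity $\betti{\operatorname{coker}(\alpha\abf-\id)}=\rk\ker(\alpha\abf-\id)$ by invoking the Betti-number properties of \cref{lem:Betti1 properties}, whereas you make the underlying $-\otimes\QQ$ computation explicit; the two arguments are the same in substance.
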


\begin{proof}
	Let $H=\pres{X}{R}$. Abelianizing $H\rtimes_{\alpha} \ZZ =\pres{X,t}{R ,\, tx_i
		t^{-1}=\alpha (x_i) \,\,\, (x_i \in X)}$, we get
	\begin{align}\label{ccc}
		(H\rtimes_{\alpha} \ZZ)\ab        & = \Pres{X,t}{
		\begin{smallmatrix}
		R,\hfill\\
		t x_i t^{-1} =\alpha (x_i) \hfill & \hfill (x_i \in X),                                                    \\
		x_i x_j =x_j x_i \hfill           & \ \hfill (x_i,x_j \in X),                                              \\
		t x_i =x_i t \hfill               & \hfill (x_i \in X)\phantom{,}
		\end{smallmatrix} \notag
		}\\[3pt]
		                                  & \simeq \, \Pres{X}{
		\begin{smallmatrix}
		R,\hfill\\
		x_i = \alpha (x_i)\hfill          & \hfill (x_i \in X),                                                    \\
		x_i x_j =x_j x_i \hfill           & \ \hfill (x_i,x_j \in X)
		\end{smallmatrix}
		}
		\times \pres{t}{-}
		\\[3pt] \notag
		                                  & \simeq\, \frac{H\ab}{\im ( \alpha\ab - \id)} \, \oplus \, \pres{t}{-} \, .
	\end{align}
	For the second part, suppose that $H$ is finitely generated. Then so is $H\ab$ and
	thus, using \cref{lem:Betti1 properties}, we have
	\begin{equation} \label{eq: Betti fg-by-Z}
		\begin{aligned}
			\betti{H\rtimes_{\alpha}\ZZ}
			  & = \Betti{\frac{H\ab}{\im ( \alpha\ab - \id)}}+1       \\
			  & = \betti{H\ab} - \betti{\im ( \alpha\ab - \id)} +1    \\
			  & = \betti{H\ab_*}-  \betti{\im (\alpha_*\ab - \id)} +1 \\
			  & = \betti{\ker (\alpha_*\ab - \id)} +1                 \\
			  & = \rk \,(\ker (\alpha_*\ab - \id)) +1,
		\end{aligned}
	\end{equation}
	which is what we wanted to prove.
\end{proof}

This last result, combined with \cref{lem:Betti1 and rank of a free-abelian quotient},
provides a computable characterization for automorphisms defining $!\fg\by\ZZ$ groups.

\begin{prop}\label{prop:deranged characterization}
	Let $H$ be an arbitrary group, and $\alpha \in \Aut(H)$ such that the semidirect
	product $ H\rtimes_{\alpha}\ZZ$ is finitely generated. Then, the following conditions
	are equivalent:
	\begin{enumerate}[(a)]
		\item \label{item:!-by-Z}$H\rtimes_{\alpha}\ZZ$ is $! \by \ZZ$;
		\item \label{item:betti=1}$\betti{H\rtimes_{\alpha}\ZZ }=1$;
		\item \label{item:H=preabelianization of torsion} $H=\ker(\piabf)$ (\ie $H$ is the
		      full preimage of the torsion subgroup of $(H\rtimes_{\alpha}\ZZ)\ab$ under
		      $\piab$);
		\item \label{item:characteristic}$H$ is a fully characteristic subgroup of
		      $H\rtimes_{\alpha}\ZZ$.
	\end{enumerate}
	Moreover, if $H$ is finitely generated, then the following additional condition is also
	equivalent:
	\begin{enumerate}[(a),resume]
		\item \label{item:deranged} $\alpha\abf$ has no nontrivial fixed points
		      (equivalently, $1$ is not an eigenvalue of $\alpha\abf$, $\ker(\alpha\abf -
		      \id)=0$, or $\det(\alpha\abf - \id )\neq 0$).
	\end{enumerate}
\end{prop}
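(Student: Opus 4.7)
The plan is to establish the cycle $(a) \Leftrightarrow (b) \Leftrightarrow (c) \Rightarrow (d) \Rightarrow (a)$, and to treat $(b) \Leftrightarrow (e)$ separately under the extra finite generation hypothesis. The implication $(a) \Leftrightarrow (b)$ is immediate from \cref{lem:Betti1 and rank of a free-abelian quotient} applied to $G = H\rtimes_{\alpha}\ZZ$, since being $! \by \ZZ$ means having Betti number exactly $1$.

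For $(b) \Leftrightarrow (c)$, I would invoke \cref{lem:abelianization of a Z-extension}, which gives $G\ab \simeq H\ab/\im(\alpha\ab-\id) \oplus \langle t \rangle$ with $\piab(H)$ equal to the first summand. Thus $H \subseteq \ker(\piabf)$ if and only if $H\ab/\im(\alpha\ab - \id)$ is entirely torsion, which by \cref{lem:Betti1 properties} is equivalent to $\betti{G} = 1$. Moreover, when $\betti{G} = 1$, both $H$ and $\ker(\piabf)$ are normal in $G$ with quotient $\ZZ$, so the canonical surjection $G/H \onto G/\ker(\piabf)$ is a surjection $\ZZ \onto \ZZ$, hence an isomorphism, forcing $H = \ker(\piabf)$. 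For $(c) \Rightarrow (d)$, note that $\ker(\piabf) = \piab^{-1}(T)$ where $T$ is the torsion subgroup of $G\ab$; since every endomorphism of $G$ induces an endomorphism of $G\ab$ preserving $T$, the subgroup $\ker(\piabf)$ is fully characteristic.

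The key nontrivial step is $(d) \Rightarrow (a)$. Suppose $H$ is fully characteristic in $G$, and let $N \normaleq G$ with $G/N \simeq \ZZ$, via quotient map $\pi_N \colon G \onto \ZZ$. The semidirect product structure provides the splitting $G = H \rtimes \langle t \rangle$ with $H \cap \langle t \rangle = \trivial$, so composing $\pi_N$ with the inclusion $\ZZ \into G$, $n \mapsto t^n$, yields an endomorphism $\phi \in \End(G)$ with $\ker \phi = N$. By full characteristicity $\phi(H) \subseteq H$, and by construction $\phi(H) \subseteq \langle t \rangle$, so $\phi(H) \subseteq H \cap \langle t \rangle = \trivial$, giving $H \subseteq N$. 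But then $N/H$ is a subgroup of $G/H \simeq \ZZ$ with infinite cyclic quotient $G/N \simeq \ZZ$, which forces $N/H = 0$, \ie $N = H$.

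Finally, $(b) \Leftrightarrow (e)$ under finite generation drops out of the Betti computation carried out in the proof of \cref{lem:abelianization of a Z-extension}: $\betti{G} = \rk(\ker(\alpha\abf - \id)) + 1$, so $\betti{G} = 1$ is equivalent to $\alpha\abf - \id$ being injective on $H\abf \simeq \ZZ^{\betti{H}}$, matching the four equivalent phrasings in $(e)$. The main obstacle is $(d) \Rightarrow (a)$: the other steps follow mechanically from the abelianization formula, while this one requires manufacturing an endomorphism of $G$ with prescribed kernel $N$, crucially exploiting both the splitting of the extension and the invariance of $H$ under \emph{endomorphisms} rather than merely automorphisms.
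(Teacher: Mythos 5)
Your proposal is correct, and for most steps it coincides with the paper's proof: $(a)\Leftrightarrow(b)$ via \cref{lem:Betti1 and rank of a free-abelian quotient}, $(b)\Leftrightarrow(c)$ via the abelianization formula of \cref{lem:abelianization of a Z-extension}, $(c)\Rightarrow(d)$ because the torsion subgroup of $G\ab$ and hence its full preimage are fully characteristic, and $(b)\Leftrightarrow(e)$ from the computation $\betti{H\rtimes_\alpha\ZZ}=\rk(\ker(\alpha\abf-\id))+1$. The one place where you genuinely diverge is the implication out of condition $(d)$. The paper proves $(d)\Rightarrow(b)$ by contradiction: assuming $\betti{G}\geqslant 2$ it picks an epimorphism $\rho\colon G\onto\ZZ^2$ not killing some $h\in H$, splits off the primitive direction containing $\rho(h)$, and composes back into $\gen{t}$ to get an endomorphism sending $h$ to a nontrivial power of $t$, contradicting full characteristicity. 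You instead prove $(d)\Rightarrow(a)$ directly: for any $N\normaleq G$ with $G/N\simeq\ZZ$ you build the endomorphism $\phi=\iota\circ\pi_N$ with $\ker\phi=N$, use invariance of $H$ under $\phi$ together with $\phi(G)\subseteq\gen{t}$ and $H\cap\gen{t}=\trivial$ to get $H\subseteq N$, and then conclude $N=H$ since $\ZZ$ has no proper subgroup with quotient $\ZZ$. Both arguments exploit the same device --- an endomorphism obtained by projecting onto a free-abelian quotient and re-embedding via the section $n\mapsto t^n$ --- but yours is a direct uniqueness argument landing on $(a)$ rather than a contradiction landing on $(b)$; it is arguably cleaner, and as a bonus it reproves $(a)\Rightarrow$ uniqueness without passing through the Betti-number characterization at all. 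Your extra remark in $(b)\Leftrightarrow(c)$, upgrading $H\subseteq\ker(\piabf)$ to equality via the surjection $\ZZ\simeq G/H\onto G/\ker(\piabf)\simeq\ZZ$ and Hopficity of $\ZZ$, fills in a detail the paper leaves implicit.
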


\begin{proof}
	$[\ref{item:!-by-Z}\Leftrightarrow \ref{item:betti=1}$]. This is precisely the content
	of \cref{lem:Betti1 and rank of a free-abelian quotient}(ii), for $k=1$.

	$[\ref{item:betti=1} \Leftrightarrow \ref{item:H=preabelianization of torsion}]$. This follows immediately from~\eqref{ccc}.

	$[\ref{item:H=preabelianization of torsion} \Rightarrow \ref{item:characteristic}]$.
	This is clear, since the torsion subgroup of an abelian group is fully characteristic,
	and so is its full preimage.

	$[\ref{item:characteristic} \Rightarrow \ref{item:betti=1}]$. By contradiction, suppose
	that $H\rtimes_{\alpha} \pres{t}{-}$ has Betti number at least~$2$. Then, there exists
	an epimorphism $\rho \colon H\rtimes_{\alpha} \gen{t} \onto \ZZ^2$, and an element
	$h\in H$ such that $\rho(h)\neq 0$. Take a primitive element $v\in \ZZ^2$ such that
	$\rho(h)=\lambda v$ for some $\lambda \in \ZZ$, $\lambda \neq 0$. The subgroup $\gen{v}\simeq \ZZ$ is a direct summand of $\ZZ^2$, and the composition
\begin{equation*}
\arraycolsep=2.5pt
\begin{array}{ccccccl}
H\rtimes_{\alpha} \gen{t} & \overset{\rho}{\onto} & \ZZ^2 & \onto & \gen{v} & \into &
H\rtimes_{\alpha} \gen{t} \\ &&&& v\ & \mapsto & t
\end{array}
\end{equation*}
	provides an endomorphism of $H\rtimes_{\alpha}\ZZ$ mapping $h\in H$ to $t^{\lambda}$, a contradiction with condition~\ref{item:characteristic}.

	Finally, the equivalence $[\ref{item:betti=1}\Leftrightarrow \ref{item:deranged}]$ is immediate from the equality \eqref{eq: Betti fg-by-Z}.
\end{proof}

\begin{defn}\label{def:deranged}
	We say that an automorphism $\alpha \in \Aut(H)$ is \defin{deranged} if one of (and thus all) the conditions $\ref{item:!-by-Z}-\ref{item:characteristic}$ in \cref{prop:deranged characterization} hold. Note that when the group $H$ is finitely generated, condition \ref{item:deranged} provides further another equivalent definition
	of derangedness,
	this time expressed in terms of the automorphism. Note that in this last case, given $\alpha \in \Aut(H)$ by images of generators, it is easy to check algorithmically whether $\alpha$ is deranged or not.
\end{defn}

In particular, every automorphism of a group $H$ with $\betti{H}=0$, \ie with
$|H\ab|<\infty$, is (trivially) deranged. Note also that derangedness is, in fact, a
property of outer automorphisms. The sets of deranged automorphisms and deranged outer
automorphisms of a group $H$ will be denoted, respectively, $\Aut_{\der}(H)$ and
$\Out_{\der}(H)$.

Consequently, for any finitely generated group $H$ we have
\begin{equation*}
	\fpa{!H \by \ZZ} =\{ \,H\rtimes_{\alpha}\ZZ \ \fp \mid \alpha \in \Aut_{\der}(H) \,\},
\end{equation*}
and then,
\begin{align} \label{eq:descriptions of !fg-by-Z}
	\notag \fpa{!\fg \by \ZZ} \,
	   & = \, \{ \, H\rtimes_{\alpha}\ZZ \ \fp \mid H\ \fg \text{ and }\alpha \in \Aut_{\der}(H) \,\} \\
	\, & \subseteq \, \fpa{! \by \ZZ} \ = \ \{\, G \ \fp \mid \betti{G}=1 \} \, .
\end{align}
Note that in \eqref{eq:descriptions of !fg-by-Z} we wrote inclusion and not an equality
because, in principle, it could happen that a finitely presented $! \by \ZZ$ group has
his unique normal subgroup with quotient isomorphic to $\ZZ$  being not finitely
generated. In the next section we shall construct such a group (see~\cref{cor:K*Z is
	fg-by-Z iff K=1}) showing that this inclusion is strict.

\section{Undecidability results}\label{sec:undecidability}

Observe that if $H$ is finitely generated or finitely presented, then so is
$H\rtimes_{\alpha}\ZZ$ for every $\alpha \in \Aut(H)$, \ie
\begin{align*}
	\fg \by \ZZ & \, \subseteq \, \fga{* \by \ZZ}, 
	\\
	\fp \by \ZZ & \, \subseteq \, \fpa{* \by \ZZ} \, .
\end{align*}

However, it is less obvious that the converse is not true in general: a semidirect
product $H\rtimes_{\alpha}\ZZ$ can be finitely presented, with $H$ not being finitely
generated. Or, as was hinted few lines above, even worse: there do exist \emph{finitely
	presented} $\ZZ$-extensions which are \emph{not} $\ZZ$-extensions of any finitely
generated group. In other words, the following inclusion is strict:
\begin{equation*}
	\fpa{\fg \by \ZZ} \,\subset \, \fpa{* \by \ZZ} \, .
\end{equation*}
Indeed, this can happen even for unique $\ZZ$-extensions. This fact follows easily from the next lemma, showing that any free product $K\freeprod \ZZ$ has the form of a certain semidirect product.

\begin{lem}\label{lem:*K semidirect Z = K*Z}
	Let $K=\pres{X}{R}$ be an arbitrary group with generators $X = \{x_j\}_{j\in J}$, and
	consider the free product
	\begin{equation*}
		\Freeprod_{i\in \ZZ} K=\Pres{X^{(i)} \, (i\in \ZZ)}{R^{(i)} \, (i\in \ZZ)} \, ,
	\end{equation*}
	where $\Pres{X^{(i)}}{R^{(i)}}$ $(i \in \ZZ)$ are disjoint copies of the original
	presentation for~$K$. Then,
	\begin{equation}\label{eq:*K semidirect Z = K*Z}
		\left(\Freeprod_{i\in \ZZ} K \right) \rtimes_\tau \ZZ \ \isom  \ K \freeprod \ZZ\,
		,\end{equation}
		where $\tau$ is the automorphism of $\Freeprod_{i\in \ZZ} \, K$ defined by
		\begin{equation} \label{eq:translation automorphism}
			\tau \colon x_j^{(i)} \mapsto x_j^{(i+1)} \ \left(\forall i \in \ZZ \, , \, \forall x_j^{(i)} \in X^{(i)}\right) .
		\end{equation}
		\end{lem}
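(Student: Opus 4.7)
The plan is to exhibit the isomorphism via a Reidemeister--Schreier computation on the kernel of the natural retraction of $K \freeprod \ZZ$ onto its $\ZZ$-factor. Let $t$ be a generator of this $\ZZ$-factor and consider the homomorphism $\phi \colon K \freeprod \ZZ \onto \ZZ$ sending $x_j \mapsto 0$ for every $x_j \in X$ and $t \mapsto 1$. The inclusion $\ZZ \into K\freeprod\ZZ$ as second factor splits $\phi$, so, with $N = \ker\phi$, we obtain
\begin{equation*}
K \freeprod \ZZ \, \isom \, N \rtimes_{\gamma} \ZZ,
\end{equation*}
where $\gamma \in \Aut(N)$ denotes the conjugation of $N$ by $t$. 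Thus the lemma reduces to producing an isomorphism $N \isom \Freeprod_{i\in\ZZ} K$ under which $\gamma$ corresponds to the shift $\tau$ of \eqref{eq:translation automorphism}.

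The core of the argument is to compute a presentation for $N$ using the obvious shift-invariant Schreier transversal $\{t^i : i \in \ZZ\}$. Since the coset representative of $t^i x_j$ is $t^i$ and that of $t^i \cdot t$ is $t^{i+1}$, the nontrivial Reidemeister--Schreier generators of $N$ are precisely the elements
\begin{equation*}
x_j^{(i)} \, := \, t^i x_j t^{-i} \qquad (x_j \in X, \ i \in \ZZ),
\end{equation*}
while the $t$-generators all collapse to the identity. Rewriting each relator $t^i r t^{-i}$ (for $r \in R$, $i \in \ZZ$) in this alphabet, every occurrence of $x_j$ in $r$ becomes $x_j^{(i)}$, so the rewriting yields exactly the copied relator $r^{(i)}$. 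We conclude that
\begin{equation*}
N \, \isom \, \Pres{X^{(i)}\,(i\in\ZZ)}{R^{(i)}\,(i\in\ZZ)} \, = \, \Freeprod_{i\in\ZZ} K.
\end{equation*}

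Finally, under this identification the action of $\gamma$ on the generators is
\begin{equation*}
\gamma(x_j^{(i)}) \, = \, t \cdot t^i x_j t^{-i} \cdot t^{-1} \, = \, t^{i+1} x_j t^{-(i+1)} \, = \, x_j^{(i+1)},
\end{equation*}
which is precisely $\tau$. Combining everything yields the desired isomorphism $(\Freeprod_{i\in\ZZ} K) \rtimes_{\tau} \ZZ \isom K \freeprod \ZZ$. The only delicate point is the Reidemeister--Schreier rewriting, but the shift-invariance of the transversal makes it essentially mechanical; alternatively, one could invoke Bass--Serre theory, viewing $K\freeprod\ZZ$ as an HNN extension of $K$ over the trivial subgroup and noting that $N$ acts on the associated Bass--Serre tree with quotient a bi-infinite line whose vertex groups are copies of $K$ and whose edge groups are trivial.
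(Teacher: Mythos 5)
Your proof is correct, but it runs in the opposite direction from the paper's. The paper starts from the presentation of $\left(\Freeprod_{i\in\ZZ} K\right)\rtimes_\tau \ZZ$, observes that the conjugation relations $t\,x_j^{(i)}\,t^{-1}=x_j^{(i+1)}$ make all the relator sets $R^{(i)}$ with $i\neq 0$ redundant, eliminates the generators $X^{(i)}$ for $i\neq 0$ by Tietze transformations to land on $\pres{t,X^{(0)}}{R^{(0)}}=K\freeprod\ZZ$, and then writes down the two explicit mutually inverse homomorphisms ($x_j^{(0)}\mapsto x_j^{(0)}$, $t\mapsto t$ one way, and $x_j^{(i)}\mapsto t^i x_j^{(0)} t^{-i}$ the other). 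You instead start from $K\freeprod\ZZ$, retract onto the $\ZZ$-factor, and compute a presentation of the kernel $N$ by Reidemeister--Schreier with the transversal $\{t^i\}$, identifying $N$ with $\Freeprod_{i\in\ZZ}K$ and the conjugation by $t$ with the shift $\tau$; your Schreier generators $x_j^{(i)}=t^i x_j t^{-i}$ are exactly the images under the paper's second map, so the underlying identification is identical, only the machinery invoked differs. Each version has its merits: the paper's is entirely self-contained (no appeal to Reidemeister--Schreier or Bass--Serre theory, just well-definedness of two maps), whereas yours explains conceptually \emph{why} the base group is an infinite free product of copies of $K$ --- it is literally the kernel of the retraction, i.e.\ the vertex groups of the quotient line in the Bass--Serre picture --- and this viewpoint generalizes more readily. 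Your computation is complete and all steps check out (the transversal is prefix-closed, the $t$-generators collapse, and the rewritten relators are the $r^{(i)}$), so there is no gap; just make sure you regard the splitting $K\freeprod\ZZ\isom N\rtimes_\gamma\ZZ$ as established by the general discussion of split exact sequences (as in Section~\ref{sec:Z-extensions} of the paper) rather than as something needing separate proof.
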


		\begin{proof}
			Naming $t$ the generator of $\ZZ$, we have
			\begin{align}
				\!\!\left(\Freeprod_{i\in \ZZ} \, K \right) \rtimes_\tau \ZZ
				  & \,=\,  \Pres{ t, X^{(i)} \ (i\in \ZZ)} {\!\!
				\begin{array}{l}
				R^{(i)} \ (i\in \ZZ), \notag\\
				t \, x_j^{(i)} \, t^{-1} =x_j^{(i+1)} \ \left( i\in \ZZ,\, x_j^{(i)}\! \in X^{(i)}\right) \!\!
				\end{array}
				} \\[3pt]
				  & \,\isom\,
				\Pres{ t, X^{(i)} \ (i\in \ZZ)} {\!\!
				\begin{array}{l}
				R^{(0)}, \\
				\label{eq:infinite free product rtimes Z}
				t \, x_j^{(i)} \, t^{-1} =x_j^{(i+1)} \ \left( i\in \ZZ,\, x_j^{(i)}\! \in  X^{(i)}\right) \!\!
				\end{array}
				} \\[3pt]
				\label{eq:free product}
				  & \,\isom\,
				\Pres{ t, X^{(0)} } {R^{(0)}} \,=\, K * \ZZ \,.
			\end{align}
			To see the last isomorphism, consider the maps from~\eqref{eq:free product}
			to~\eqref{eq:infinite free product rtimes Z} given by
			\begin{align}
				t         & \mapsto t, \notag                                  \\
				x_j^{(0)} & \mapsto x_j^{(0)} \ (x_j^{(0)}\in X^{(0)}), \notag
			\end{align}
			and from~\eqref{eq:infinite free product rtimes Z} to~\eqref{eq:free product} given by
			\begin{align}
				t         & \mapsto t, \notag                                                         \\
				x_j^{(i)} & \mapsto t^{i}x_j^{(0)}t^{-i} \ (i\in \ZZ, \ x_j^{(i)}\in X^{(i)}). \notag
			\end{align}
			It is straightforward to see that they are both well-defined homomorphisms, and one inverse to the other.
		\end{proof}

		\begin{cor}\label{cor:K*Z is fg-by-Z iff K=1}
			If $K$ is a group with finite abelianization  (\ie $\betti{K}=0$), then the free
			product $K \freeprod \ZZ$ is a unique $\ZZ$-extension, and the following conditions
			are equivalent:
			\begin{enumerate}[(a)]
				\item $K\freeprod \ZZ \text{ is }\fg \by \ZZ$;
				\item $K\freeprod \ZZ \text{ is }\fp \by \ZZ$;
				\item $K\freeprod \ZZ \text{ is }\propsty{abelian} \by \ZZ$;
				\item $K\freeprod \ZZ \text{ is }\propsty{finite} \by \ZZ$;
				\item $K\freeprod \ZZ \text{ is }\propsty{free} \by \ZZ$;
				\item $K=\trivial$.
			\end{enumerate}
			In particular, $!\fg \by \ZZ$ is a strict subfamily of $\fpa{! \by \ZZ}$ (and so,
			$\fg \by \ZZ$ is a strict subfamily of $\fpa{* \by \ZZ}$).
		\end{cor}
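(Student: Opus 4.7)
The plan is to first show that $K \freeprod \ZZ$ is a unique $\ZZ$-extension, then use \cref{lem:*K semidirect Z = K*Z} to identify its (necessarily unique) base group, and finally translate each of (a)--(e) into a property of that base group alone.

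For the uniqueness step I would compute $(K \freeprod \ZZ)\ab \isom K\ab \oplus \ZZ$ (since abelianization commutes with free products), so the hypothesis $\betti{K}=0$ yields $\betti{K \freeprod \ZZ}=1$; \cref{lem:Betti1 and rank of a free-abelian quotient}\,(ii) then places $K \freeprod \ZZ$ in $! \by \ZZ$, and \cref{lem:*K semidirect Z = K*Z} identifies the unique base group as $B := \Freeprod_{i \in \ZZ} K$. Consequently each of (a)--(e) becomes a statement about $B$ alone, and the implications (f) $\Rightarrow$ (a)--(e) are immediate (when $K=\trivial$, $B=\trivial$ trivially satisfies all of them).

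The central step is (a) $\Rightarrow$ (f), which I would prove by contraposition: if $K \neq \trivial$ and $x \in K$ is nontrivial, then the normal form for the free product $B$ shows that any finitely generated subgroup of $B$ lies inside $\Freeprod_{i \in S} K$ for some finite $S \subset \ZZ$, which is a proper subgroup of $B$ since $x^{(i)} \notin \Freeprod_{i \in S} K$ whenever $i \notin S$. The implications (b), (d) $\Rightarrow$ (a) are then trivial, as finite presentation and finiteness both entail finite generation. For (c) $\Rightarrow$ (f), I would note that nontrivial elements drawn from two distinct factors $K^{(i)}$ and $K^{(j)}$ never commute in $B$, so $B$ abelian forces $K = \trivial$. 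For (e) $\Rightarrow$ (f), if $B$ is free then by Nielsen--Schreier each subgroup $K^{(i)} \isom K$ is free, and a free group with finite abelianization is necessarily trivial.

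Finally, for the strictness assertion I would take $K = \ZZ/2\ZZ$: this $K$ is finitely presented with $\betti{K}=0$, so $K \freeprod \ZZ$ is a finitely presented unique $\ZZ$-extension that, by the equivalence just proved, is not $\fg \by \ZZ$. The main delicate point is the normal-form argument underlying (a) $\Rightarrow$ (f); the remaining implications reduce to standard structural facts about free products.
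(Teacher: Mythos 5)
Your proposal is correct and follows essentially the same route as the paper: compute $(K\freeprod\ZZ)\ab \isom K\ab\oplus\ZZ$ to get Betti number $1$ and hence uniqueness, identify the unique base group as $\Freeprod_{i\in\ZZ}K$ via \cref{lem:*K semidirect Z = K*Z}, and observe that this infinite free product is finitely generated (resp.\ finitely presented, abelian, finite, free) if and only if $K$ is trivial. You merely spell out the normal-form and Nielsen--Schreier details that the paper leaves implicit, and your choice of $K=\ZZ/2\ZZ$ is a valid instance of the paper's ``any nontrivial finitely presented group with finite abelianization.''
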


		\begin{proof}
			Note that the abelianization of $K\freeprod \ZZ$ is $(K\freeprod \ZZ)\ab =K\ab \oplus
			\ZZ$, where $|K\ab|<\infty$ by hypothesis; therefore, $\betti{K \freeprod \ZZ} =1$.
			Thus, from~\cref{lem:Betti1 and rank of a free-abelian quotient}(ii), $K\freeprod \ZZ$
			is a unique $\ZZ$-extension, \ie it contains a unique normal subgroup with quotient
			$\ZZ$. By~\cref{lem:*K semidirect Z = K*Z}, this unique normal subgroup is
			isomorphic to $\Freeprod_{z\in \ZZ} K$, which is finitely generated (resp., finitely
			presented, abelian, finite, free) if and only if $K$ is trivial (the free case being true because $\betti{K}=0$).

			Taking $K$ to be a nontrivial finitely presented group with finite abelianization, we obtain that $K\freeprod \ZZ$ belongs to $\fpa{! \by \ZZ}$ but not to !$\fg \by \ZZ$.
		\end{proof}

		Next, inspired by a trick initially suggested by Maurice Chiodo, we will prove a stronger result. Not only the family !$\fg \by \ZZ$ is a strict subfamily of $\fpa{! \by \ZZ}$, but the membership problem between these two families is undecidable: it is impossible
		to decide algorithmically whether a given finitely presented unique $\ZZ$-extension is
		$\fg \by \ZZ$ or not, \ie whether its unique base group is finitely generated or not.
		To see this, we use a classic undecidability result: there is no algorithm which, on input a finite presentation, decides whether the presented group is trivial or not (see, for example,~\cite{miller_iii_decision_1992}).

		\begin{thm}\label{thm:fg-by-Z is undecidable}
			For every group property $\propi \in \{ \fg, \fp, \propsty{abelian}, \propsty{finite}, \propsty{free}\}$, the membership problem for \,$\propi \by \ZZ$\, within\, $\fpa{! \by \ZZ}$ is undecidable.

			In other words, there exists no algorithm which, on input a finite presentation of a
			group with Betti number $1$, decides whether it presents a $\fg \by \ZZ$ (resp., $\fp
			\by \ZZ$, $\propsty{abelian}\by \ZZ$, $\propsty{finite}\by \ZZ$, $\propsty{free}\by \ZZ$) group or not.
		\end{thm}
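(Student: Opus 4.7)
The plan is to reduce the classical undecidable triviality problem to the membership problem at hand, leveraging \cref{cor:K*Z is fg-by-Z iff K=1}: for any finitely presented $K$ with finite abelianization, the free product $K \freeprod \ZZ$ lies in $\fpa{!\by\ZZ}$, and it belongs to any of the five families $\propi\by\ZZ$ (with $\propi \in \{\fg, \fp, \propsty{abelian}, \propsty{finite}, \propsty{free}\}$) if and only if $K\simeq\trivial$. Hence a single reduction will handle all five target properties simultaneously.

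Given an arbitrary finite presentation $P$ of a group $K$, the first step is to apply a \emph{Chiodo trick}: a recursive construction that turns $P$ into a finite presentation $P'$ of a group $\tilde{K}$ with $\betti{\tilde{K}}=0$ and $\tilde{K}\simeq\trivial \Leftrightarrow K\simeq\trivial$. The idea is to adjoin auxiliary generators and relators designed to (i) force every original generator to equal a commutator in $\tilde K$, so as to kill the free-abelian part of the abelianization; (ii) be trivially consistent with every $x_i = 1$, so that $K\simeq\trivial$ implies $\tilde{K}\simeq\trivial$; and (iii) impose no additional consequences on the subgroup of $\tilde K$ generated by the $x_i$, so that nontriviality of $K$ persists in $\tilde K$. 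Then, given $P$, the algorithm outputs the natural finite presentation of $\tilde K \freeprod \ZZ$, obtained by adjoining to $P'$ a free generator $t$ with no new relations. By \cref{cor:K*Z is fg-by-Z iff K=1} this output always lies in $\fpa{!\by\ZZ}$, and it lies in $\propi\by\ZZ$ precisely when $K\simeq\trivial$. A decision procedure for any of the five membership problems would therefore decide triviality of $P$, contradicting the classical undecidability result cited in the preamble to the theorem.

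The main obstacle I anticipate is the rigorous construction of the Chiodo trick, particularly the converse implication $\tilde K \simeq \trivial \Rightarrow K\simeq\trivial$. Naïve attempts to kill the free-abelian part of $K\ab$ tend to over-collapse $K$ (for example, simply imposing $x_i^2 = 1$ for every generator trivialises $\ZZ/3$), whereas careful attempts to prevent over-collapse often leave a persistent $\ZZ$- or $\ZZ/2$-factor in $\tilde{K}$ even when $K$ itself is trivial. A working design must satisfy both constraints at once, typically by introducing the auxiliary relators together with fresh free generators that serve as formal commutator witnesses, arranged so that a natural retraction $\tilde K \to K$ exists (giving (iii)) while the trivialisation of $K$ cascades through the auxiliary generators (giving (ii)). Once the trick is in place, the remainder of the argument is essentially formal composition with \cref{cor:K*Z is fg-by-Z iff K=1}.
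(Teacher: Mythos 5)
Your overall strategy---reduce the triviality problem to the membership problem via the free product $K \freeprod \ZZ$ and \cref{cor:K*Z is fg-by-Z iff K=1}---is exactly the paper's, and the observation that a single reduction handles all five properties at once is correct. However, your proof has a genuine gap where you rely on an unconstructed ``Chiodo trick'': a recursive transformation $P \mapsto P'$ turning an arbitrary finite presentation of $K$ into one of a group $\tilde K$ with $\betti{\tilde K}=0$ and $\tilde K \simeq \trivial \Leftrightarrow K \simeq \trivial$. You describe the properties such a construction would need, candidly list the ways naive attempts fail, and never actually produce one. Since the entire reduction funnels through this step, the argument as written is incomplete: without $\betti{\tilde K}=0$ you cannot invoke \cref{cor:K*Z is fg-by-Z iff K=1}, and without the two-way preservation of triviality the reduction proves nothing.

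The gap is real but also unnecessary, and this is where the paper's route diverges from yours. Instead of forcing every input into the ``finite abelianization'' case by modifying the group, the paper's algorithm first \emph{computes} $K\ab$ (which is algorithmic, by abelianizing the presentation and applying the classification of finitely generated abelian groups). If $K\ab \neq \trivial$, then $K \neq \trivial$ and the triviality algorithm can answer \NO{} immediately, with no call to the oracle. Only when $K\ab = \trivial$---so $K$ is perfect and $\betti{K}=0$---does one form $\pres{X,t}{R}$ presenting $K \freeprod \ZZ$, which then has Betti number $1$, and hand it to the hypothetical membership oracle; by \cref{cor:K*Z is fg-by-Z iff K=1} the answer is \YES{} exactly when $K$ is trivial. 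This case split decides triviality for \emph{all} inputs, contradicting Adian--Rabin, and completely sidesteps the need to engineer a group $\tilde K$ with collapsed abelianization. If you replace your unproven preprocessing step with this decidable pre-filter, your proof becomes the paper's proof.
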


		\begin{proof}
			We will proceed by contradiction. Assume the existence of an algorithm, say $\mathfrak{A}$, such that, given as input a finite presentation of a group with Betti
			number $1$, outputs \YES\ if it presents a $\propi \by \ZZ$ group, and \NO\ otherwise.

			Now, consider the following algorithm $\mathfrak{B}$ to check triviality: on input an arbitrary finite presentation $K=\pres{X}{R}$:
			\begin{enumerate}[(i)]
				\item abelianize $K$ and, using the Classification Theorem for finitely generated
				      abelian groups, check whether $K\ab$ is trivial or not; if not, answer \NO;
				      otherwise $K$ is a perfect group and so, the new group $K\freeprod \ZZ$ has Betti
				      number 1;
				\item apply $\mathfrak{A}$ to the presentation $\pres{X,t}{R}$, to decide whether
				      $K\freeprod \ZZ$ is a $\propi \by \ZZ$ group or not.
			\end{enumerate}
			According to~\cref{cor:K*Z is fg-by-Z iff K=1}, the output to step (ii) is \YES\ if and only if $K$ is trivial. Hence, $\mathfrak{B}$ is deciding whether the given presentation
			$\pres{X}{R}$ presents the trivial group or not. This contradicts Adian--Rabin's Theorem
			on the undecidability of the triviality problem.
		\end{proof}

		Of course, if the membership problem is not decidable within some family $\HH$, it is
		also undecidable within any superfamily of $\HH$. So, we immediately get the following
		consequence.

		\begin{cor} \label{cor:membership for fg-by-Z and fp-by-Z is undecidable}
			The membership problems for the families \,$\fp \by \ZZ$ and $\fg \by \ZZ$ are undecidable. \qed
		\end{cor}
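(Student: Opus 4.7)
The plan is to derive this directly from \cref{thm:fg-by-Z is undecidable} using the trivial observation already highlighted in the paragraph preceding the corollary: undecidability of a family membership problem within a subfamily forces undecidability within any larger family. More precisely, I would argue as follows.

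First I would recall that, by definition, the (unrestricted) membership problem $\MP(\fg\by\ZZ)$ asks, on input an arbitrary finite presentation $G \in \GG_\fp$, to decide whether $G$ is $\fg \by \ZZ$; and analogously for $\MP(\fp\by\ZZ)$. Suppose, for contradiction, that there exists an algorithm $\mathfrak{A}$ solving $\MP(\fg\by\ZZ)$. Since $\fpa{!\by\ZZ} \subseteq \GG_\fp$, the same algorithm $\mathfrak{A}$, applied to inputs restricted to (finite presentations in) $\fpa{!\by\ZZ}$, would in particular decide whether such a presentation lies in $\fg \by \ZZ$ or not. This is precisely an algorithm solving the membership problem for $\fg\by\ZZ$ within $\fpa{!\by\ZZ}$, contradicting the case $\propi = \fg$ of \cref{thm:fg-by-Z is undecidable}. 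The argument for $\fp\by\ZZ$ is identical, using the case $\propi = \fp$ of the same theorem.

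There is no real obstacle here; the corollary is a packaging statement. The only point worth mentioning is that, by \cref{cor:MP(*-by-Z) MP(!-by-Z) decidable}, we can algorithmically recognize membership in $\fpa{!\by\ZZ}$ (it amounts to checking $\betti{G}=1$), so the reduction is clean: the undecidability of the restricted problem genuinely transfers to the unrestricted one without introducing spurious inputs.
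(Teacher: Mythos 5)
Your argument is exactly the paper's: the corollary is stated as an immediate consequence of \cref{thm:fg-by-Z is undecidable} via the observation that undecidability of membership within a subfamily transfers to any superfamily, which is precisely your reduction. (Your extra remark about recognizing $\fpa{!\by\ZZ}$ is harmless but not needed, since a decider for the unrestricted problem restricts to the promised inputs automatically.)
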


		As stated in the introduction, this is exactly the same as saying that the families
		(of finite presentations) $\fp \by \ZZ$ and $\fg \by \ZZ$ are not recursive. Note that none of these families is neither Markov nor co-Markov, and thus the two undecidability results in \cref{cor:membership for fg-by-Z and fp-by-Z is undecidable} are not contained in the classic ones due to Adian--Rabin. Indeed,
		any finitely presented group is a subgroup of some $\fp \by \ZZ$ (and so, of some $\fg \by \ZZ$) group; therefore the families $\fp \by \ZZ$ and $\fg \by \ZZ$ are not Markov. On the other hand, every $\fp$ group embeds in some $2$-generated simple  group (see \cite[Corollary 3.10]{miller_iii_decision_1992}); since
		\begin{equation*}
			\propsty{simple} \Imp \propsty{perfect} \Imp  \neg (\fg\by\ZZ) \Imp \neg(\fp\by\ZZ) \, ,
		\end{equation*}
		the families $\fp \by \ZZ$ and $\fg \by \ZZ$ are not co-Markov either.

		\section{Implications for the BNS invariant}\label{sec:BNS}

		Since the early 1980's, in a series of papers by R.~Bieri, W.~Neumann, and R.~Strebel (see~\cite{bieri_valuations_1980,bieri_geometric_1987}), several gradually more general invariants---called Sigma (or BNS) Invariants---have been introduced to deal with finiteness conditions for presentations of groups. Concretely in~\cite{bieri_geometric_1987}, they present an invariant that characterizes those normal subgroups of a finitely generated group $G$ that are finitely generated and contain the commutator $[G,G]$ of $G$. Over the years, this theory has been reformulated in more geometric terms (for a modern version see the survey~\cite{strebel_notes_2012}). Below, we recall this construction and characterization, and discuss some implications of our undecidability results from \cref{sec:undecidability}.

		For a finitely generated group $G$, consider the real vector space $\Hom(G,\RR)$ of all
		homomorphisms $\chi\colon G \to \RR$ (from $G$ to the additive group of the field of
		real numbers), which we call \defin{characters} of $G$. Note that, since $\RR$ is
		abelian and torsion-free, any character $\chi$ must factor
		through~$\piabf$~(abelianizing and then killing the torsion), \ie
		\begin{equation*}
			\chi \colon G\stackrel{\piab}{\twoheadrightarrow} G\ab \twoheadrightarrow G\abf \rightarrow \RR \,.
		\end{equation*}
		Thus, $\Hom(G,\RR)=\Hom(\ZZ^r,\RR)=\RR^r$, where $r=\betti{G}$. We will consider the
		set of nontrivial characters modulo the equivalence relation given by positive scaling:
		\begin{equation} \label{eq:equivalence modulo positive product}
			\chi_1 \sim \chi_2 \Biimp \exists \lambda>0 \text{ s.t. } \chi_2 = \lambda \chi_1\,.
		\end{equation}
		They form the so-called \defin{character sphere} of $G$, denoted
		$\sph{G}=\Hom(G,\RR)^*/\sim$ which, equipped with the quotient topology, is
		homeomorphic to the unit Euclidean sphere of dimension ${r-1}$ (through the natural
		identification of each ray emanating from the origin with its unique point of norm 1).

		For example, if $G$ is not $* \by \ZZ$ (\ie if $\betti{G}=0)$, then $\Hom(G,\RR) =
		\{0\}$ and the character sphere is empty (so, for this class of groups the BNS theory
		will be vacuous). More interestingly, if $G$ is $! \by \ZZ$ (\ie $\betti{G}=1$), then
		the character sphere of $G$ is a set of just two points, namely $\sph{G} = \{+1,-1\}$.
		Similarly, if
		$\betti{G}=2,3,\ldots$, then $\sph{G}$ is the unit circle in $\RR^2$, the unit sphere
		in $\RR^3$, and so on.


		For any given (equivalence class of a) nontrivial character $\chi$, consider now the
		following submonoid of $G$, called the \defin{positive cone} of $\chi$:
		\begin{equation}\label{eq:positive cone}
			G_\chi = \{ g \in G \mid \chi(g) \geq 0\} = \chi^{-1}([0,+\infty))\,,
		\end{equation}
		to be thought of as the full subgraph of the Cayley graph $\Gamma(G,X)$ determined by the vertices in $G_{\chi}$ (once a set of generators $X$ is fixed). The Sigma invariant $\bns{G}$ can then be defined as follows (we note that this is not the original definition given in~\cite{bieri_geometric_1987}, but a more geometrically appealing one, which was not noticed to be equivalent until several years later, see~\cite[Theorem 3.19]{meigniez_bouts_1990}).

		\begin{defn} \label{def:BNS invariant}
			Let $G=\langle X\rangle$ be a finitely generated group, and $\Gamma(G,X)$ its Cayley
			graph. Then the set
			\begin{equation}
				\bns{G} = \left\{\, [\chi] \in \sph{G} \mid G_\chi \text{ is connected} \right\} \subseteq \sph{G}
			\end{equation}
			does not depend on the choice of the finite generating set $X$ (see~\cite{strebel_notes_2012}), and is called the \defin{(first) Sigma}---or \defin{BNS}---\defin{invariant} of $G$.
		\end{defn}

		Interestingly, this notion is quite related with commutativity. The
		extreme examples are free and free-abelian groups, for which it is easy to see that the
		BNS invariants are, respectively, the empty set and the full character sphere:
		$\bns{F_r}=\emptyset$, for $r\geqslant 2$; and $\bns{\ZZ^r}=\sph{\ZZ^r}$, for
		$r\geqslant 1$.

		The set of characters vanishing on a certain subgroup $H\leqslant G$ determine the
		following subsphere
		\begin{equation*}
			\sph{G, H} = \{[\chi] \in \sph{G} \mid \chi(H) = 0 \}\subseteq \sph{G} \, ,
		\end{equation*}
		which happens to contain interesting information about $H$ itself.

		\begin{thm}[Bieri--Neumann--Strebel, 
			\cite{bieri_geometric_1987}]\label{thm:H fg iff S(H) in Sigma} Let $H$ be a normal subgroup of a finitely generated group $G$ with $G/H$ abelian. Then, $H$ is finitely generated if and only if $\sph{G,H} \subseteq \bns{G}$. In particular, the commutator subgroup $[G,G]$ is finitely generated if and only if $\bns{G}=\sph{G}$. \qed
		\end{thm}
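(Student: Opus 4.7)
The plan is to first reduce to the case where $Q := G/H$ is torsion-free (and hence free-abelian of finite rank), and then handle the two implications separately: the forward one by a direct path-construction in the Cayley graph, and the (deeper) reverse one by combining the connectedness hypotheses via a compactness argument on $\sph{G,H}$.

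For the reduction, observe that every character $\chi \colon G \to \RR$ with $\chi(H) = 0$ factors through $G/H$ and, since $\RR$ is torsion-free, further factors through the free-abelian quotient of $G/H$. Thus, enlarging $H$ to $H' = \pi^{-1}(T) \normaleq G$, where $\pi\colon G\onto G/H$ and $T$ is the torsion subgroup of $G/H$, leaves $\sph{G,H}$ unchanged, and since $[H':H] = |T| < \infty$, finite generation of $H$ is equivalent to that of $H'$. So we may assume $Q \isom \ZZ^r$, in which case $\sph{G,H}$ is a great $(r-1)$-subsphere of $\sph{G}$, and in particular compact.

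For the forward direction ($H$ finitely generated $\Rightarrow \sph{G,H} \subseteq \bns{G}$), fix generators $h_1,\ldots,h_s$ of $H$ together with lifts $t_1,\ldots,t_r$ of a basis of $\ZZ^r$, and take $X = \{h_i\}\cup\{t_j\}$ as generating set for $G$. Given $[\chi]\in \sph{G,H}$ and $g\in G_\chi$, decompose $g = t_1^{a_1}\cdots t_r^{a_r}\,h$ with $h\in H$. The coset $gH$ sits entirely at height $\chi(g)\geq 0$ and is connected within the Cayley graph by $H$-edges (of zero $\chi$-weight), reducing the task to joining $t_1^{a_1}\cdots t_r^{a_r}$ to $1$. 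A standard staircase in the lattice $\ZZ^r$ produces an integer-lattice path from $(a_1,\ldots,a_r)$ to the origin staying in the half-space $\{\chi\geq 0\}$ (the positive part of the lattice is path-connected for any nonzero $\chi$); lifting each lattice step to the Cayley graph of $G$ incurs only correction terms in $H$, which do not alter $\chi$-heights. Hence $G_\chi$ is connected.

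For the reverse direction, suppose $\sph{G,H} \subseteq \bns{G}$; we wish to conclude $H$ is finitely generated. The strategy is: for each $[\chi]\in \sph{G,H}$, connectedness of $G_\chi$ supplies, for each generator $x$ of $G$, a path from $1$ to $x$ within $G_\chi$; reading this path coset-by-coset extracts finitely many elements of $H$ which ``locally'' generate $H$ in an open neighborhood of $[\chi]$ in $\sph{G,H}$. The compactness of $\sph{G,H}$ then permits extracting a finite subcover and amalgamating the corresponding finite subsets into a single finite generating set of $H$. Making this precise is the principal difficulty and constitutes the main technical content of~\cite{bieri_geometric_1987}; in practice one passes through the equivalent algebraic reformulation of $\bns{G}$ (finite generation of the relevant $\ZZ G$-module over the monoid ring $\ZZ G_\chi$), whose equivalence with our geometric \cref{def:BNS invariant} is itself nontrivial, see~\cite[Theorem~3.19]{meigniez_bouts_1990} and the modern exposition~\cite{strebel_notes_2012}. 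Granting this bridge, the ``in particular'' statement is immediate: every character of $G$ factors through $G\ab$ and hence vanishes on $[G,G]$, so $\sph{G,[G,G]} = \sph{G}$, and the general criterion applied to $H = [G,G]$ yields the claim.
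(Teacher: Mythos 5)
First, note that the paper itself gives no proof of this statement: it is quoted as an external theorem of Bieri--Neumann--Strebel with a \verb|\qed| and a citation to \cite{bieri_geometric_1987}, so there is no in-paper argument to compare against. Relative to that, your write-up is strictly more informative. Your reduction to the torsion-free case is correct: characters vanishing on $H$ factor through the free-abelian part of $G/H$, so $\sph{G,H}=\sph{G,H'}$ for $H'$ the preimage of the (finite) torsion subgroup, and $H$ is finitely generated iff $H'$ is, since $[H':H]<\infty$. Your forward direction is also sound and is the standard argument: with generators of $H$ adjoined to lifts of a basis of $\ZZ^r$, each coset $gH$ lies at constant $\chi$-height and is connected by $H$-edges, and the positive half-lattice $\{v\in\ZZ^r : \chi(v)\geq 0\}$ is connected under lattice adjacency (push far in a direction of strictly positive $\chi$-value, adjust the remaining coordinates, return), so $G_\chi$ is connected. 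The ``in particular'' clause is immediate as you say.

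The genuine gap is the reverse implication, which is the substantive half of the theorem, and you do not prove it: you describe a plausible strategy (local generation near each $[\chi]$ plus compactness of $\sph{G,H}$) and then explicitly delegate the execution to \cite{bieri_geometric_1987}. As stated, the ``extract finitely many elements of $H$ which locally generate $H$ in a neighborhood of $[\chi]$'' step is not an argument --- it is precisely the content one would need to supply, and the known proofs do not proceed by naively reading paths coset-by-coset; they go through the algebraic $\Sigma$-criterion (finite generation of a $\ZZ Q$-module over the monoid ring of $G_\chi$), whose equivalence with the geometric definition used here is itself a nontrivial theorem (\cite[Theorem 3.19]{meigniez_bouts_1990}). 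So your proposal should be read as: a correct proof of the easy direction, plus an honest acknowledgement that the hard direction is the cited theorem. That is consistent with how the paper treats the result, but it is not a self-contained proof.
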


		Note that if $G$ is $H\by \ZZ$, then $\sph{G,H}=\{[\pi_H], -[\pi_H]\}$, where $\pi_H
		\colon G\twoheadrightarrow G/H\isom \ZZ$ is the canonical projection modulo $H$. In
		this case, \cref{thm:H fg iff S(H) in Sigma} tells us that
		\begin{equation}\label{eq:H fg iff pi,-pi in Sigma}
			H\text{ is }\fg \Biimp [\pi_H], -[\pi_H] \in \bns{G} \, .
		\end{equation}
		It follows an interesting characterization of $\fg \by \ZZ$ groups.

		\begin{prop}\label{antipodal}
			A finitely generated group $G$ is $\fg \by \ZZ$ if and only if its BNS invariant contains a pair of antipodal points; \ie
			\begin{equation}
				G\text{ is } \fg \by \ZZ \Biimp \exists [\chi] \in \sph{G} \text{ s.t. } [\chi], -[\chi] \in \bns{G} \,.
			\end{equation}
		\end{prop}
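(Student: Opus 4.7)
The plan is to establish both directions by leveraging the biconditional (\ref{eq:H fg iff pi,-pi in Sigma}). The forward direction is immediate: if $G$ is $H \by \ZZ$ with $H$ finitely generated, then the canonical projection $\pi_H \colon G \twoheadrightarrow G/H \isom \ZZ \hookrightarrow \RR$ is a nontrivial character of $G$, and by (\ref{eq:H fg iff pi,-pi in Sigma}) both $[\pi_H]$ and $-[\pi_H]$ lie in $\bns{G}$.

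For the converse, suppose $[\chi], -[\chi] \in \bns{G}$. My strategy is to approximate $\chi$ by an \emph{integral} character $\chi'$ (i.e., one of the form $G \twoheadrightarrow \ZZ \hookrightarrow \RR$) still satisfying $[\chi'], -[\chi'] \in \bns{G}$, so that (\ref{eq:H fg iff pi,-pi in Sigma}) can be applied to $H := \ker \chi'$. I will invoke two standard features of the Sigma invariant recorded in \cite{strebel_notes_2012}: (i) $\bns{G}$ is an open subset of $\sph{G}$; and (ii) under the identification $\sph{G} \isom S^{r-1}$ (with $r = \betti{G}$), the integral characters correspond to the rational points of the sphere and are therefore dense. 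Since the antipodal involution of $\sph{G}$ is a homeomorphism, the set $\bns{G} \cap (-\bns{G})$ is open in $\sph{G}$, and nonempty by hypothesis. Density then furnishes an integral $[\chi']$ inside it; rescaling $\chi'$ to a surjection $G \twoheadrightarrow \ZZ$, its kernel $H$ satisfies $G/H \isom \ZZ$ and $[\pi_H] = [\chi']$, so (\ref{eq:H fg iff pi,-pi in Sigma}) yields that $H$ is finitely generated. Hence $G$ is $\fg \by \ZZ$, as desired.

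No serious obstacle is anticipated: the whole argument is a short reduction to (\ref{eq:H fg iff pi,-pi in Sigma}), which is already proved in the text, via openness of $\bns{G}$ and density of integral characters---both classical properties of the BNS invariant recorded in the survey already cited. The only technical point that requires a moment of care is that the approximation of $[\chi]$ by $[\chi']$ must be performed \emph{antipodally}, but this is automatic from the continuity of the antipodal map.
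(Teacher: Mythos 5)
Your proof is correct and follows essentially the same route as the paper: both directions reduce to the Bieri--Neumann--Strebel criterion~\eqref{eq:H fg iff pi,-pi in Sigma}, and the converse is handled exactly as in the text by combining the density of rank-one (integral) character classes in $\sph{G}$ with the openness of $\bns{G}$ to find an antipodal pair of rank-one points. Your explicit observation that $\bns{G}\cap(-\bns{G})$ is open and nonempty is just a slightly more formal packaging of the same argument.
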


		\begin{proof}
			The implication to the right is clear from~\eqref{eq:H fg iff pi,-pi in Sigma}.

			The implication to the left is also clear after making sure that we can always choose
			such a character $\chi$ with cyclic image (\ie such that $\rk_{\ZZ}\chi(G)=1$). To see
			this, we observe that, given a nontrivial character $\chi\colon G\to \RR$, one has
			$\rk_{\ZZ}\chi(G)=1$ if and only if there exists $\lambda >0$ such that $\lambda\chi$
			has integral image, $\lambda \chi\colon G\twoheadrightarrow \ZZ\subseteq \RR$. In other
			words, rank-one characters correspond, precisely, to those points in the sphere $\sph{G}$
			which are projections of integral (or rational) points from $\RR^r\setminus \{ 0\}$.
			Thus, rank-one characters form a dense subset of $\sph{G}$. This, together with the
			fact that $\bns{G}$ is an open subset of $\sph{G}$ (see~\cite[Theorem
			A3.3]{strebel_notes_2012}) allows us to deduce, from the hypothesis, the existence of a
			pair of antipodal points of rank one.
		\end{proof}

		As a corollary, and using~\cref{thm:fg-by-Z is undecidable}, we obtain the main result
		in this section: the BNS invariant is not uniformly decidable (even for groups with Betti number $1$).

		\begin{thm} \label{thm:BNS is not decidable for Betti 1}
			There is no algorithm such that, given a finite presentation of a group~$G$ (with Betti number $1$),  and a character $[\chi]\in \sph{G}$, decides whether $[\chi]$ belongs to $\bns{G}$ or not.
		\end{thm}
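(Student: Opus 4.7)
The plan is to reduce the membership problem for $\fg\by\ZZ$ within $\fpa{!\by\ZZ}$---known to be undecidable by~\cref{thm:fg-by-Z is undecidable}---to BNS membership. Given an arbitrary finite presentation of a group $G$, first abelianize and apply the Classification Theorem for finitely generated abelian groups to compute $\betti{G}$ (as in~\cref{cor:MP(*-by-Z) MP(!-by-Z) decidable}). The reduction will use only inputs with $\betti{G}=1$, which, by~\cref{lem:Betti1 and rank of a free-abelian quotient}, are precisely the finitely presented unique $\ZZ$-extensions. For such a $G$, the composition $\piabf\colon G\twoheadrightarrow G\abf\isom\ZZ\subseteq\RR$ is a nontrivial character, and the character sphere consists of exactly two antipodal points, $\sph{G}=\{[\piabf],-[\piabf]\}$, both effectively computable from the given presentation.

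By~\cref{antipodal}, $G$ is $\fg\by\ZZ$ if and only if $\bns{G}$ contains some pair of antipodal characters; when $\betti{G}=1$, the only such pair is $\sph{G}$ itself, so this condition collapses to $\bns{G}=\sph{G}$, equivalently, to both $[\piabf]\in\bns{G}$ and $-[\piabf]\in\bns{G}$. Now suppose, for contradiction, that an algorithm $\mathfrak{A}$ decides BNS membership on inputs $(G,[\chi])$. Applying $\mathfrak{A}$ twice---once to $(G,[\piabf])$ and once to $(G,-[\piabf])$---and returning \YES\ iff both calls return \YES\ would yield an algorithm deciding membership in $\fg\by\ZZ$ within $\fpa{!\by\ZZ}$, contradicting~\cref{thm:fg-by-Z is undecidable}.

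There is no substantive obstacle: the heavy lifting is already done by~\cref{antipodal} and the Betti-1 collapse of $\sph{G}$ to a single antipodal pair. The only points deserving explicit verification are (i) the effective computation of the two characters $\pm[\piabf]$ from a finite presentation, which is immediate from algorithmic abelianization, and (ii) that the input format produced---a finite presentation together with an explicitly given character---fits whatever format is assumed by the hypothetical BNS decision algorithm, which is a harmless convention since $\sph{G}$ has been explicitly exhibited as two rational points in $\RR$.
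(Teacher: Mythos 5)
Your proposal is correct and follows essentially the same route as the paper: reduce the (undecidable, by \cref{thm:fg-by-Z is undecidable}) membership problem for $\fg\by\ZZ$ within $\fpa{!\by\ZZ}$ to BNS membership, using that for $\betti{G}=1$ the sphere $\sph{G}$ is the single computable antipodal pair $\{[\piabf],-[\piabf]\}$ and invoking \cref{antipodal}. The only difference is that you spell out the two calls to the hypothetical algorithm and the input-format caveat more explicitly than the paper does.
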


		\begin{proof}
			Given a finite presentation of a $! \by \ZZ$ group $G$ (so $\betti{G}=1$, and $\sph{G}$
			has two points), we can abelianize and construct the unique two characters $\pm \pi
			\colon G\twoheadrightarrow \ZZ$. Assuming the existence of an algorithm like the one in the statement, we could
			algorithmically decide whether $\pi, -\pi$ both belong to $\bns{G}$ or not, \ie
			according to~\cref{antipodal}, whether $G$ is $\fg \by \ZZ$ or not. This
			contradicts~\cref{thm:fg-by-Z is undecidable}.
		\end{proof}
		%

		We note that, in the case of a one-relator group $G = \pres{a,b}{r}$, K.~Brown provided an interesting algorithm for deciding whether a given character $\chi \colon G \to \RR$ belongs to~$\bns{G}$ or not, by looking at the sequence of $\chi$-images of the prefixes of the relation~$r$~(assumed to be in cyclically reduced form); see~\cite{brown_trees_1987}.
		Later, N.~Dunfield, J.~Button and D.~Thurston found applications of this result to $3$-manifold theory; see \cite{dunfield_alexander_2001,button_fibred_2005,dunfield_random_2006}.

		\section{Recursive enumerability of presentations} \label{sec:searching standard presentations}

		A \emph{standard} presentation of a given $\fp \by \ZZ$ group $G$ has been defined as a
		\emph{finite} presentation of the form
		\begin{equation*}
			\Pres{X,t}{R(X)\,,\, txt^{-1}=\alpha(x) \ (x\in X)},
		\end{equation*}
		where $\alpha$ is an automorphism of $\pres{X}{R}$. It is natural to ask for an
		algorithm to compute one---or all---standard presentations for such a group $G$,
		since this algorithm will provide explicit computable ways to think $G$ as a
		semidirect product (\ie an explicit base group $H = \pres{X}{R}$, and an explicit automorphism
		$\alpha$, such that $G \isom H \rtimes_{\alpha} \ZZ$). 

		We have
		seen that membership for $\fp \by \ZZ$ is undecidable
		(\cref{cor:membership for fg-by-Z and fp-by-Z is undecidable}). However, given a finite
		presentation for a $\fp \by \ZZ$ group $G$, we can use Tietze transformations to obtain
		a recursive enumeration of all the finite presentations for $G$. In the following
		proposition we provide a (brute force) filtering process which extracts from it a
		recursive enumeration of all the standard ones.

		\begin{prop}\label{prop:enumerate standard presentations of fg-by-Z}
			Given a finite presentation of a $\fp \by \ZZ$ group $G$, the set of standard
			presentations for $G$ is recursively enumerable.
		\end{prop}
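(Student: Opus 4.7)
The plan is a brute-force dovetailing of semi-decidable verifications. Starting from the given finite presentation $P_0$ of $G$, we recursively enumerate tuples $(X, R, \alpha, \beta, T)$ consisting of: a finite alphabet $X$, a finite set $R$ of words in the free group $F(X)$, two set-theoretic maps $\alpha, \beta \colon X \to F(X)$, and a finite sequence $T$ of Tietze transformations. Such a tuple encodes a candidate standard presentation $P(X,R,\alpha) = \Pres{X, t}{R,\, txt^{-1}=\alpha(x) \ (x \in X)}$, a candidate inverse $\beta$ for $\alpha$ as an automorphism of $H := \pres{X}{R}$, and a candidate witness $T$ of Tietze-equivalence between $P(X,R,\alpha)$ and $P_0$.

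For each such tuple, we attempt in parallel the following checks: (i)~that $\alpha(r)$ lies in the normal closure of $R$ in $F(X)$ for every $r \in R$ (an r.e.\ condition, since we can enumerate that normal closure); (ii)~the analogous condition for $\beta$; (iii)~that for every $x \in X$, both $\alpha(\beta(x)) \, x^{-1}$ and $\beta(\alpha(x)) \, x^{-1}$ lie in the normal closure of $R$ in $F(X)$ (again r.e.); and (iv)~that $T$ transforms $P_0$ into $P(X,R,\alpha)$ (a finite, decidable check). Conditions (i) and (ii) certify that $\alpha$ and $\beta$ descend to endomorphisms of $H$; condition (iii) certifies that these endomorphisms are mutually inverse, so in particular $\alpha \in \Aut(H)$; and condition (iv) certifies that $P(X,R,\alpha)$ presents $G$. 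Whenever, via dovetailing, all four checks eventually succeed for a given tuple, we output $P(X,R,\alpha)$; this produces a recursively enumerable list of standard presentations of $G$.

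It remains to see that every standard presentation of $G$ appears on this list. Given $P = \Pres{X,t}{R,\, txt^{-1}=\alpha(x)}$ with $\alpha \in \Aut(H)$, we may lift $\alpha^{-1}$ to any set-theoretic map $\beta \colon X \to F(X)$ representing it; then conditions (i)--(iii) all hold and are eventually certified. Since $P$ and $P_0$ both present $G$, by Tietze's theorem there is a finite sequence $T$ taking $P_0$ to $P$, establishing (iv). The main obstacle is that deciding whether a map $\alpha \colon X \to F(X)$ induces an automorphism of $H$ is undecidable in general (even the word problem for $H$ may be unsolvable); the work-around is to refrain from deciding this property and instead merely semi-decide it, by simultaneously guessing an inverse $\beta$ and verifying, via enumeration of the normal closure of $R$ in $F(X)$, that both compositions equal the identity on generators.
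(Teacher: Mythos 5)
Your proposal is correct and follows essentially the same strategy as the paper's proof: enumerate candidate standard presentations reachable from the given one by Tietze transformations, and semi-decide that the defining map is an automorphism by simultaneously guessing an inverse and certifying both claims via enumeration of the normal closure of $R$, all dovetailed in parallel. The only difference is organizational (you package the inverse $\beta$ and the Tietze witness $T$ into the enumerated tuple, while the paper runs a recognizing subprocess on each Tietze-derived presentation that searches for the inverse internally), which changes nothing of substance.
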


		\begin{proof}
			Let $P$ be the finite presentation given (of a $\fp \by \ZZ$ group $G$). We will start enumerating all finite presentations of $G$ by successively applying to $P$ chains of elementary Tietze transformations in all possible ways. This process is recursive and eventually visits all finite presentations for $G$ (all standard presentations among them).

			Now, it will be enough to construct a recognizing subprocess $\mathfrak{S}$ which,
			applied to any finite presentation $P'$ for $G$, if $P'$ is in standard form it halts and returns $P'$, and if not it halts returning ``\NO, $P'$ is not standard", or works forever. Having $\mathfrak{S}$, we can keep following the enumeration of all finite presentations $P'$ for $G$ via Tieze transformations and, for each one, start and run in parallel the recognizing process $\mathfrak{S}$ for it; we maintain all of them running in parallel (some of them possibly forever), and at the same time we keep opening new ones, simultaneously aware of the possible halts (each one killing one of the parallel processes and possibly outputting a genuine standard presentation for $G$).

			So, we are reduced to design such a recognizing process $\mathfrak{S}$. For a given finite presentation $P'$ of $G$, let us perform the following steps:
			\begin{enumerate}[(i)]
				\item Check whether $P'$ matches the scheme
				      \begin{equation}\label{eq:candidate presentation}
				      	\Pres{X,t}{R \, , \, t x_i t^{-1} = w_i \,\,\, (x_i \in X)},
				      \end{equation}
				      where $X=\{x_1,\ \ldots ,x_n\}$ and $R=\{ r_1, \ldots ,r_m \}$ are finite, and the
				      $w_i$'s and $r_j$'s are all (reduced) words on $X$. If $P$ does not match this
				      scheme, then halt and answer ``\NO, $P'$ is not standard"; otherwise
				      go to the next step.
				\item With $P'$ being of the form~\eqref{eq:candidate presentation}, consider the
				      group $H=\pres{X}{R}=F(X)/\normalcl{R}$ and let us try to check whether the map
				      $x_i \mapsto w_i$ extends to a well-defined homomorphism $\alpha\colon H\to H$.
				      For this, we must check whether $\alpha(r_j)=1$ in $H$ or not (but caution! we
				      cannot assume in general a solution to the word problem for $H$). Enumerate and
				      reduce the elements in $\normalcl{R}$ and check whether, for every relator
				      $r_j(x_1,\ldots,x_n)\in R$, the word $r_j(w_1,\ldots,w_n)$ appears in the
				      enumeration. If this happens for all $j=1,\ldots ,m$, then go to the next step
				      (with $P'$ being of the form
				      \begin{equation}\label{output-step2}
				      	\Pres{X,t}{R \, , \, t x_i t^{-1} = \alpha(x_i) \,\,\, (x_i \in X)},
				      \end{equation}
				      where $\alpha \in \End(F(X)/\!\normalcl{R})$ ).
				\item With $P'$ being of the form~\eqref{output-step2}, let us try to check now
				      whether $\alpha$ is bijective, looking by brute force for its eventual inverse:
				      enumerate all possible $n$-tuples $(v_1, \ldots ,v_n)$ of reduced words on $X$
				      and for each one, check simultaneously whether $r_j(v_1, \ldots ,v_n)=1$ in $H$
				      for all $j=1,\ldots ,m$ (\ie whether $\beta\colon H\to H$, $x_i\mapsto v_i$ is a
				      well-defined endomorphism of $H$) and whether $v_i(w_1,\ldots ,w_n)=1$ and
				      $w_i(v_1,\ldots ,v_n)=1$ in $H$, for all $i=1,\ldots ,n$ (\ie whether
				      $\alpha\beta=\beta\alpha=\id$ and so $\alpha\in \Aut(H)$). We do this in a
				      similar way as in the previous step: enumerate the normal closure $\normalcl{R}$
				      (an infinite process) and wait until all the mentioned words appear in the
				      enumeration. When this happens (if so), halt the process and output $P'$ as a
				      standard presentation for $G$.
			\end{enumerate}

			For any given $P'$, step~(i) finishes in finite time and either rejects $P'$, or
			recognizes that $P'$ is of the form~\eqref{eq:candidate presentation} and sends the
			control to step~(ii). Now step~(ii) either works forever, or it halts recognizing $P'$
			of the form~\eqref{output-step2} and sending the control to step~(iii) (note that, by
			construction, it is guaranteed that if $P'$ is really in standard form then $\alpha$ is
			a well-defined endomorphism of $H$ and step~(ii) will eventually halt in finite time).
			Finally, the same happens in step~(iii): it either works forever, or it halts
			recognizing that $P'$ is in standard form (again by construction, it is guaranteed that if $P'$ is really in standard form, then $\alpha$ is bijective and step~(iii) will
			eventually catch its inverse and halt in finite time).

			Process $\mathfrak{S}$ is built, and this concludes the proof.
		\end{proof}

		We remark that we can apply the previous algorithm to an arbitrary finite presentation $P$ of a (arbitrary) group $G$: if $G$ is a $\fp\by\ZZ$ group the process will enumerate all its standard presentations, while if $G$ is not $\fp\by\ZZ$ the process will work forever outputting nothing. So, we can successively apply---in parallel---the previous
		algorithm to any enumerable family~$\HH$ of presentations to obtain an enumeration of all standard $\fp\by\ZZ$ presentations within $\HH$. Taking~$\HH = \GG_{\fp}$, we get an enumeration of all standard $\fp\by\ZZ$ presentations.

		\begin{cor}
			The set of standard presentations of $\fp\by\ZZ$ groups is recursively enumerable. \qed
		\end{cor}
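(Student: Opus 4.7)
The plan is to apply the recognizing subprocess $\mathfrak{S}$ built in the proof of \cref{prop:enumerate standard presentations of fg-by-Z} to the recursively enumerable family $\GG_\fp$ of all finite group presentations. Indeed, as already pointed out in the paragraph preceding the corollary, $\mathfrak{S}$ is meaningful on an arbitrary finite presentation $P'$, not only on those known a priori to present a $\fp\by\ZZ$ group: step~(i) halts in finite time and either rejects $P'$ or passes it on to steps~(ii)--(iii), which in turn halt and output $P'$ precisely when $P'$ is in standard form (and in that case the presented group is automatically $\fp\by\ZZ$), and otherwise work forever.

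First, I would fix any recursive enumeration $P_1, P_2, \ldots$ of $\GG_\fp$; this is straightforward, since a finite presentation is just a finite tuple of generator symbols together with a finite list of reduced words over them. Then I would dovetail in the standard way: at stage $n$, launch a new copy of $\mathfrak{S}$ on input $P_n$, and perform one more step of computation on each of the already-launched instances $\mathfrak{S}(P_1), \ldots, \mathfrak{S}(P_{n-1})$. Whenever some instance $\mathfrak{S}(P_i)$ halts and returns $P_i$, we append $P_i$ to the output enumeration (and kill that thread); instances that answer ``\NO'' or that keep running forever contribute nothing.

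By the properties of $\mathfrak{S}$ recalled above, every standard presentation of a $\fp\by\ZZ$ group is some $P_i$ and will eventually be emitted, while a non-standard presentation is never output (either rejected in step~(i), or never accepted in steps~(ii)--(iii)). Hence the procedure recursively enumerates exactly the set of standard presentations of $\fp\by\ZZ$ groups. There is no substantial obstacle beyond the routine dovetailing bookkeeping, so the corollary follows directly from \cref{prop:enumerate standard presentations of fg-by-Z}.
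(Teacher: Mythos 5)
Your proposal is correct and matches the paper's argument in all essentials: both dovetail the recognizing subprocess $\mathfrak{S}$ over a recursive enumeration of $\GG_{\fp}$, relying on the fact that $\mathfrak{S}$ outputs a presentation if and only if it is standard and otherwise rejects or runs forever. The only (harmless, slightly cleaner) difference is that you feed each finite presentation directly to $\mathfrak{S}$, whereas the paper routes each one through the full algorithm of \cref{prop:enumerate standard presentations of fg-by-Z}, Tietze enumeration included.
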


		Applying all possible Tietze transformations to every standard presentation outputted by this procedure, we obtain an enumeration of all finite presentations of $\fp\by\ZZ$ groups. This enriches \cref{cor:membership for fg-by-Z and fp-by-Z is undecidable} in the following way.

		\begin{cor}
			The set of finite presentations of $\fp\by\ZZ$ groups is recursively enumerable but not recursive. \qed
		\end{cor}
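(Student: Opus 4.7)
The plan is to derive both halves of the statement directly from results already in hand: the recursive enumerability will follow from the preceding corollary on standard presentations together with Tietze's theorem, while the non-recursiveness will follow from \cref{cor:membership for fg-by-Z and fp-by-Z is undecidable}.

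For the enumerability part, I would start with the algorithm $\mathfrak{E}_{\St}$ supplied by the preceding corollary, which recursively enumerates all standard presentations of $\fp\by\ZZ$ groups. Every $\fp\by\ZZ$ group $G$ admits at least one standard presentation (by the discussion in \cref{sec:Z-extensions}, specifically~\eqref{eq:presentation of a Z-extension}), so $\mathfrak{E}_{\St}$ hits every such $G$ at least once. I would then combine this with Tietze's classical theorem: two finite presentations define isomorphic groups if and only if they are linked by a finite chain of elementary Tietze transformations. Concretely, I would dovetail: at stage $n$, the algorithm runs $n$ steps of $\mathfrak{E}_{\St}$ and, for each standard presentation $P_i$ output so far, enumerates all finite presentations obtainable from $P_i$ by chains of at most $n$ elementary Tietze transformations. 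The union of what is produced is exactly the set of finite presentations whose underlying group is $\fp\by\ZZ$, so this set is recursively enumerable.

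For the non-recursiveness part, I would argue by contradiction. Suppose the set $\fpa{\fp\by\ZZ}$ of finite presentations of $\fp\by\ZZ$ groups were recursive. Then, given any finite presentation $P$ of a group $G$, we could algorithmically decide whether $P \in \fpa{\fp\by\ZZ}$, that is, whether $G$ is $\fp\by\ZZ$. This is exactly the membership problem $\MP(\fp\by\ZZ)$, which was shown to be undecidable in \cref{cor:membership for fg-by-Z and fp-by-Z is undecidable}, a contradiction.

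There is no real obstacle here; the only point that deserves a brief comment is making sure the Tietze dovetailing genuinely covers every finite presentation of every $\fp\by\ZZ$ group, which is immediate from Tietze's theorem once we know at least one standard presentation of $G$ is enumerated by $\mathfrak{E}_{\St}$.
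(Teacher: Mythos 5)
Your proof is correct and follows essentially the same route as the paper: recursive enumerability by applying Tietze transformations (dovetailed) to the recursively enumerable set of standard presentations, and non-recursiveness by reduction to the undecidable membership problem $\MP(\fp\by\ZZ)$ from \cref{cor:membership for fg-by-Z and fp-by-Z is undecidable}. No gaps.
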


		\section{On the isomorphism problem for unique $\ZZ$-extensions}\label{sec:isom}

		Let us consider now problems of the first kind mentioned in \cref{recog}: isomorphism problems within families of the form $\fpa{\propi \by \ZZ}$.

		\medskip

		To begin with, we combine \cref{lem:*K semidirect Z = K*Z} with the following one to see that a $\ZZ$-extension can have non isomorphic base groups. The proof is just a direct writing of the corresponding presentations.

		\begin{lem}\label{lem:rtimes times = times rtimes}
			Let $H$ be an arbitrary group, and $\phi \in \Aut(H)$. Then,
			\begin{equation} \label{eq:rtimes times = times rtimes}
				(H\rtimes_{\phi} \ZZ)\times \ZZ  \, \isom  \, (H\times \ZZ )\rtimes_{\Phi} \ZZ \, ,
			\end{equation}
			where $\Phi
			\in \Aut(H\times \ZZ)$ is defined by
			$(h,t)\mapsto (\phi(h), t)$. \qed
		\end{lem}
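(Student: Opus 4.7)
The plan is a direct presentation-level calculation. First, note that the formula $\Phi(h,t) = (\phi(h), t)$ realizes $\Phi = \phi \times \id_{\ZZ}$, which is manifestly an automorphism of $H \times \ZZ$ (being a direct product of automorphisms); so the right-hand side of \eqref{eq:rtimes times = times rtimes} is well-defined. To prove the isomorphism, I would fix an arbitrary presentation $H = \pres{X}{R}$ and write down presentations for both sides using the standard form \eqref{eq:presentation of a Z-extension}, then observe that a relabeling of generators identifies them.

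Concretely, for the left-hand side I would present $H \rtimes_\phi \ZZ$ by adjoining a new generator $s$ together with relations $sxs^{-1} = \phi(x)$ for $x \in X$, and then take the direct product with $\pres{u}{-}$ by adjoining $u$ with commutation relations $[u,s]$ and $[u,x]$ for all $x \in X$. For the right-hand side, I would first present $H \times \ZZ$ by adjoining $t$ with $[t,x]$ for $x \in X$, and then form the semidirect product with $\pres{r}{-}$ via $\Phi$ by adjoining $r$ with relations $rxr^{-1} = \phi(x)$ for $x \in X$ and $rtr^{-1} = t$.

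The two resulting presentations are word-for-word identical under the renaming $s \leftrightarrow r$, $u \leftrightarrow t$ (keeping $X$ pointwise fixed): the relation $rtr^{-1} = t$ on the right corresponds to the commutator relation $[u,s]$ on the left, the relations $rxr^{-1} = \phi(x)$ correspond to $sxs^{-1} = \phi(x)$, and the commutation relations $[t,x]$ match $[u,x]$. Hence the relabeling extends to a mutually inverse pair of homomorphisms between the two groups, proving the isomorphism.

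There is essentially no obstacle here; the argument is pure bookkeeping of generators and relations. The only substantive check is that $\Phi$ is a well-defined automorphism of $H \times \ZZ$, which is immediate, and that the commutators $[u,s]$ and $rtr^{-1}t^{-1}$ encode the same relation, which is a triviality.
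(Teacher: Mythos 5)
Your proof is correct and takes exactly the approach the paper intends: the paper gives no written argument beyond remarking that ``the proof is just a direct writing of the corresponding presentations,'' and your presentation-level bookkeeping with the renaming $s\leftrightarrow r$, $u\leftrightarrow t$ is precisely that computation carried out in full.
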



		\begin{cor}\label{cor:nonisomorphic base groups}
			Isomorphic $\ZZ$-extensions can have nonisomorphic base groups, even of different type. More precisely, there exist a finitely presented group $H$, a non finitely generated group $H'$, and automorphisms $\alpha \in \Aut(H)$ and $\beta\in \Aut(H')$, such that $H \rtimes_{\alpha}\ZZ \simeq H'\rtimes_{\beta}\ZZ$. In particular, $H\rtimes \ZZ \, \isom \, H'\rtimes \ZZ \nImp H\isom  H'$.
		\end{cor}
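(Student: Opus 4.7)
The plan is to build the example by combining \cref{lem:*K semidirect Z = K*Z} and \cref{lem:rtimes times = times rtimes}, using a free product of the form $K\freeprod \ZZ$ as the key intermediate object. The witness group $G$ will be obtained by multiplying such a free product by an extra $\ZZ$-factor, and the same group $G$ will be expressed as a semidirect product in two strikingly different ways.

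First, I would choose any nontrivial finitely presented perfect group $K$ (for instance $K = A_5$, or any finitely presented simple group); by \cref{cor:K*Z is fg-by-Z iff K=1} the group $K\freeprod \ZZ$ is a unique $\ZZ$-extension whose (unique) base group $\Freeprod_{i\in \ZZ} K$ is not finitely generated. Set
\begin{equation*}
G \,=\, (K\freeprod \ZZ)\times \ZZ.
\end{equation*}
Since $K$ and $\ZZ$ are both finitely presented, so is $H \coloneqq K\freeprod \ZZ$, and taking $\alpha = \id_H$ we obviously have $G \isom H \rtimes_{\alpha} \ZZ$ — a $\ZZ$-extension of a finitely presented group.

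On the other hand, \cref{lem:*K semidirect Z = K*Z} gives the identification $K\freeprod \ZZ \isom H'' \rtimes_{\tau} \ZZ$, where $H'' = \Freeprod_{i\in \ZZ} K$ and $\tau$ is the translation automorphism from \eqref{eq:translation automorphism}. Plugging this into $G$ and then applying \cref{lem:rtimes times = times rtimes} with $H \leftarrow H''$ and $\phi \leftarrow \tau$, we obtain
\begin{equation*}
G \,\isom\, (H'' \rtimes_{\tau} \ZZ) \times \ZZ \,\isom\, (H'' \times \ZZ) \rtimes_{\Phi} \ZZ,
\end{equation*}
where $\Phi(h,t) = (\tau(h),t)$. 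Setting $H' \coloneqq H'' \times \ZZ$ and $\beta \coloneqq \Phi$, this expresses $G$ as a $\ZZ$-extension of the group $H'$, which is not finitely generated (its direct summand $H''$ already fails to be, by the choice of $K$ and \cref{cor:K*Z is fg-by-Z iff K=1}).

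Combining both descriptions yields $H \rtimes_{\alpha} \ZZ \isom G \isom H' \rtimes_{\beta} \ZZ$, with $H$ finitely presented and $H'$ not even finitely generated, so in particular $H \nisom H'$. Everything is routine given the two preceding lemmas; the only mild obstacle is bookkeeping the identifications to make sure the conjugating automorphism on the non-finitely-generated side is the explicit $\Phi$ provided by \cref{lem:rtimes times = times rtimes}, but no further calculation is required.
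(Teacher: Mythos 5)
Your proof is correct and follows essentially the same route as the paper: both express $(K\freeprod\ZZ)\times\ZZ$ once as $(K\freeprod\ZZ)\rtimes_{\id}\ZZ$ and once, via \cref{lem:*K semidirect Z = K*Z} combined with \cref{lem:rtimes times = times rtimes}, as $\bigl(\bigl(\Freeprod_{i\in\ZZ}K\bigr)\times\ZZ\bigr)\rtimes_{\Phi}\ZZ$. The only (harmless) difference is that you restrict $K$ to be perfect in order to invoke \cref{cor:K*Z is fg-by-Z iff K=1}, whereas the paper takes any nontrivial finitely presented $K$, since $\Freeprod_{i\in\ZZ}K$ fails to be finitely generated for any nontrivial $K$.
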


		\begin{proof}
			Let $K$ be any nontrivial finitely presented group. Consider $H=K\freeprod \ZZ$,
			which is also finitely presented, and $H'=\left( \Freeprod_{i\in \ZZ} K \right)
			\times \ZZ$, which is not finitely generated. Combining~\eqref{eq:*K semidirect Z =
				K*Z} and~\eqref{eq:rtimes times = times rtimes}, we get
			\begin{equation*}
				H\times \ZZ =
				\left(K \freeprod \ZZ \right) \times \ZZ \ \isom \,
				\left(\left( \Freeprod_{i\in \ZZ} K \right) \rtimes_\tau \ZZ \right) \times \ZZ \ \isom  \
				\left(\left( \Freeprod_{i\in \ZZ} K \right) \times \ZZ \right) \rtimes_T \ZZ
				=H' \rtimes_T \ZZ \,,
			\end{equation*}
			where $\tau \in \Aut (\Freeprod_{i\in \ZZ}\, K)$ is the automorphism
			\eqref{eq:translation automorphism} defined in~\cref{lem:*K semidirect Z = K*Z}, and $T
			\in \Aut(H')$ the corresponding one according to~\cref{lem:rtimes times = times
				rtimes}. The result follows taking $\alpha =\id_H$ and~$\beta =T$.
		\end{proof}

		So, there is considerable flexibility in describing cyclic extensions as semidirect
		products. Even fixing the base group, this flexibility persists within the possible
		defining automorphisms. For example,
		one can easily see that $H\rtimes_{\gamma} \ZZ\simeq H\times \ZZ$, for every inner
		automorphism $\gamma \in \Inn(H)$. A bit more generally, the following is a folklore
		lemma which is straightforward to prove (see~\cite{bogopolski_automorphism_2007}).

		\begin{lem}\label{lem:suf cond for isomorphic Z-extensions}
			Let $H$ be an arbitrary group, and let $\alpha, \beta \in \Aut (H)$. If $\beta =\gamma
			\xi \alpha^{\pm 1}\xi^{-1}$ for some $\gamma \in \Inn H$ and some $\xi \in \Aut (H)$,
			then $H\rtimes_{\alpha} \ZZ \isom H\rtimes_{\beta} \ZZ$. \qed
		\end{lem}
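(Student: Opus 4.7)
The plan is to decompose the hypothesis into three elementary transformations of the defining automorphism, each of which preserves the isomorphism type of the corresponding semidirect product. Namely, I would treat separately the cases (a) $\beta = \alpha^{-1}$, (b) $\beta = \xi\alpha\xi^{-1}$ with $\xi\in \Aut(H)$, and (c) $\beta = \gamma\alpha$ with $\gamma \in \Inn(H)$. Since any $\beta$ of the form $\gamma\,(\xi\alpha^{\pm 1}\xi^{-1})$ can be produced from $\alpha$ by at most one application of (a), followed by one of (b), followed by one of (c), and since the isomorphism relation is transitive, proving each of (a)--(c) in isolation suffices.

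For each case I would write an explicit candidate isomorphism starting from the standard presentation~\eqref{eq:presentation of a Z-extension} of $H \rtimes_\alpha \ZZ$, and check that each defining relator is sent to a consequence of the relators of the target presentation. My candidates are: for (a), send $h \mapsto h$ and $t \mapsto t^{-1}$, so that $txt^{-1} = \alpha(x)$ becomes $t^{-1} x t = \alpha(x)$, \ie $txt^{-1} = \alpha^{-1}(x)$; for (b), send $h \mapsto \xi(h)$ and $t \mapsto t$, so that $txt^{-1} = \alpha(x)$ becomes $t\,\xi(x)\,t^{-1} = \xi\alpha(x) = (\xi\alpha\xi^{-1})(\xi(x))$; for (c), writing $\gamma = \gamma_k$ for some $k \in H$, send $h \mapsto h$ and $t \mapsto kt$, so that conjugation by $kt$ on $H$ yields $\gamma_k \circ \alpha$. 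In each case the candidate has a visible inverse of the same form, so it is a genuine isomorphism.

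The main (very modest) point of attention is the order of composition and the choice of sign: one must verify that gluing the three moves together really produces $\gamma \xi \alpha^{\pm 1} \xi^{-1}$, and not a variant such as $\xi^{-1} \alpha \xi \gamma$ or $\alpha \gamma$ in place of $\gamma \alpha$. Apart from this routine bookkeeping I do not expect any serious obstacle, and the whole argument should fit in a few lines (which is why the statement is advertised as folklore and as being straightforward to prove).
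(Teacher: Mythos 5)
Your proposal is correct; note that the paper itself gives no proof of this lemma (it is stated as folklore with a \qed and a pointer to~\cite{bogopolski_automorphism_2007}), so there is nothing to compare against, but your three-move decomposition with explicit maps is exactly the standard argument the authors are implicitly invoking. The one point to fix is the direction of the map in case (c): with the source being $H\rtimes_{\alpha}\ZZ$ you need $t\mapsto k^{-1}t$ (so that $(k^{-1}t)h(k^{-1}t)^{-1}=\gamma_{k^{-1}}\gamma_k\alpha(h)=\alpha(h)$ in the target $H\rtimes_{\gamma_k\alpha}\ZZ$), or equivalently $t\mapsto kt$ with the roles of source and target reversed --- precisely the bookkeeping you flagged, and harmless since isomorphism is symmetric.
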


		The existence of such $\gamma \in \Inn H$ and $\xi \in \Aut (H)$ is exactly the same as
		$[\beta]$ being conjugate to $[\alpha]^{\pm 1}$ in $\Out (H)$. This condition turns out
		to have some protagonism along the rest of the paper, making convenient to have a
		general shorthand terminology for it.

		\begin{defn}\label{def:semi-conjugacy}
			Let $G$ be an arbitrary group. A pair of elements $g,h \in G$ are said to be
			\defin{semi-conjugate} if $g$ is conjugate to either $h$ or $h^{-1}$; we denote this situation by~$g\sim h^{\pm 1}$.
		\end{defn}

		With this terminology, \cref{lem:suf cond for isomorphic Z-extensions} states that, when the defining automorphisms $\alpha,\beta \in \Aut (H)$ are semi-conjugate in $\Out (H)$, then the corresponding semidirect products $H\rtimes_{\alpha} \ZZ $ and $H\rtimes_{\beta}\ZZ$
		are isomorphic. Note also the following necessary condition: by~\cref{prop:deranged characterization}, $\alpha$ is deranged if and only if $\betti{H\rtimes_{\alpha}\ZZ }=1$ so, in order for $H\rtimes_{\alpha}\ZZ$ and $H\rtimes_{\beta}\ZZ$ to be isomorphic, a necessary condition is that $\alpha$ and $\beta$ are either both simultaneously deranged, or both not deranged.

		Apart from this, not much is known in general about characterizing or deciding when two $\ZZ$-extensions of a given group are isomorphic. In~\cite{bogopolski_automorphism_2007}, Bogopolski--Martino--Ventura proved that, when the base group $H$ is free of rank $2$, the converse to~\cref{lem:suf cond for isomorphic Z-extensions} also holds, providing a quite neat characterization of isomorphism within the family of $F_2 \by \ZZ$ extensions and (using the decidability of the conjugacy problem in $\Out (F_2)$, see~\cite{bogopolski_classification_2000}) a positive solution to the isomorphism problem
		within this family of groups.

		\begin{thm}[Bogopolski--Martino--Ventura,
			\cite{bogopolski_automorphism_2007}]\label{F2} Let $\alpha, \beta \in \Aut (F_2)$.
			Then,
			\begin{equation}\label{eq:isomorphic F2-by-Z}
				F_2 \rtimes_\alpha \ZZ \isom F_2 \rtimes_\beta \ZZ \Biimp [\alpha] \text{ and } [\beta] \text{ are semi-conjugate in } \Out (F_2).
			\end{equation}
			In particular, the isomorphism problem within the family $F_2 \by \ZZ$ is decidable.
			\qed
		\end{thm}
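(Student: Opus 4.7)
The $(\Leftarrow)$ direction of \eqref{eq:isomorphic F2-by-Z} is immediate from \cref{lem:suf cond for isomorphic Z-extensions} (with $H=F_2$), so all the content is in the forward implication. My plan is to reduce an arbitrary isomorphism $\Phi\colon F_2\rtimes_\alpha \ZZ \to F_2\rtimes_\beta \ZZ$ to one that preserves the base subgroup, and then extract the semi-conjugacy from the defining relation.

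\textbf{Reducing to a base-preserving isomorphism.} The main task is to prove that, after composing $\Phi$ with a suitable automorphism of source or target, one has $\Phi(F_2)=F_2$. I would split into cases. If $\alpha$ is deranged, then by \cref{prop:deranged characterization} the base subgroup $F_2$ is fully characteristic in $F_2\rtimes_\alpha\ZZ$ (and similarly on the target), so $\Phi(F_2)=F_2$ automatically. If $\alpha$ is non-deranged, then $\betti{F_2\rtimes_\alpha\ZZ}\geq 2$ and there are several normal subgroups with quotient $\ZZ$; here one must use the structure of $\Aut(F_2)\simeq \GL_2(\ZZ)$ together with the eigenstructure of $\alpha\ab\in \GL_2(\ZZ)$ to parameterize all base subgroups and show that any two of them are related by an automorphism of the ambient group. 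This non-deranged subcase is the main obstacle in the proof, and is precisely where the rank-$2$ hypothesis is essential.

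\textbf{Extracting the semi-conjugacy.} Once $\Phi(F_2)=F_2$, set $\phi=\Phi|_{F_2}\in \Aut(F_2)$ and write $\Phi(t) = ht^{\epsilon}$ for some $h\in F_2$ and $\epsilon\in\{\pm 1\}$. Applying $\Phi$ to the defining relation $txt^{-1}=\alpha(x)$ for $x\in F_2$, and using $t^{\epsilon}yt^{-\epsilon}=\beta^{\epsilon}(y)$ in the target, yields
\[
h\,\beta^{\epsilon}(\phi(x))\,h^{-1} \,=\, \phi(\alpha(x)),
\]
equivalent to the identity $\gamma_h\circ \beta^{\epsilon}\circ\phi \,=\, \phi\circ\alpha$ in $\Aut(F_2)$, where $\gamma_h$ denotes conjugation by $h$. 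Passing to $\Out(F_2)$, we obtain $[\beta]^{\epsilon}=[\phi]\,[\alpha]\,[\phi]^{-1}$, i.e., $[\alpha]$ and $[\beta]$ are semi-conjugate in $\Out(F_2)$.

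\textbf{Decidability.} For the isomorphism problem within $F_2\by\ZZ$, given two finite presentations in this family I would run \cref{prop:enumerate standard presentations of fg-by-Z} in parallel on both inputs and wait until a standard presentation of the form $\pres{X,t}{tx_it^{-1}=\alpha(x_i)}$ with $|X|=2$ and empty base relator set appears for each (such a presentation exists because each input is $F_2\by\ZZ$, and is detected by simple syntactic inspection). From these, read off the defining outer automorphisms $[\alpha],[\beta]\in\Out(F_2)$, and then invoke the decidability of the conjugacy problem in $\Out(F_2)\simeq \GL_2(\ZZ)$ (from~\cite{bogopolski_classification_2000}) to test whether $[\beta]$ is conjugate to $[\alpha]$ or to $[\alpha]^{-1}$. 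By the equivalence just established, this decides isomorphism.
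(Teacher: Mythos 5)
This theorem is not proved in the paper at all: it is imported from \cite{bogopolski_automorphism_2007} and stated with a closing qed, so the only source to compare against is that reference. Your skeleton is the right one, and three of its four pieces are sound: the $(\Leftarrow)$ direction via \cref{lem:suf cond for isomorphic Z-extensions}; the extraction of the identity $\gamma_h\beta^{\epsilon}\phi=\phi\alpha$ (hence semi-conjugacy of $[\alpha]$ and $[\beta]$ in $\Out(F_2)$) from a \emph{base-preserving} isomorphism, which is exactly the computation of \cref{prop:isomorphism between G_deranged}; and the decidability argument via \cref{prop:enumerate standard presentations of fg-by-Z} together with the conjugacy problem in $\Out(F_2)\simeq\GL_2(\ZZ)$.

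The genuine gap is that the entire mathematical content of the forward implication sits in the step you yourself label ``the main obstacle'' and then do not carry out: showing that when $\alpha$ is not deranged, an arbitrary isomorphism $F_2\rtimes_\alpha\ZZ\to F_2\rtimes_\beta\ZZ$ can be corrected to one preserving the base $F_2$. Saying that one should ``parameterize all base subgroups and show that any two of them are related by an automorphism of the ambient group'' is a restatement of what must be proved, not a proof; it is moreover not obvious that a suitable base subgroup of a non-deranged extension is even free of rank $2$, nor that the eigenstructure of $\alpha\ab$ alone controls the situation. That this step cannot be routine is shown by the Dicks example quoted immediately after the theorem: for $F_3$ the analogous statement \emph{fails} (the two automorphisms there abelianize to matrices of determinants $1$ and $-1$, so they are not semi-conjugate, yet the extensions are isomorphic), so any correct argument must use the rank-$2$ hypothesis in an essential way; in \cite{bogopolski_automorphism_2007} this occupies the bulk of the paper and rests on a detailed analysis tied to $\Out(F_2)\simeq\GL_2(\ZZ)$. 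As written, your proposal establishes the theorem only for deranged $\alpha,\beta$ --- which is \cref{cor:isomorphic deranged extensions}, valid over any base group and not specific to $F_2$ --- and leaves the rank-$2$-specific case open.
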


		However, in this same paper, a counterexample was given (suggested by W.~Dicks) to see
		that this equivalence is not true for free groups of higher rank, where the situation is, in general, much more complicated. The example is the following: consider the free group
		of rank 3, $F_3 = \pres{a,b,c}{\,}$, and the automorphisms $\alpha\colon F_3 \to F_3$
		given by $a\mapsto b\mapsto c\mapsto b^{-1}ab^{-2}c^3$, and $\beta\colon F_3\to F_3$ by
		$a\mapsto b\mapsto c\mapsto a^{-1}b^2cb^{-1}$. It happens that $F_3 \rtimes_{\alpha}\ZZ
		\simeq F_3 \rtimes_{\beta} \ZZ$ (see~\cite{bogopolski_automorphism_2007} for details),
		while $\alpha$ and $\beta$ are not semi-conjugate in $\Out (F_3)$ because they abelianize
		to two $3\times 3$ matrices of determinants, respectively, 1 and -1. As far as we know,
		the isomorphism problem for $F_r \by \ZZ$ groups is open for $r\geq 3$.

		\medskip

		The goal of the present section is to prove that an equivalence like \eqref{eq:isomorphic F2-by-Z} still holds, but under kind of an orthogonal condition: rather than restricting the base group to be $F_2$, we will leave $H$ arbitrary finitely generated, and impose conditions on the defining automorphism. Note that such an equivalence reduces the isomorphism problem in the family of restricted extensions, to the conjugacy problem in the corresponding family of outer automorphisms of the base group (or even to a weaker problem, if semi-conjugacy is not algorithmically equivalent to conjugacy).

		This context strongly suggests defining the semi-conjugacy problem much in the same way
		that the standard conjugacy problem, and asking for the relationship between them. We
		state both problems together in order to make the comparison clear.

		\begin{defn} \label{def:semi-conjugacy problem}
			Let $\pres{X}{R}$ be a finite presentation for a group $G$. Then:
			\begin{itemize}
				\item \defin{Conjugacy Problem} for $G$ [\,$\CP(G)$\,]: given two words $u,v$ in
				      $X^{\pm}$, decide whether they represent conjugate elements in $G$ ($ u \sim_{G}
				      v$) or not.
				\item \defin{Semi-conjugacy Problem} for $G$ [\,$\SCP(G)$\,]: given two words $u,v$ in
				      $X^{\pm}$, decide whether they represent semi-conjugate elements in $G$ ($ u
				      \sim_{G} v^{\pm 1}$) or not.
			\end{itemize}
		\end{defn}

		\begin{qst}
			Is there a (finitely presented) group with decidable semi-conjugacy problem but undecidable conjugacy
			problem?
		\end{qst}

		This question looks quite tricky. Of course, if two elements $g,h\in G$ are not semi-conjugate, then they are not conjugate either. But if $g\sim h^{-1}$, it is not clear how this information can help, in general, to decide whether $g\sim h$ or not; in this sense the answer to the question seems reasonable to be negative. But, on the other hand, the two algorithmic problems are so close that it seems hard to construct a counterexample.

		\medskip

		In our case, the condition demanded for the defining automorphisms is derangedness (see~\ref{def:deranged}). The first observation is the following: suppose $H\rtimes_{\alpha}\ZZ \isom K\rtimes_{\beta}\ZZ$ for some groups $H$, $K$, and some deranged automorphisms $\alpha \in \Aut(H)$ and $\beta\in \Aut(K)$. Then, by construction, $H$ and $K$ are respectively, the unique normal subgroups with quotient isomorphic to $\ZZ$. Hence $H \isom K$ and, after expressing $\beta$ in terms of the generators of $H$, we can think that both $\alpha, \beta\in \Aut(H)$. The next step is to show that, under the derangedness condition, $H\rtimes_{\alpha}\ZZ \isom H\rtimes_{\beta}\ZZ$ implies that $[\alpha], [\beta]\in \Out(H)$ are semi-conjugate. To see this, we need to analyze how homomorphisms between unique $\ZZ$-extensions look like.

		%

		\begin{defn}\label{def:stable endomorphism}
			Let $G$ be a group, and $H$ a subgroup of $G$. An endomorphism $\Phi \in \End (G)$ is called \emph{$H$-stable} if it leaves $H$ invariant as a subgroup. The collection of all $H$-stable endomorphisms of $G$ form a submonoid denoted $\End_{H}(G) \leqslant \End (G)$. In a similar way, the collection of all $H$-stable automorphisms of~$G$ form a subgroup denoted $\Aut_{H}(G)\leqslant \Aut(G)$.
		\end{defn}

		A general description of the $H$-stable endomorphisms and automorphisms of infinite-cyclic extensions of $H$ follows.

		\begin{prop}\label{prop:isomorphism between G_deranged}
			Let $H$ be a group generated by $X=\{ x_i \mid i\in I\}$, and let $\alpha, \beta \in \Aut(H)$. Then, any homomorphism from $H\rtimes_{\alpha}\ZZ$ to $H\rtimes_{\beta}\ZZ$ mapping $H$ to $H$ is of the form
			\begin{equation}\label{eq:stable endos}
				\begin{array}{rcl}
					\Phi_{\epsilon,\phi,h_0} \colon H\rtimes_{\alpha}\ZZ & \to     & H\rtimes_{\beta}\ZZ, \\
					x_i                                                  & \mapsto & \phi(x_i)            \\
					t                                                    & \mapsto & h_0 \, t^{\epsilon}
				\end{array}
			\end{equation}
			where $\epsilon \in \ZZ$, $h_0\in H$, and $\phi \in \End(H)$ are such that
			$\gamma_{h_0}\beta^{\epsilon} \phi =\phi \alpha$.

			Furthermore, $\Phi_{\epsilon,\phi ,h_0}$ is an isomorphism if and only if $\epsilon
			=\pm 1$ and $\phi \in \Aut(H)$. Thus, $H$-stable automorphisms of
			$H\rtimes_{\alpha}\ZZ$ are precisely
			\begin{align}\label{eq:stable isos}
				\Aut_{H}(H\rtimes_{\alpha}\ZZ) & =\left\{ \, \Phi_{\epsilon,\phi,h_0} \mid \epsilon =\pm 1,\, h_0\in H,\, \phi \in \Aut(H) \text{ s.t. } \gamma_{h_0}\alpha^{\epsilon} \phi =\phi \alpha \, \right\} .
			\end{align}
		\end{prop}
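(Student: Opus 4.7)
The plan is to begin by observing that any element of $H\rtimes_{\alpha}\ZZ$ has a unique normal form $ht^k$, so any homomorphism $\Phi$ out of this group is completely determined by its values on $X\cup\{t\}$. Since by hypothesis $\Phi(H)\subseteq H$, the restriction $\phi := \Phi|_H$ is an endomorphism of $H$, forcing $\Phi(x_i) = \phi(x_i)$ for every $x_i\in X$. Moreover, $\Phi$ takes the canonical short exact sequence $1\to H\to H\rtimes_{\alpha}\ZZ\to\ZZ\to 1$ to the corresponding one for $\beta$, so it descends to a homomorphism $\bar\Phi\colon\ZZ\to\ZZ$, which is multiplication by some uniquely determined integer $\epsilon$. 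Hence $\Phi(t) = h_0\,t^{\epsilon}$ for uniquely determined $h_0\in H$ and $\epsilon\in\ZZ$.

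Next, I would extract the compatibility condition on $(\epsilon,\phi,h_0)$ by forcing $\Phi$ to respect the defining relations $tx_it^{-1} = \alpha(x_i)$. Using the jumping rule~\eqref{prod} inside $H\rtimes_{\beta}\ZZ$, one computes
\[
\Phi(t)\,\Phi(x_i)\,\Phi(t)^{-1} \,=\, (h_0 t^{\epsilon})\,\phi(x_i)\,(h_0 t^{\epsilon})^{-1} \,=\, h_0\,\beta^{\epsilon}(\phi(x_i))\,h_0^{-1} \,=\, (\gamma_{h_0}\beta^{\epsilon}\phi)(x_i),
\]
which must coincide with $\Phi(\alpha(x_i)) = (\phi\alpha)(x_i)$; since $X$ generates $H$, this is equivalent to the identity $\gamma_{h_0}\beta^{\epsilon}\phi = \phi\alpha$ of endomorphisms of $H$. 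Conversely, for any triple $(\epsilon,\phi,h_0)$ satisfying this condition, the assignment~\eqref{eq:stable endos} respects both the relations of $H$ (trivially, through~$\phi$) and the relations $tx_it^{-1} = \alpha(x_i)$ (by the same computation read backwards), hence defines a legitimate homomorphism $\Phi_{\epsilon,\phi,h_0}$.

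For the isomorphism characterization, the necessary conditions are read off from the induced short exact sequences: if $\Phi_{\epsilon,\phi,h_0}$ is bijective, then so is $\bar\Phi$, which forces $\epsilon = \pm 1$; and any $h\in H$ must have a preimage with trivial $t$-component (otherwise its image would not lie in $H$), showing that $\phi$ is surjective, while its injectivity is inherited from that of $\Phi$. Conversely, if $\epsilon = \pm 1$ and $\phi\in\Aut(H)$, then surjectivity is immediate, since the image contains $\phi(H)=H$ and $h_0 t^{\epsilon}$, and hence $t$; for injectivity, any $ht^k$ in the kernel has $t$-component $k\epsilon = 0$, so $k = 0$, whence $\phi(h) = 1$ and therefore $h = 1$. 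Finally, specialising to $\beta = \alpha$ yields the description~\eqref{eq:stable isos} of $\Aut_{H}(H\rtimes_{\alpha}\ZZ)$. There is no substantial obstacle here; the proposition is essentially a careful bookkeeping of the semidirect-product multiplication and of the interplay between the three parameters $\epsilon$, $\phi$, and $h_0$.
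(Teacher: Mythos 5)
Your argument is correct and follows essentially the same route as the paper: write $\Phi(t)=h_0t^{\epsilon}$ via the normal form, apply $\Phi$ to the relation $tht^{-1}=\alpha(h)$ to extract $\gamma_{h_0}\beta^{\epsilon}\phi=\phi\alpha$, and characterize bijectivity through the $t$-exponent of images. The only (harmless) difference is cosmetic: you phrase the surjectivity of $\phi$ via preimages having trivial $t$-component, where the paper argues via $\phi(H)\normaleq\langle\phi(H),h_0t^{\epsilon}\rangle$; the content is identical.
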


		\begin{proof}
			Let $\Phi \colon H\rtimes_{\alpha}\ZZ \to H\rtimes_{\beta}\ZZ$ be a homomorphism
			leaving $H$ invariant, and let us denote by $\phi\colon H\to H$ its restriction to $H$.
			Write $\Phi(t) =h_0\, t^{\epsilon}$ for some $h_0\in H$ and $\epsilon \in \ZZ$.
			Applying $\Phi$ to both sides of the relation $tht^{-1} =\alpha (h)$ in the domain, we
			get
			\begin{equation*}
				h_0 \cdot \beta ^{\epsilon } \phi (h)\cdot h_0^{-1}=h_0 t^{\epsilon} \cdot \phi(h)\cdot t^{-\epsilon}h_0^{-1} =\Phi (tht^{-1}) =\Phi (\alpha(h))=\phi \alpha (h) \, ,
			\end{equation*}
			for all $h\in H$. Hence, $\gamma_{h_0}\beta^{\epsilon} \phi =\phi \alpha$ and $\Phi
			=\Phi_{\epsilon,\phi ,h_0}$ has the desired form.

			Assume now that $\Phi_{\epsilon,\phi ,h_0}$ is an isomorphism (in particular,
			$\phi\colon H\to H$ is injective). Then we must have $\epsilon =\pm 1$, otherwise $t$
			would not be in the image. On the other hand, since $H\normaleq H\rtimes_{\alpha}\ZZ$,
			we have that
			\begin{equation*}
				\phi(H)=\Phi_{\epsilon,\phi ,h_0}(H) \normaleq H\rtimes_{\beta}\ZZ
				=\Phi_{\epsilon,\phi ,h_0}(H\rtimes_{\alpha}\ZZ) =\langle \phi(H),\, h_0 t^{\epsilon} \rangle \, ,
			\end{equation*}
			and so, any element of $H\rtimes_{\beta }\ZZ$ can be written in the form $\phi(h)(h_0
			t^{\epsilon })^k$, for some $h\in H$ and $k\in \ZZ$; and it belongs to $H$ if and only
			if $k=0$. Thus, $\Phi(H)=H$ and $\phi \in \Aut(H)$. For the converse, it is clear that
			$\epsilon =\pm 1$ and $\phi \in \Aut(H)$ implies that $\Phi_{\epsilon,\phi ,h_0}$ is an
			isomorphism. The final statement follows immediately.
		\end{proof}

		Note that
		\cref{prop:deranged characterization} states precisely that $\End_{H} (H\rtimes_{\alpha }\ZZ)=\End (H\rtimes_{\alpha}\ZZ)$ if and only if $\alpha$ is deranged. This fact, together with the previous description provides a characterization of isomorphic deranged extensions in terms of semi-conjugacy.

		\begin{cor}\label{cor:isomorphic deranged extensions}
			Let $H$ and $K$ be two arbitrary groups, and let $\alpha\in \Aut(H)$ and $\beta\in
			\Aut(K)$ be two deranged automorphisms. Then,
			\begin{equation}\label{eq:isomorphic deranged extensions}
				H\rtimes_\alpha \ZZ \isom K\rtimes_\beta \ZZ \Biimp H\isom K \text{ and } [\alpha]\sim [\beta']^{\pm 1} \text{ in } \Out (H) \, ,
			\end{equation}
			where $\beta'=\psi^{-1}\beta\psi\in \Aut(H)$, and $\psi\colon H\to K$ is any
			isomorphism.
		\end{cor}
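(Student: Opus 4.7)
The plan is to treat the two implications separately. The backward direction follows essentially from earlier work, whereas the forward direction requires the uniqueness of the base group coming from derangedness together with the structural description in \cref{prop:isomorphism between G_deranged}.

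For the backward direction, I would first observe that any isomorphism $\psi\colon H\to K$ induces a natural isomorphism
\begin{equation*}
	\Psi\colon H\rtimes_{\beta'}\ZZ \, \to \, K\rtimes_{\beta}\ZZ, \qquad (h,n)\mapsto (\psi(h),n),
\end{equation*}
which is a routine check using $\beta=\psi\beta'\psi^{-1}$ together with the multiplication rule \eqref{eq:semidirect product}. Then, since $[\alpha]\conj [\beta']^{\pm 1}$ in $\Out(H)$, \cref{lem:suf cond for isomorphic Z-extensions} yields an isomorphism $H\rtimes_{\alpha}\ZZ \isom H\rtimes_{\beta'}\ZZ$; composing with $\Psi$ produces the desired $H\rtimes_{\alpha}\ZZ \isom K\rtimes_{\beta}\ZZ$.

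For the forward direction, suppose $\Phi\colon H\rtimes_{\alpha}\ZZ\to K\rtimes_{\beta}\ZZ$ is an isomorphism. Since $\alpha$ and $\beta$ are deranged, \cref{prop:deranged characterization} guarantees that $H$ (resp.~$K$) is the unique---indeed fully characteristic---normal subgroup of its ambient extension with quotient $\ZZ$; hence $\Phi(H)=K$, which already forces $H\isom K$. Now pick any isomorphism $\psi\colon H\to K$, set $\beta'=\psi^{-1}\beta\psi \in \Aut(H)$ (which remains deranged, as this property is invariant under transport by $\psi$), and let $\Psi$ be the induced isomorphism as above. Then $\Psi^{-1}\!\circ\Phi$ is an $H$-stable isomorphism $H\rtimes_{\alpha}\ZZ \to H\rtimes_{\beta'}\ZZ$, so \cref{prop:isomorphism between G_deranged} applies: it must have the form $\Phi_{\epsilon,\phi,h_0}$ with $\epsilon=\pm 1$, $\phi\in\Aut(H)$, $h_0\in H$, satisfying
\begin{equation*}
	\gamma_{h_0}(\beta')^{\epsilon}\phi = \phi\alpha.
\end{equation*}
Rearranging gives $\phi\alpha\phi^{-1} = \gamma_{h_0}(\beta')^{\epsilon}$; passing to $\Out(H)$ kills the inner automorphism $\gamma_{h_0}$, yielding $[\phi][\alpha][\phi]^{-1}=[\beta']^{\epsilon}$, \ie $[\alpha]\conj[\beta']^{\pm 1}$ in $\Out(H)$.

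There is no serious obstacle since all the heavy machinery---uniqueness of the base group, the structural form of $H$-stable isomorphisms, and sufficiency of outer semi-conjugacy for isomorphism of extensions---is already in place. The main care is to recognize that derangedness permits reducing an arbitrary isomorphism $\Phi$ to an $H$-stable one by precomposing with $\Psi^{-1}$, and to verify the routine points that $\Psi$ is a well-defined homomorphism and that $\beta'$ remains deranged. One can also observe that the resulting semi-conjugacy class of $[\beta']$ in $\Out(H)$ does not depend on the choice of $\psi$: any other such isomorphism differs from $\psi$ by an element of $\Aut(H)$, which conjugates $\beta'$ accordingly and hence leaves its outer semi-conjugacy class unchanged.
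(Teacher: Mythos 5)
Your proof is correct and follows essentially the same route as the paper: reduce to the case of a common base group via the induced isomorphism $H\rtimes_{\beta'}\ZZ\isom K\rtimes_{\beta}\ZZ$, use \cref{lem:suf cond for isomorphic Z-extensions} for the backward implication, and for the forward one use derangedness (via \cref{prop:deranged characterization}) to force any isomorphism to be $H$-stable and then read off the semi-conjugacy from the relation $\gamma_{h_0}(\beta')^{\epsilon}\phi=\phi\alpha$ of \cref{prop:isomorphism between G_deranged}. The extra checks you record (that $\beta'$ stays deranged and that the semi-conjugacy class is independent of the choice of $\psi$) are correct and slightly more careful than the paper's write-up.
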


		\begin{proof}
			For any isomorphism $\psi\colon H\to K$, it is clear that $K\rtimes_{\beta}\ZZ
			=\psi(H)\rtimes_{\beta}\ZZ \simeq H\rtimes_{\psi^{-1}\beta\psi}\ZZ$. Hence, the
			statement is equivalent to saying
			\begin{equation*}
				H\rtimes_\alpha \ZZ \isom H\rtimes_\beta \ZZ \Biimp [\alpha]\sim [\beta]^{\pm 1} \text{ in } \Out (H) \, ,
			\end{equation*}
			for $\alpha, \beta\in \Aut(H)$. The implication to the left is a general fact
			(see~\cref{lem:suf cond for isomorphic Z-extensions}), and the implication to the right
			is a direct consequence of~\cref{prop:isomorphism between G_deranged}: since $\alpha$
			and $\beta$ are deranged, any isomorphism from $H\rtimes_\alpha \ZZ$ to $H\rtimes_\beta
			\ZZ$ must map $H$ to $H$ and so, must be of the form $\Phi_{\epsilon, \phi, h_0}$ for
			some $\epsilon =\pm 1$, $h_0\in H$, and $\phi \in \Aut(H)$ satisfying
			$\gamma_{h_0}\beta^{\epsilon} \phi =\phi \alpha$. Hence, $[\alpha]\sim [\beta]^{\pm 1}$
			in $\Out (H)$.
		\end{proof}

		We are now ready to prove the main result in this section: for any family $\HH$ of finitely presented groups with decidable isomorphism problem, we characterize when the family $\fpa{!\HH \by \ZZ}$ has again decidable isomorphism problem, in terms of a certain variation of the conjugacy problem for outer automorphisms of groups in $\HH$.

		\medskip

		Note that \cref{cor:isomorphic deranged extensions} clearly insinuates a link between
		the isomorphism problem for deranged extensions, and the semi-conjugacy problem for deranged outer automorphisms of the base group. However, there is a subtlety at this point: the supposed algorithm solving the isomorphism problem will receive the input (the compared groups) as finite presentations of the $\ZZ$-extensions. From those, we know how to compute suitable base groups $H,K$, and automorphisms $\alpha, \beta$ (see \cref{prop:enumerate standard presentations of fg-by-Z}), but this last ones are \emph{given by images of the generators in the starting presentations}, and not as words in some presentation of the corresponding automorphism groups, which would be the appropriate inputs for the standard conjugacy problem there.

		So, in general, one must distinguish between these two close but not necessarily identical situations. As before, we state both problems together to emphasize the difference between them.

		%

		\begin{defn}
			Let $\pres{X}{R}$ be a presentation for a group $G$, $\pres{Y}{S}$ a presentation for $\Aut(G)$, and assume $|X|<\infty$. Then:
			\begin{itemize}
				\item \defin{(Standard) conjugacy problem} for $\Aut(G)$ [\,$\CP(\Aut(G))$\,]: given
				      two automorphisms $\alpha, \beta \in \Aut(G)$ \emph{as words in the presentation
				      	of $\Aut(G)$}, decide whether $\alpha$ and $\beta$ are conjugate to each other in
				      $\Aut(G)$.
				\item \defin{$\Aut$-conjugacy problem} for $G$
				      [\,$\CP_{\!\scriptscriptstyle{G}}(\Aut(G))$\,]: given two automorphisms $\alpha,
				      \beta \in \Aut(G)$ \emph{by images of (the finitely many) generators $X$}, decide whether $\alpha$ and $\beta$ are conjugate to each other in~$\Aut(G)$.
			\end{itemize}
			Similarly, we define the \emph{$\Out$-conjugacy problem} [\,$\DCP{G}(\Out(G))$\,], the
			\emph{$\Aut$-semi-conjugacy problem} [\,$\DSCP{G}(\Aut(G))$\,], and the
			\emph{$\Out$-semi-conjugacy problem} [\,$\DSCP{G}(\Out(G))$\,] for $G$ (in contrast with
			the standard $\CP(\Out(G))$, $\SCP(\Aut(G))$, and $\SCP(\Out(G))$).
		\end{defn}

		Note that, in general, these pairs of problems are similar but not identical: from the algorithmic point of view it could be very different to have an automorphism of $G$ given as the collection of images of a finite set of generators of $G$, or as a word (composition of generators for $\Aut(G)$). Consider, for example,  the Baumslag--Solitar group $G=\BS(2,4)$, which is finitely generated, but whose automorphism group $\Aut(G)$ is known to be not finitely generated (see~\cite{collins_automorphisms_1983}).

		However, knowing in advance a finite set of generators for $\Aut(G)$ (respectively,
		$\Out(G)$) as images of generators of $G$, these two kinds of problems turn out to be
		equivalent.

		\begin{prop}
			Let $\pres{X}{R}$ be a presentation for a group $G$, $X=\{x_1, \ldots ,x_n\}$, and $\{u_{i,j} \mid i=1, \ldots ,n,\,\, j=1,\ldots, N \}$ a finite set of words in $X^{\pm}$ such that $\{\alpha_j\colon x_i\mapsto u_{i,j} \mid j=1,\ldots, N \}$ is a well defined finite family of automorphisms generating~$\Aut(G)$. Then,
			\begin{equation} \label{eq:AutCP <-> CPAut}
				\DCP{G}(\Aut(G)) \text{ is decidable} \Biimp \CP(\Aut(G)) \text{ is decidable\,}.
			\end{equation}
			The same is true replacing conjugacy by semi-conjugacy, and/or $\Aut$ by $\Out$.
		\end{prop}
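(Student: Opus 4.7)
My plan is to show that the two ways of representing an element of $\Aut(G)$---by the tuple of images $(v_1,\ldots,v_n)$ of the generators $X$, or by a word $w$ in the generators $Y=\{\alpha_1,\ldots,\alpha_N\}$ of $\Aut(G)$---can be translated into one another algorithmically. Once both translations are in place, each implication in~\eqref{eq:AutCP <-> CPAut} follows by the routine recipe: transform the input to the format required by the assumed oracle, invoke it, and report the answer. The same recipe will handle the semi-conjugacy and $\Out$ variants with no additional effort, so I will focus on the conjugacy-$\Aut$ case.

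I would first dispatch the easy direction, from a $Y$-word to images. Given $w=\alpha_{j_1}^{\epsilon_1}\cdots\alpha_{j_k}^{\epsilon_k}\in F(Y)$, iterated formal substitution using the data $u_{i,j}$ produces the images $w(x_i)\in F(X)$, purely mechanically, with no decidability hypothesis. This yields the implication \emph{decidability of $\DCP{G}(\Aut(G))$ implies decidability of $\CP(\Aut(G))$}: on input $Y$-words $w_\alpha, w_\beta$, compute the corresponding image tuples and feed them to the assumed algorithm.

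The reverse translation---from images back to a $Y$-word---is the point that calls for care, and I would handle it by a standard dovetailing. Given $\alpha\in\Aut(G)$ by images $(v_1,\ldots,v_n)$, enumerate all words $w\in F(Y)$ and, in parallel, enumerate the (always recursively enumerable) set of $X^{\pm}$-pairs $(u,u')$ with $u=_G u'$, obtained by listing consequences of the relators $R$. For each candidate $w$, attempt to certify $w(x_i)=_G v_i$ for every $i=1,\ldots,n$ using this second enumeration. Since $\{\alpha_j\}$ generates $\Aut(G)$, some such $w$ exists; any witness is eventually validated, and applying this to both $\alpha$ and $\beta$ reduces $\DCP{G}(\Aut(G))$ to $\CP(\Aut(G))$. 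Crucially, this half works \emph{without} decidability of the word problem in $G$, only its semi-decidability, which is automatic for any finitely presented group.

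The semi-conjugacy and $\Out$ variants follow with essentially no change: swapping conjugacy for semi-conjugacy merely swaps the oracle invoked, while for the outer versions the translations above output genuine $\Aut(G)$-representatives, whose classes in $\Out(G) = \Aut(G)/\Inn(G)$ are exactly the inputs expected by $\CP(\Out(G))$ and its semi-conjugacy counterpart. I do not anticipate any real obstacle; the only conceptual subtlety worth flagging is the one just noted about the word problem.
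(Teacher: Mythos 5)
Your proposal is correct and follows essentially the same route as the paper: the easy direction by formal substitution of the $u_{i,j}$, and the reverse translation by dovetailing a brute-force enumeration of words in the $\alpha_j$ against an enumeration of the consequences of $R$ (so only semi-decidability of the word problem is used), then invoking the assumed oracle. The only thing the paper adds that you gloss over is a brief remark that the $\Out$/semi-conjugacy variants require one further brute-force search layer over conjugators, but it too leaves those details to the reader.
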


		\begin{proof}
			Suppose that $G$ has decidable $\Aut$-conjugacy problem. Given two automorphisms
			$\alpha, \beta \in \Aut(G)$ as words on the $\alpha_i$'s, say $\alpha=a(\alpha_1,
			\ldots ,\alpha_N)$ and $\beta=b(\alpha_1, \ldots ,\alpha_N)$, we can compute the
			corresponding compositions of $\alpha_j$'s and obtain explicit expressions for $\alpha
			(x_i)$ and $\beta(x_i)$ in terms of $X$, for $i=1,\ldots ,n$. Now, applying the
			solution to the $\Aut$-conjugacy problem for $G$ we decide whether $\alpha$ and $\beta$
			are conjugate to each other in $\Aut(G)$.

			Conversely, suppose $\Aut(G)$ has decidable conjugacy problem, and we are given two
			automorphisms $\alpha, \beta \in \Aut(G)$ by the images of the $x_i$'s, say
			$\alpha(x_i)$ and $\beta(x_i)$, $i=1,\ldots ,n$. We will express $\alpha$ and $\beta$
			as compositions of the $\alpha_j$'s, and then apply the assumed solution to the conjugacy problem for $\Aut(G)$ to decide whether $\alpha$ and $\beta$ are conjugate to each other, or not. We  can do this by a brute force enumeration of all possible formal reduced words $w$ on $\alpha_1, \ldots ,\alpha_N$ and, for each one, computing the tuple $(w(x_1), \ldots ,w(x_n))$ and trying to check whether it equals $(\alpha(x_1),
			\ldots ,\alpha(x_n))$, or $(\beta(x_1), \ldots ,\beta(x_n))$ (following a brute force enumeration of the normal closure $\normalcl{R}$, like in the proof of \cref{prop:enumerate standard presentations of fg-by-Z}).


			The proofs of the other versions of the statement are completely analogous. For the conjugacy problems we need to add another brute force search layer enumerating all possible conjugators; we leave details to the reader.
		\end{proof}

		After this proposition we can prove the main result in this section.

		\begin{thm}\label{thm:main-iso}
			Let $\HH$ be a family of finitely presented groups with decidable isomorphism problem.
			Then, the isomorphism problem of $!\HH \by \ZZ$ is decidable if and only if the
			$\Out_{\der}$-semi-conjugacy problem of $H$ is decidable for every $H$ in $\HH$; \ie
			\begin{equation*}
				\IP(!\HH \by \ZZ) \text{ decidable} \Biimp \DSCP{H}(\Out_{\der}(H)) \text{ decidable, }\forall H \in \HH \, .
			\end{equation*}
		\end{thm}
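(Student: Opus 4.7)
My plan is to pivot both directions on Corollary \ref{cor:isomorphic deranged extensions}, which translates $H \rtimes_\alpha \ZZ \isom K \rtimes_\beta \ZZ$ (for deranged $\alpha, \beta$) into the combination of $H \isom K$ and $[\alpha] \sim [\beta']^{\pm 1}$ in $\Out(H)$, where $\beta' = \psi^{-1}\beta\psi$ for any isomorphism $\psi\colon H\to K$.

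For the implication $(\Rightarrow)$, given $H\in\HH$ (finitely presented) and deranged $\alpha, \beta\in\Aut_{\der}(H)$ specified by the images of the generators of $H$, I explicitly build the two standard finite presentations of the semidirect products $H\rtimes_\alpha\ZZ$ and $H\rtimes_\beta\ZZ$. By derangedness and the hypothesis $H\in\HH$, both groups belong to $!\HH\by\ZZ$. Feeding these presentations to the assumed algorithm for $\IP(!\HH\by\ZZ)$ yields $\YES$ or $\NO$; by Corollary \ref{cor:isomorphic deranged extensions} applied with $\psi = \id_H$, this answer is precisely the desired solution to $\DSCP{H}(\Out_{\der}(H))$ on input $(\alpha,\beta)$.

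For $(\Leftarrow)$, given finite presentations $P_1, P_2$ of groups $G_1, G_2 \in !\HH\by\ZZ$, I first run Proposition \ref{prop:enumerate standard presentations of fg-by-Z} on each $P_i$ and wait for the first standard presentation to appear, obtaining $G_i \isom H_i \rtimes_{\alpha_i} \ZZ$. By Proposition \ref{prop:deranged characterization}\ref{item:H=preabelianization of torsion}, each $H_i$ is forced to coincide with $\ker(\piabf)\normaleq G_i$, which belongs to $\HH$ by hypothesis. I then use the decidable $\IP(\HH)$ to test whether $H_1 \isom H_2$: if not, I output $\NO$ by Corollary \ref{cor:isomorphic deranged extensions}. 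If yes, I extract an explicit isomorphism $\psi\colon H_1 \to H_2$ by brute-force enumeration of candidate pairs of tuples of words $\bigl((v_1,\ldots,v_n),\,(u_1,\ldots,u_m)\bigr)$, verifying in parallel (via enumeration of the normal closures of the defining relators, as in the proof of Proposition \ref{prop:enumerate standard presentations of fg-by-Z}) that they yield well-defined mutually inverse homomorphisms; the correct pair is found in finite time because $H_1 \isom H_2$ holds. With $\psi$ in hand, I compute the generator images of $\alpha_2' := \psi^{-1}\alpha_2\psi \in \Aut_{\der}(H_1)$ and apply the assumed $\DSCP{H_1}(\Out_{\der}(H_1))$ to the pair $(\alpha_1, \alpha_2')$. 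By Corollary \ref{cor:isomorphic deranged extensions}, this output coincides with the answer to $G_1 \isom G_2$.

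The main obstacle I foresee is the effective extraction of an explicit isomorphism $\psi$ from a mere decision procedure for $\IP(\HH)$; this is handled by the enumeration trick above, which requires only that equality of elements in a finitely presented group be semi-decidable. A secondary point is ensuring that the base group output by Proposition \ref{prop:enumerate standard presentations of fg-by-Z} on a $!\HH\by\ZZ$ input does lie in $\HH$; this is forced, again via Proposition \ref{prop:deranged characterization}, by $\betti{G_i}=1$ which pins the base down to $\ker(\piabf)\normaleq G_i$, already assumed to be in $\HH$.
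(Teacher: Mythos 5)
Your proposal is correct and follows essentially the same route as the paper's own proof: both directions pivot on \cref{cor:isomorphic deranged extensions}, the easy direction builds the two standard presentations and queries $\IP(!\HH\by\ZZ)$, and the harder direction uses \cref{prop:enumerate standard presentations of fg-by-Z} to extract a standard presentation, identifies the base group as the unique normal subgroup with quotient $\ZZ$ (hence in $\HH$), and recovers an explicit isomorphism $\psi$ by the same brute-force enumeration before invoking the semi-conjugacy oracle. The only difference is cosmetic (order of the two implications).
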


		\begin{proof}

			Suppose that every $H \in \HH$ has decidable $\Out$-semi-conjugacy problem for deranged
			inputs. Given finite presentations of two groups $G$ and $G'$ in $!\HH \by \ZZ$, we
			run~\cref{prop:enumerate standard presentations of fg-by-Z} to compute standard
			presentations for them, and extract finite presentations for base groups and defining
			automorphisms (say $H$ and $\alpha\in \Aut(H)$ for $G$, and $K$ and $\beta\in \Aut(K)$
			for $G'$, respectively). We have $G=H\rtimes_\alpha \ZZ$ and $G'=K\rtimes_\beta \ZZ$
			and, by hypotheses, $\alpha$ and $\beta$ are deranged. Furthermore, since
			$\betti{G}=\betti{G'}=1$, $H$ and $K$ are the unique normal subgroups of $G$ and $G'$,
			respectively, with quotient $\ZZ$; hence, $H,K\in \HH$.

			Now we apply the isomorphism problem within $\HH$ to the obtained presentations for $H$
			and $K$, and decide whether they are isomorphic as groups. If $H\not\simeq K$ then,
			by~\cref{cor:isomorphic deranged extensions}, $G\not\isom G'$ and we are done.
			Otherwise, we construct an explicit isomorphism $\psi\colon H\to K$ (by a brute force
			search procedure like the ones above), we compute $\beta'=\psi^{-1}\beta\psi\in
			\Aut(H)$, and we apply our solution to the $\Out$-semi-conjugacy problem for $H\in \HH$
			to the inputs $\alpha$ and $\beta'$, (which are deranged, by construction). The output
			on whether $[\alpha]$ and $[\beta']$ are or are not semi-conjugate  in $\Out (H)$ is the
			final answer we are looking for (again by~\cref{cor:isomorphic deranged extensions}).

			For the converse, assume that the isomorphism problem is decidable in the family
			$!\HH \by \ZZ$, and fix a finite presentation $\pres{X}{R}$ for a group $H \in
			\HH$. Given two deranged automorphisms $\alpha, \beta\in \Out(H)$ via images of the
			generators $x_i\in X$, build the corresponding standard presentations for
			$H\rtimes_{\alpha}\ZZ$ and $H\rtimes_{\beta}\ZZ$ (which are groups in $!\HH \by
			\ZZ$, by construction) and apply the assumed solution to the isomorphism problem for
			this family to decide whether they are isomorphic or not. By~\cref{cor:isomorphic
				deranged extensions}, the answer is affirmative if and only if $[\alpha]$ and $[\beta]$
			are semi-conjugate in $\Out(H)$.
		\end{proof}

		We apply now \cref{thm:main-iso} to special families of groups with decidable isomorphism problem. Some of these corollaries are already known in the literature; other methods provide alternative approaches.

		Taking $\HH$ to be a single group $H$, we get the following result.

		\begin{cor}
			Let $H$ be a finitely presented group. Then the isomorphism problem is decidable within
			the family $!H \by \ZZ$ if and only if $H$ has decidable $\Out$-semi-conjugacy problem
			for deranged inputs.
			In particular,
			if $|\Out(H)|<\infty$, then $!H \by \ZZ$ has decidable isomorphism problem. \qed
		\end{cor}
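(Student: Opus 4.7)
My plan is to deduce both assertions directly from \cref{thm:main-iso}. For the biconditional, I would apply the theorem to the singleton family $\HH=\{H\}$. Within a one-group family, the isomorphism problem is trivially decidable: both input presentations present $H$, so the algorithm always answers YES. The quantifier ``for every $H'\in\HH$'' collapses to the single case $H'=H$, and the theorem specializes to the equivalence
\begin{equation*}
	\IP(!H\by\ZZ) \text{ decidable} \Biimp \DSCP{H}(\Out_{\der}(H)) \text{ decidable},
\end{equation*}
which is exactly the statement.

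For the ``in particular'' claim, I would verify, assuming $|\Out(H)|<\infty$, that $\DSCP{H}(\Out_{\der}(H))$ is decidable, and then conclude by the first part. My plan is a finite brute-force reduction: fix explicit lifts $\xi_1,\ldots,\xi_n\in\Aut(H)$ of the finitely many elements of $\Out(H)$ (given by images of a fixed finite generating set of $H$), and precompute once and for all the finite table recording which pairs $([\xi_i],[\xi_j])$ are semi-conjugate in $\Out(H)$. Given two deranged inputs $\alpha,\beta\in\Aut(H)$ specified by images of generators, I would identify the classes $[\alpha]$ and $[\beta]$ by locating the unique indices $i,j$ for which $\alpha\xi_i^{-1}$ and $\beta\xi_j^{-1}$ are inner, and then look up the precomputed table.

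The only delicate point — and hence the main obstacle — is the identification step: recognizing, from the images of generators, which coset of $\Inn(H)$ an element of $\Aut(H)$ belongs to. This is where the hypothesis ``$|\Out(H)|<\infty$'' has to be interpreted as effectively finite, i.e., with an explicit description of $\Out(H)$ as a finite group together with the lifts $\xi_1,\ldots,\xi_n$; this is the standard setting in which such a finiteness assumption is used, and under it the check becomes a routine comparison of finitely many automorphisms. Granted this, the plan above gives a decision procedure for $\DSCP{H}(\Out_{\der}(H))$, and the first half of the corollary then yields decidability of $\IP(!H\by\ZZ)$.
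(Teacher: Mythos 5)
Your proposal is correct and matches the paper's (unwritten) argument: the corollary is obtained exactly by specializing \cref{thm:main-iso} to the singleton family $\HH=\{H\}$, whose internal isomorphism problem is trivially decidable. For the \emph{in particular} clause, note only that no extra ``effectively finite'' hypothesis is needed: since decidability is asserted for the fixed group $H$, the lifts $\xi_1,\dots,\xi_n$ and the finite semi-conjugacy table may simply be hardcoded into the algorithm, and your identification step terminates because the property ``$\alpha\xi_i^{-1}$ is inner'' is semi-decidable (dovetail over candidate conjugators $h\in H$ while enumerating the normal closure of the relators to verify $\alpha\xi_i^{-1}(x_k)=hx_kh^{-1}$) and holds for exactly one $i$, so running the $n$ searches in parallel always halts.
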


		%
		%
		%

		Taking $\HH$ to be the families of finite, finitely generated abelian, or polycyclic
		groups, \cref{thm:main-iso} provides well-known results, since the obtained extensions
		turn out to be subfamilies of that of virtually-polycyclic groups for which the
		isomorphism problem is known to be decidable (see~\cite{segal_decidable_1990}).

		\medskip

		For the family of Braid groups $\BB =\propsty{braid} =\{ B_n \mid n\geqslant 2\}$, the specially simple structure of its outer automorphism group allows us to state the isomorphism problem within the family $\BB\by \ZZ$.

		\begin{cor}
			The isomorphism problem is decidable within the family $\BB\by \ZZ$.
		\end{cor}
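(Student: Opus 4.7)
The plan is to combine \cref{thm:main-iso} with the classical structure theorem for $\Out(B_n)$. By the Dyer--Grossman theorem, $\Out(B_n)\isom \ZZ/2\ZZ$ for every $n\geq 2$ (trivially for $n=2$, since $B_2=\ZZ$), with non-trivial representative the involution $\iota\colon \sigma_i\mapsto \sigma_i^{-1}$. Since $\iota$ acts as $-\id$ on $B_n\ab\isom \ZZ$, it is deranged in the sense of \cref{def:deranged}; in contrast, inner automorphisms act trivially on the abelianization and are not deranged. Combined with \cref{lem:suf cond for isomorphic Z-extensions}, this shows that every group in $\BB\by \ZZ$ is isomorphic either to $B_n\times \ZZ$ (inner case, with $\betti{\cdot}=2$) or to $B_n\rtimes_\iota \ZZ$ (outer case, with $\betti{\cdot}=1$), for some $n\geq 2$.

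On input finite presentations of $G_1, G_2\in \BB\by \ZZ$, the algorithm first computes $\betti{G_1}$ and $\betti{G_2}$; if they differ, it outputs \NO. If both equal~$1$, then $G_1, G_2\in !\BB\by \ZZ$ and \cref{thm:main-iso} settles the question: the isomorphism problem $\IP(\BB)$ is decidable (any presentation of a braid group can be Tietze-transformed until it matches the standard presentation of some $B_n$, revealing the number of strands), and the $\Out_\der$-semi-conjugacy problem in each $B_n$ is trivial since $|\Out(B_n)|=2$. If both Betti numbers equal~$2$, then $G_i\isom B_{n_i}\times \ZZ$, and it suffices to compute $n_1, n_2$ and compare. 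For this, I would use \cref{prop:enumerate standard presentations of fg-by-Z} to enumerate standard presentations $H\rtimes_\alpha \ZZ$ of $G_i$; for each output, launch in parallel a braid-recognition process on the base $H$, which succeeds precisely when $H\isom B_m$ for some $m$, and produces that $m$. As soon as some recognition process succeeds, output the resulting $m$ as $n_i$. Termination is ensured because $B_{n_i}\times \ZZ = B_{n_i}\rtimes_\id \ZZ$ is itself a standard presentation with braid base, so its recognition process eventually succeeds; and any extracted index $m$ must equal $n_i$ since $B_n\times \ZZ\not\isom B_m\times \ZZ$ for $n\neq m$ (by, e.g., cohomological dimension).

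The main obstacle is the $\betti=2$ case. Although the underlying enumeration and recognition procedures are standard, justifying correctness requires the external rigidity $B_n\times \ZZ\not\isom B_m\times \ZZ$ (for $n\neq m$) to ensure that any index produced by the recognition is the intended one. This fact is well-known but lies outside the machinery developed in the present paper.
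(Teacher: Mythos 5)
Your proposal is correct and follows essentially the same route as the paper: the Dyer--Grossman description of $\Out(B_n)$, the reduction of $\BB\by\ZZ$ to the two lists $\{B_n\times\ZZ\}$ and $\{B_n\rtimes_\iota\ZZ\}$ separated by Betti number, and recognition of the number of strands via standard presentations. The one "obstacle" you flag --- the rigidity $B_n\times\ZZ\nisom B_m\times\ZZ$ for $n\neq m$ --- does not in fact require external input such as cohomological dimension: the paper disposes of it with the same elementary invariant used for the deranged case, namely that the center of $B_n\times\ZZ$ is $\grpcenter{B_n}\times\ZZ\isom\ZZ^2$, the quotient by it is $B_n/\grpcenter{B_n}$, and the abelianization of the latter is cyclic of order $n(n-1)$, which determines $n$.
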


		\begin{proof}
			It is well known that, for every $n\geq 2$, $\Out (B_n) =\{ \overline{\id}, \overline{\iota}\}$, where $\iota\colon B_n \to B_n$ is the automorphism given by $\sigma_i \mapsto \sigma_i^{-1}$ (see~\cite{dyer_automorphism_1981}). Then, (since automorphisms in the same inner class induce the same $\ZZ$-extension, by \cref{lem:suf cond for isomorphic Z-extensions}), we have
			\begin{equation} \label{eq:braid-by-Z isomorphism classes}
				\BB\by \ZZ =\{ B_n \times \ZZ , n \geq 2 \} \cup \{ B_n \rtimes_{\iota} \ZZ, n \geq 2 \} \, .
			\end{equation}

			We claim that they are all pairwise nonisomorphic. Those in the first term of
			the union cannot be isomorphic to those in the second one because $\id$ is not deranged, while $\iota$ is (in other words, $\betti{B_n \times \ZZ}=2$ while $\betti{B_n \rtimes_{\iota} \ZZ }=1$). Two deranged extensions $B_n \rtimes_{\iota} \ZZ$ and $B_m \rtimes_{\iota} \ZZ$ can only be isomorphic if their base groups are, and this happens only when $n=m$ (this can be seen, for example, by observing that the center $\textrm{Z}(B_n)$ is generated by the full twist $(\sigma_1 \sigma_2 \cdots \sigma_{n-1})^n$ and so, the abelianization of $B_n /\textrm{Z}(B_n)$ is cyclic of order~$n(n-1)$). Finally, the same argument shows that $B_n \times \ZZ \simeq B_m \times \ZZ$ if and only if $n=m$.

			Thus, the isomorphism problem within $\BB\by \ZZ$ is decidable: given two finite presentations of groups $G_1$ and $G_2$ in $\BB\by \ZZ$, explore the two trees of Tietze transformations until finding standard presentations for them, \ie until recognizing their number of strands, say $n$ and $m$. Now $G_1\simeq G_2$ if and only if $\betti{G_1 }=\betti{G_2 }$ and $n=m$.
		\end{proof}

		Finally, let us consider the case of finitely generated free groups, $\FF =
		\fga{\propsty{free}} =\{ F_n \mid n\geqslant 0\}$. To start with, the isomorphism
		problem for $\FF$ is decidable like in the case of Braid groups (since $F_n
		\simeq F_m \, \Leftrightarrow \, n=m$). A solution to the conjugacy problem in
		$\Out(F_n)$ was announced by M.~Lustig in the
		preprints~\cite{lustig_structure_2000,lustig_structure_2001}. Although this project is
		not completed (and there is no published version yet), it is believed that $\Out(F_n)$
		has decidable conjugacy problem. However, at this moment we can only say to have firm
		complete solutions for some classes of outer automorphisms:
		\begin{enumerate}[(i)]
			\item the case of rank 2 is easily decidable because $\Out(F_2)\simeq \GL_2(\ZZ)$;
			\item for finite-order elements of $\Out(F_n)$ an algorithm to solve the conjugacy
			      problem follows from results of S. Krsti\'c (see~\cite{krstic_actions_1989});
			\item J. Los and, independently, Z. Sela solved the conjugacy problem in
			      $\Out(F_r)$ for irreducible inputs,
			      see~\cite{sela_isomorphism_1995,los_conjugacy_1996,lustig_conjugacy_2007};
			\item for Dehn twist automorphisms, the conjugacy problem has been solved by
			      Cohen--Lustig, see~\cite{cohen_conjugacy_1999};
			\item finally, Krsti\'c--Lustig--Vogtmann solved the conjugacy problem in
			      $\Out(F_n)$ for linearly growing automorphisms, \ie for roots of Dehn twists,
			      see~\cite{krstic_equivariant_2001}.
		\end{enumerate}

		If the conjugacy problem in $\Out(F_n)$ were decidable in general, we could
		deduce from~\cref{thm:main-iso} that the isomorphism problem for the family
		$!\fga{\propsty{free}} \by \ZZ$ is decidable as well. By the moment, we can only
		restrict our attention to the above mentioned subsets of $\Out(F_n)$, where the
		conjugacy problem is firmly known to be decidable, and we obtain the isomorphism problem
		for the corresponding subfamilies~(see the proof of~\cref{thm:main-iso}).

		\begin{cor}
			If the conjugacy problem for $\Out(F_n)$ is decidable, then the isomorphism problem
			within the family $!\fga{\propsty{free}} \by \ZZ$ is also decidable. \qed
		\end{cor}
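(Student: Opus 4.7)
The plan is a direct application of \cref{thm:main-iso} to the family $\HH = \fga{\propsty{free}}$ of finitely generated free groups, once two minor bridges have been established: (a) the isomorphism problem for $\HH$ is decidable, and (b) decidability of $\CP(\Out(F_n))$ transfers to decidability of $\DSCP{F_n}(\Out_{\der}(F_n))$ for every $n \geqslant 0$.

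For (a), free groups of finite rank are classified by rank, and the rank can be computed from any finite presentation via abelianization (it equals the Betti number of the presented free group). So $\IP(\HH)$ is trivially decidable, and the hypothesis of \cref{thm:main-iso} is satisfied.

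For (b), first note that $\SCP(\Out(F_n))$ reduces to $\CP(\Out(F_n))$: given $[\alpha],[\beta]\in \Out(F_n)$ as words in a presentation of $\Out(F_n)$, semi-conjugacy holds if and only if $[\alpha]$ is conjugate to $[\beta]$ or to $[\beta]^{-1}$, which we can check by running the assumed conjugacy algorithm twice. Second, $\Aut(F_n)$ is finitely generated by Nielsen's classical theorem, by an explicit finite set of automorphisms \emph{naturally described by images of the free generators}; projecting, we obtain a finite generating set of $\Out(F_n)$ of the same form. This is precisely the hypothesis of the proposition relating $\CP(\Aut(G))$ with $\DCP{G}(\Aut(G))$ (and its $\Out$/semi-conjugacy variants). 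Consequently, decidability of $\CP(\Out(F_n))$ is equivalent to decidability of $\DCP{F_n}(\Out(F_n))$, and likewise for semi-conjugacy: $\DSCP{F_n}(\Out(F_n))$ is decidable. Finally, to get the deranged variant $\DSCP{F_n}(\Out_{\der}(F_n))$, we just need to recognize deranged inputs, which is algorithmic by condition \ref{item:deranged} of \cref{prop:deranged characterization}: given $\alpha\in \Aut(F_n)$ by images of generators, compute $\alpha\abf\in \GL_n(\ZZ)$ and check $\det(\alpha\abf-\id)\neq 0$.

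Putting everything together: under the hypothesis that $\CP(\Out(F_n))$ is decidable for every $n$, the $\Out_{\der}$-semi-conjugacy problem is decidable for every $H\in \fga{\propsty{free}}$. Since $\IP(\fga{\propsty{free}})$ is decidable, \cref{thm:main-iso} applies and yields decidability of $\IP(!\fga{\propsty{free}}\by\ZZ)$. There is no real obstacle here; the only point requiring care is the distinction between the two styles of conjugacy problem (words in $\Out(F_n)$ versus images of free generators), and this is exactly what the preceding proposition was designed to handle.
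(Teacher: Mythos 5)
Your proposal is correct and follows essentially the same route as the paper, which treats this corollary as an immediate application of \cref{thm:main-iso} (using that $F_n\simeq F_m \Leftrightarrow n=m$ and the preceding proposition to pass from $\CP(\Out(F_n))$ to $\DSCP{F_n}(\Out(F_n))$ via the Nielsen generators). The only superfluous step is recognizing deranged inputs: since you already decide the full $\DSCP{F_n}(\Out(F_n))$, its restriction to deranged inputs is automatically decidable.
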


		\begin{cor}
			The isomorphism problem within the following families is decidable:
			\begin{enumerate}[(i)]
				\item $!F_2 \by \ZZ$;
				\item $\{ F_n\rtimes_{\alpha}\ZZ \mid \alpha\in\Out(F_n) \text{ deranged and finite
					      order} \,\}$;
					\item $\{ F_n\rtimes_{\alpha}\ZZ \mid \alpha\in\Out(F_n) \text{ deranged and
						      irreducible}\,\}$;
						\item $\{ F_n\rtimes_{\alpha}\ZZ \mid \alpha\in\Out(F_n) \text{ deranged and linearly
							      growing} \,\}$. \qed
						\end{enumerate}
					\end{cor}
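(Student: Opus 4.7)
The plan is to apply \cref{thm:main-iso} with $\HH$ equal to (a suitable subfamily of) $\FF = \{F_n\}_{n\geqslant 0}$. The isomorphism problem inside $\FF$ is trivially decidable ($F_n \isom F_m$ if and only if $n = m$, a rank that can be read off from the abelianization), so for each family in the statement I am reduced to the $\Out_{\der}$-semi-conjugacy problem for $F_n$ restricted to the appropriate subclass of outer automorphisms. Given two inputs, I would first use \cref{prop:enumerate standard presentations of fg-by-Z} to extract standard presentations, hence base groups $F_n, F_m$ and deranged automorphisms $\alpha \in \Aut(F_n)$, $\beta \in \Aut(F_m)$; if $n \neq m$ then the two groups are non-isomorphic by \cref{cor:isomorphic deranged extensions}, and otherwise the remaining decision is whether $[\alpha] \sim [\beta]^{\pm 1}$ in $\Out(F_n)$.

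The next step is to reduce semi-conjugacy to conjugacy inside $\Out(F_n)$: $[\alpha]\sim [\beta]^{\pm 1}$ if and only if $[\alpha]\sim [\beta]$ or $[\alpha]\sim [\beta^{-1}]$, so two queries to a conjugacy oracle suffice. For this reduction to respect the subclass restriction, the subclass in question must be closed under both conjugation and inversion in $\Out(F_n)$; this is immediate for each of the three relevant properties, since being finite-order, irreducible, or linearly growing is invariant under taking inverses and under conjugation in $\Out(F_n)$. Note that, since these properties are also invariant under semi-conjugacy, if two inputs genuinely come from the family, the extracted automorphisms will automatically lie in the corresponding subclass, independently of the particular standard presentation chosen.

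It then remains to invoke the known conjugacy algorithms in each case: for (i), $\Out(F_2) \isom \GL_2(\ZZ)$ reduces conjugacy to a standard linear-algebraic problem; for (ii), I would use Krsti\'c's algorithm~\cite{krstic_actions_1989} for finite-order outer automorphisms; for (iii), the Los/Sela/Lustig results~\cite{sela_isomorphism_1995,los_conjugacy_1996,lustig_conjugacy_2007} for irreducible outer automorphisms; and for (iv), the Krsti\'c--Lustig--Vogtmann algorithm~\cite{krstic_equivariant_2001} for linearly growing outer automorphisms. Running each of these algorithms twice (once on $[\beta]$, once on $[\beta^{-1}]$) yields a decision for $\DSCP{F_n}(\Out_{\der}(F_n))$ restricted to the subclass.

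The main subtlety, rather than a deep obstacle, is the input-format mismatch flagged in the paragraph preceding \cref{thm:main-iso}: the automorphism $\alpha$ extracted from a standard presentation is given by images of a fixed basis of $F_n$, whereas the cited conjugacy algorithms may take inputs as words in generators of $\Out(F_n)$. This is handled by the conversion proposition just before \cref{thm:main-iso}, using that $\Out(F_n)$ admits, for every $n\geqslant 2$, a finite generating set whose elements are described by explicit images of a basis of $F_n$ (for instance, the Nielsen generators), so the $\Out$-conjugacy problem and the standard conjugacy problem in $\Out(F_n)$ are algorithmically interchangeable.
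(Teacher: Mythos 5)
Your proposal is correct and follows essentially the same route the paper intends: the corollary carries no separate proof (the \verb|\qed| in the statement signals that it follows by running the argument of \cref{thm:main-iso} restricted to the listed subclasses of $\Out(F_n)$ where the conjugacy problem is known to be decidable). Your additional observations --- closure of each subclass under inversion and conjugation, invariance under semi-conjugacy so that the extracted automorphism stays in the subclass, and the Nielsen-generator conversion between input formats --- are exactly the details the paper leaves implicit.
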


					It is worth mentioning that our approach is somehow opposite to that taken by Dahmani
					in~\cite{dahmani_suspensions_2016}. In this interesting preprint the author solves the
					conjugacy problem for atoroidal automorphisms of $F_n$. An automorphism $\alpha \in
					\Out(F_n)$ is \defin{atoroidal} if no proper power of $\alpha$ fixes any nontrivial
					conjugacy class (note that this notion is similar in spirit to our notion of
					derangedness, though they do not coincide). Brinkmann proved
					in~\cite{brinkmann_hyperbolic_2000} that $F_n \rtimes_{\alpha} \ZZ$ is hyperbolic if
					and only if $\alpha$ is atoroidal. And $\alpha, \beta \in \Out(F_n)$ are conjugate to
					each other if and only if $F_n \rtimes_{\alpha}\ZZ$ is isomorphic to $F_n
					\rtimes_{\beta}\ZZ$ with an automorphism mapping $F_n$ to $F_n$, and $t$ to an element
					of the form $wt^1$ (\ie with an stable and positive automorphism in our language, see
					the proof of~\cref{cor:isomorphic deranged extensions}). Then, Dahmani uses a variation
					of the celebrated solution to the general isomorphism problem for hyperbolic groups
					(see~\cite{sela_isomorphism_1995,dahmani_isomorphism_2011}) to determine whether $F_n
					\rtimes_{\alpha}\ZZ$ and $F_n \rtimes_{\beta}\ZZ$ are isomorphic through an isomorphism
					of the above type, and so deciding whether the atoroidal automorphisms $\alpha$ and $\beta$ are conjugated to each other in $\Out(F_n)$. Our approach has been the opposite: we have used the conjugacy problem in $\Out(F_n)$ (more precisely, those particular cases where it is known to be decidable) to solve the isomorphism problem in the corresponding
					families of unique $\ZZ$-extensions.

					\bigskip

					\subsection*{Acknowledgements}

					We thank Ha Lam for interesting conversations during an early stage of the development of this paper. We also thank Maurice Chiodo for suggesting a nice trick used in \cref{thm:fg-by-Z is undecidable};
					and Conchita Martínez for pointing out the counterexample used in~\cref{prop: fp-by-Z < [fg-by-Z]fp}.

					\medskip

					The second named author thanks the support of
					\emph{Universitat Polit\`{e}cnica de Catalunya} through the PhD grant number 81--727.
					The third named author was partially supported by a PSC-CUNY grant from the CUNY research foundation,  the City Tech foundation, the ONR (Office of Naval Research) grant N000141210758 and N00014-15-1-2164, and AAAS (American Association for The Advancement of Science) grant 71527-0001.
					Finally, the second and fourth named authors acknowledge partial support from the Spanish Government through grant number MTM2014-54896-P.

					\bigskip

					\renewcommand{\bibfont}{\normalfont\small}
					\bibitemsep = 1ex \printbibliography


\end{document}